\theoremstyle{theorem}
\newtheorem{theorem}{Theorem}[section]
\newtheorem*{theorem*}{Theorem}
\newtheorem{lemma}[theorem]{Lemma}
\newtheorem{proposition}[theorem]{Proposition}
\newtheorem{corollary}[theorem]{Corollary}
\theoremstyle{definition}
\newtheorem{definition}[theorem]{Definition}
\newtheorem*{definition*}{Definition}
\newtheorem{construction}[theorem]{Construction}
\newtheorem{terminology}[theorem]{Terminology}
\newtheorem{observation}[theorem]{Observation}
\newtheorem{example}[theorem]{Example}
\newtheorem{remark}[theorem]{Remark}
\newtheorem{convention}[theorem]{Convention}
\newtheorem{warning}[theorem]{Warning}
\numberwithin{equation}{section}
\newcommand{\N}{\mathbb{N}}
\DeclareFontFamily{OT1}{pzc}{}
\DeclareFontShape{OT1}{pzc}{m}{it}{<->s*[1.19]pzcmi7t}{}
\DeclareMathAlphabet{\mathpzc}{OT1}{pzc}{m}{it}
\newcommand{\catfont}[1]{\mathpzc{#1}}
\newcommand{\class}{\mathrm{d}}
\newcommand{\id}{\mathrm{id}}
\newcommand{\op}{^\mathrm{op}}
\newcommand{\psunit}{\upsilon}
\newcommand{\psmult}{\mu}
\newcommand{\profto}{\stackMath\mathrel{\stackinset{c}{-0.25ex}{c}{0.25ex}{\shortmid}{\to}}}
\newcommand{\twoto}{\Rightarrow}
\newcommand{\looseto}{\rightsquigarrow}
\newcommand{\markto}{\nrightarrow}
\newcommand{\marklooseto}{\not\looseto}
\newcommand{\discto}{\twoheadrightarrow}
\newcommand{\discdown}{{\rotatebox[origin=c]{-90}{$\discto$}}}
\newcommand{\loosedown}{{\rotatebox[origin=c]{-90}{$\looseto$}}}
\newcommand{\slice}[1]{\mathbin{\!\rotatebox[origin=c]{-90}{\({#1}\)}}}
\newcommand{\tdoarrow}[1]{#1^{\discdown}}
\newcommand{\stdoarrow}[1]{#1^{\discdown_{\!s}}}
\newcommand{\loosearrow}[1]{#1^\loosedown}
\newcommand{\lax}{\mathrm{lx}}
\newcommand{\colax}{\mathrm{cx}}
\newcommand{\pseudo}{\mathrm{ps}}
\newcommand{\strict}{\mathrm{st}}
\newcommand{\F}{\mathscr{F}}
\newcommand{\K}{\mathcal{K}}
\newcommand{\E}{\mathcal{E}}
\newcommand{\Alg}{\catfont{Alg}}
\newcommand{\PsAlg}{\catfont{Alg}^\pseudo}
\newcommand{\LaxAlg}{\catfont{Alg}^\lax}
\newcommand{\Cat}{\catfont{Cat}}
\newcommand{\Set}{\mathrm{Set}}
\newcommand{\Finset}{\mathrm{Fin}}
\newcommand{\lxel}{\mathcal{E}\ell_{\lax}}
\newcommand{\DiscTwoFib}{\catfont{Disc}2\catfont{Fib}}
\newcommand{\twocat}{\catfont{2Cat}}
\newcommand{\dblAlg}{{\mathbb{A}\mathrm{lg}}}
\newcommand{\Mod}{{\mathbb{M}\mathrm{od}}}
\newcommand{\sMod}{{\mathrm{s}\Mod}}
\crefname{assumption}{Assumption}{Assumption}
\crefname{diagram}{Diagram}{Diagrams}
\crefname{construction}{Construction}{Constructions}
\crefname{notation}{Notation}{Notations}
\begin{document}
\title[Classifying strict discrete opfibrations with lax morphisms]{Classifying strict discrete opfibrations\linebreak{}with lax morphisms\linebreak{
\large
 {\it In honour of Robert Par\'e on the occasion of his 80th birthday.}}}

\author[1,2]{\fnm{Matteo} \sur{Capucci}}\email{matteo.capucci@gmail.com}
\author[3]{\fnm{David Jaz} \sur{Myers}}\email{davidjaz@topos.institute}

\affil[1]{\orgdiv{Department of Computer and Information Sciences}, \orgname{University of Strathclyde}, \orgaddress{\city{Glasgow}, \state{Scotland}, \country{United Kingdom}}}

\affil[2]{\orgname{Independent Researcher}, \orgaddress{\city{Modena}, \country{Italy}}}

\affil[3]{\orgname{Topos Institute UK}, \orgaddress{\city{Oxford}, \state{England}, \country{United Kingdom}}}

\abstract{
	We study discrete opfibration classifiers in enhanced 2-categories and show how, under suitable hypotheses, such classifiers can be endowed with the structure of a (lax or pseudo-)$T$-algebra and classify strict discrete opfibrations in 2-categories of (lax or pseudo-)$T$-algebras and lax morphisms.
	This leads to a notion of discrete opfibration classifier in the enhanced setting, in which `small' (e.g.~strict) discrete opfibrations are classified by `loose' (e.g.~lax) maps.

	We identify conditions on an enhanced 2-monad $T$ and on a discrete opfibration classifier ensuring that this lifting to algebras is possible.
	These conditions hold in a broad range of examples, including double categories, monoidal and symmetric monoidal categories, orthogonal factorization systems, and, more generally, structures encoded by opfamilial 2-monads.
	In particular, this recovers and explains the role of $\mathbb{S}\mathrm{pan}(\Set)$ as a classifier for strict double discrete opfibrations via lax double functors.

	We also characterize when representable copresheaves are pseudo rather than merely lax in terms of `cartesianness at the representing object', for an abstract notion of cartesianness we introduce.
}

\pacs[MSC Classification]{18N15, 18D70}

\keywords{2-topos, 2-algebras, discrete opfibrations, cartesianness}

\maketitle

\section{Introduction}
\label[section]{sec:intro}

Bob Paré's work on double category theory has been hugely influential: together with Grandis, he kick-started the entire field, moving it from Ehresmann's curiosity to a central and powerful discipline.
In \emph{Yoneda theory for double categories}, \citeauthor{pareYonedaTheoryDouble2011} argues that \(\mathbb {S}\mathrm{pan}(\Set)\) is the correct codomain for double presheaves as the natural recipient of the representables, noting these are in general only \emph{lax} double functors.

Lambert would later bring further evidence of the adequacy of such definitions by showing that presheaves over a double category \(\mathbb {A}\) are in fact in correspondence with `double discrete opfibrations' over \(\mathbb {A}\): this is (the dual of) Theorem 2.27 in \cite{lambertDiscreteDoubleFibrations2021}, which, formally, exhibits an equivalence of categories
\begin{equation}
\label{mc-003A}
	\catfont{DiscOpfib}(\mathbb {A}) \simeq \catfont{Dbl}_{\lax}\!(\mathbb {A},\, \mathbb {S}\mathrm{pan}(\Set))
\end{equation}
where on the left we have the category of discrete double opfibrations in the sense of Lambert (Definition 2.12 in \cite{lambertDiscreteDoubleFibrations2021}) and on the right the hom-category of lax functors and tight natural transformations of (weak) double categories.
We note that Lambert's double discrete opfibrations are \emph{strict} double functors; they are, in fact, \emph{strict} discrete opfibrations in the 2-category of double categories, lax functors, and tight transformations.

Paré \cite[p.437]{pareYonedaTheoryDouble2011} already notices that such a result means \(\mathbb {S}\mathrm{pan}(\Set)\) is almost a \emph{discrete opfibration classifier}, or \emph{2-classifiers}, in the sense of \cite{weberYonedaStructures2toposes2007}, in the 2-category of double categories and lax functors.

Discrete opfibrations and their 2-classifiers are categorifications of subobjects and their classifiers: where a subobject classifier is a subobject which is universal in the sense that every other subobject appears as pullback of it, a discrete opfibration classifier is a discrete opfibration $u : \Omega_{\bullet} \to \Omega$ which is universal in the same sense.
The ur-example of a discrete opfibration classifier is the projection $u : \Set_{\bullet} \to \Set$ from the category of pointed sets to the category of sets, forgetting the chosen element.%
\footnote{
	Indeed, subobjects have fibers with category level $-1$ (propositional) while discrete opfibrations have fibers with category level $0$ (discrete).
	Similarly, discrete opfibrations are the decategorification of indexed categories, by the notorious Grothendieck construction.
}

In this paper we address the following question: \textbf{why does \(\mathbb {S}\mathrm{pan}(\Set)\) \emph{specifically} act like a 2-classifier?}
Our answer stems from a few elementary observations.
As Paré already mentions \cite[437]{pareYonedaTheoryDouble2011}, double categories (of a particular unbiased flavor) are pseudoalgebras for the free category monad \(T\) on the 2-category \(\mathrm{Graph}(\Cat)= \left[\left\{\begin{tikzcd}[sep=small,cramped]
		e & v
		\arrow["t"', shift right, from=1-1, to=1-2]
		\arrow["s", shift left, from=1-1, to=1-2]
	\end{tikzcd}\right\}, \Cat\right]\) of graphs of categories.
Mesiti \cite{mesiti2classifiersDenseGenerators2025} gives a construction of discrete opfibration classifiers $\Omega$ in presheaf 2-categories; applied to graphs of categories, Mesiti's recipe gives:
\begin{equation}
	\Omega (e) = \left [ \left \{
		\begin{tikzcd}[cramped,sep=tiny]
			& {\id_e} \\
			s && t
			\arrow[from=1-2, to=2-1]
			\arrow[from=1-2, to=2-3]
		\end{tikzcd}
		\right \}, \Set
		\right ],
	\qquad
	\Omega (v) = \left [\left \{ \id_v \right \}, \Set \right ]
\end{equation}
with \(\Omega (s)\) and \(\Omega (t)\) given by projecting the feet of the spans. In other words, \(\Omega \) is the (large) graph of spans of sets, the underlying graph of the double category \(\mathbb{S}\mathrm{pan}(\Set)\).
Moreover, the universal discrete opfibration is the projection $u : \Omega_{\bullet} \to \Omega$ from spans of \emph{pointed} sets down to spans of sets.

This suspicious coincidence already suggests that some abstract nonsense is afoot, but the connection runs deeper. The double category structure on $\mathbb{S}\mathsf{pan}(\Set)$ may be deduced from the universal property of the discrete opfibration classifier $\Omega$.

The double category structure $\omega : T \Omega \to \Omega$ defining $\mathbb{S}\mathsf{pan}(\Set)$ is given by sending a sequence of spans
\begin{equation}\label{eqn:sequence.spans}
	\begin{tikzcd}[cramped, sep = small]
		& {S_1} && \cdots && {S_n} & \\
		{A_0} && {A_1} && {A_{n-1}} && {A_n}
		\arrow[from=1-2, to=2-1]
		\arrow[from=1-2, to=2-3]
		\arrow[from=1-4, to=2-3]
		\arrow[from=1-4, to=2-5]
		\arrow[from=1-6, to=2-5]
		\arrow[from=1-6, to=2-7]
	\end{tikzcd}
\end{equation}
to its limit, which is the set of elements that map to each other along the legs of the spans in the following manner:

\[\begin{tikzcd}[cramped, sep = small]
		& {s_1} && \cdots && {s_n} & \\
		{a_0} && {a_1} && {a_{n-1}} && {a_n}
		\arrow[maps to, from=1-2, to=2-1]
		\arrow[maps to, from=1-2, to=2-3]
		\arrow[maps to, from=1-4, to=2-3]
		\arrow[maps to, from=1-4, to=2-5]
		\arrow[maps to, from=1-6, to=2-5]
		\arrow[maps to, from=1-6, to=2-7]
	\end{tikzcd}\]

Notice that we may equivalently describe this set $\omega(S_1, \cdots, S_n)$ as the set of sequences of spans of \emph{pointed sets}
\[
	\begin{tikzcd}[cramped, sep = small]
		& {(S_1, s_1)} && \cdots && {(S_n, s_n)} & \\
		{(A_0, a_0)} && {(A_1, a_1)} && {(A_{n-1}, a_{n-1})} && {(A_n, a_n)}
		\arrow[from=1-2, to=2-1]
		\arrow[from=1-2, to=2-3]
		\arrow[from=1-4, to=2-3]
		\arrow[from=1-4, to=2-5]
		\arrow[from=1-6, to=2-5]
		\arrow[from=1-6, to=2-7]
	\end{tikzcd}
\]
whose underlying sequence of spans of sets is \cref{eqn:sequence.spans}. In other words, the double category structure $\omega : T\Omega \to \Omega$ on spans of sets is, in the 2-category of graphs of categories, the map classifying the discrete opfibration $Tu : T\Omega_{\bullet} \to T\Omega$ which projects out the underlying sequence of spans of sets of a sequence of spans of pointed sets.

This leads to a natural conjecture that the double category structure of \(\mathbb {S}\mathrm{pan}(\Set)\) arises naturally as but one of many instances of a general phenomenon whereby a 2-classifier \emph{lifts} to algebras of a 2-monad. In this work, we give conditions under which such a lift is possible, and find that the case of double categories is indeed typical.
In a wide variety of 2-algebraic theories, spans or something like them classify \emph{strict} discrete opfibrations via lax morphisms.
To give some examples (anticipated from \cref{sec:examples}), this variety includes double categories, orthogonal factorization systems, any structure induced by an opfamilial 2-monad (such as limit completions), and monoidal versions of all of the above.

Because our theorem concerns the interplay between \emph{strict}, \emph{pseudo}, and \emph{lax} algebra morphisms, we work with enhanced 2-categories (also $\F$-categories) as introduced by \cite{lackEnhanced2categoriesLimits2012}---the theory of enhanced 2-categories was developed for just this purpose.

We set our main result in \emph{plumbuses} (\cref{def:plumbus}), enhanced 2-categories $\K$ with a distinguished choice of \emph{tight discrete opfibrations} (called \emph{small}). Concretely, we express this structure as a \emph{split enhanced discrete 2-fibration} $\partial_0 : \stdoarrow{\K} \to \K$ from an enhanced 2-category of small discrete opfibrations and commuting squares.
For $\partial_0$ to be an enhanced discrete 2-fibration means small discrete opfibrations admit so-called `left-tight' pullbacks (\cref{def:left-tight-pbs}), and that we have a strictly functorial choice of these pullbacks.

A plumbus is \emph{representable} when it admits a \emph{universal} small discrete opfibration. Explicitly, this is a \emph{marked-lax terminal object} of the enhanced 2-category of small discrete opfibrations (\cref{def:enhanced.quasi}, though see \cref{lem:marked-lax-defn} for a concrete characterization). In \cref{thm:dotted.representability}, we show a universal small discrete opfibration $u : \Omega_\bullet \to \Omega$ determines an equivalence of the enhanced 2-category of small discrete opfibrations with the lax slice $\K\slice{\looseto}_{\lax}\Omega$. In fact, \cref{thm:dotted.representability} proves the equivalence between the representability of an enhanced discrete 2-fibration and the existence of a marked-lax terminal object in general; this may be of independent interest.

We then show that enhanced 2-categories of lax and pseudo-$T$-algebras of an enhanced 2-monad $T$ on a plumbus $\K$ are again plumbuses.
The class of small discrete opfibrations we consider therein are the \emph{$T$-strict} discrete opfibrations whose underlying map is small in $\K$.

We can finally ask: when $\K$ is representable, what conditions on $T$ makes $\LaxAlg_{\lax,t\pseudo}(T)$ (or $\PsAlg_{\lax,t\pseudo}(T)$) representable as well?
In \cref{mc-0003} we prove the following conditions are sufficient:

\begin{center}
	\begin{minipage}[c]{.65\textwidth}
		\begin{enumerate}[label=\Alph*.]
			\item $Tu$ is a small discrete opfibration,
			\item $T$ preserves pullbacks of $u$,
			\item $Tu$ is perfect (\cref{mc-002Z}), i.e. $\omega$ is tight,
			\item $i$ and $m$ are cartesian\footnotemark~at $u$,
		\end{enumerate}
	\end{minipage}%
	\begin{minipage}[c]{.33\textwidth}
		\begin{equation}
			\begin{tikzcd}[ampersand replacement=\&]
				{T\Omega_\bullet} \& {\Omega_\bullet} \\
				{T\Omega} \& \Omega
				\arrow["{\omega_\bullet}", from=1-1, to=1-2]
				\arrow["Tu"', two heads, from=1-1, to=2-1]
				\arrow["\lrcorner"{anchor=center, pos=0.125}, draw=none, from=1-1, to=2-2]
				\arrow["u", two heads, from=1-2, to=2-2]
				\arrow["\omega"', dashed, from=2-1, to=2-2]
			\end{tikzcd}
		\end{equation}
	\end{minipage}
\end{center}

In particular, enhanced 2-monads satisfying the first three conditions are called \textbf{opfibrantly cartesian} (\cref{def:opfib-cart}) and they admit lifts of 2-classifiers to $\LaxAlg_{\lax,t\pseudo}(T)$, while the last condition makes them \textbf{fully} opfibrantly cartesian and induces a lift to $\PsAlg_{\lax,t\pseudo}(T)$ as well.
As observed earlier, the algebra structure on $\Omega$ is induced by the map $\omega : T\Omega \to \Omega$ classifying $Tu : T\Omega_{\bullet} \to T\Omega$.

In fact, to close the circle, we can note that the free category monad \(T\) on \(\mathrm{Graph}(\Cat)\) is \emph{cartesian} in the sense of Definition 4.1.1 of \citep{leinsterHigherOperadsHigher2004} and, since it preserves arrow objects, it preserves discrete opfibrations too; therefore it is an instance of fully opfibrantly cartesian 2-monad.
\cref{mc-0003} therefore applies to double categories, showing that the special status of $\mathbb{S}\mathsf{pan}(\Set)$ follows from a general principle of 2-categorical algebra.

\subsubsection*{Outline}
The paper begins by recalling some vocabulary from enhanced 2-category theory in \cref{sec:enhanced-2-cats}, and establishing background concepts.
In \cref{sec:disc-opfibs} we present the first original results on tight discrete opfibrations in enhanced 2-categories of algebras, specifically:
\cref{sec:enhanced-discrete-2-fibs} is then a technical section in which we develop some 2-fibrational vocabulary, specifically we identify the structure on an enhanced 2-fibration that makes it `representable', meaning suitably equivalent to a slice 2-fibration---the main result there is \cref{thm:dotted.representability}.
The main usefulness of such machinery is to enable a very slick proof of the main theorem, but we first use it in \cref{sec:plumbuses} to give a version of Weber and Mesiti's notions of (good) discrete opfibration classifier in the enhanced setting: it is a discrete opfibration which represents a chosen enhanced 2-fibration of discrete opfibrations.
Then, in \cref{sec:lifting-theorem} we prove first an `abstract' version of the main theorem, about general representors of enhanced 2-fibrations (\cref{th:lift.of.enh.quasi-terminal.to.laxalg}) and then the `concrete' one (\cref{mc-0003}) about lifting 2-classifiers to enhanced 2-categories of lax and pseudo $T$-algebras.
In \cref{sec:cartesianness}, we pause to ponder the notion of \emph{$T$-cartesian \(T\)-morphisms} for an enhanced 2-monad \(T\), a notion that generalizes \emph{descent} conditions such as extensivity (see \cref{mc-001E}) or cartesian monoidal categories.
It is of significance in this work because (small) discrete opfibrations in a representable plumbus are $T$-cartesian if and only if they are classified by tight pseudo-$T$-morphisms.

We end the paper in \cref{sec:examples} with a smattering of examples.
Our main theorem is \emph{iterable}, in the sense that, except for the 2-monad, the kind of data going in it is the same as the data coming out of it.
We may therefore apply the main theorem again to any opfibrantly cartesian 2-monad on \(\LaxAlg_{\lax, t\pseudo}(T)\).
We use this in \cref{djm-00L3} to deduce that the cartesian monoidal structure on \(\mathbb {S}\mathrm{pan}(\Set)\) classifies strict monoidal, strict double discrete opfibrations via lax monoidal, lax double functors.

\subsubsection*{Related work}
Late during the preparation of this work, we became aware of \cite{koudenburgDoubledimensionalApproachFormal2022}, where Koudenburg proves a general lifting theorem for Yoneda structures in the setting of (augmented virtual) equipments.
The result likely subsumes ours, which should be an instance of his for certain (augmented virtual) equipments of tight two-sided discrete opfibrations in a suitable 2-category---we give a more detailed account of the relationship of the two in \cref{app:koudenburg}, once all the necessary terminology has been introduced.

However, this does not mean \emph{ibid.} makes the present work useless: ours is a more elementary, if less general, approach; and since we work 2-categorically rather than double-categorically, we provide an alternative point of view, closer to the way most `2-algebra' has been developed so far.

Finally, and somewhat aspirationally, we expect a connection with \emph{oriented category theory} \cite{gepner2025orientedcategorytheory}. Our theorem has an evidently \emph{oriented} flavour: we consider lax morphisms of lax algebras and even a lax classifier. Our arguments are fairly abstract, and it is reasonable to expect that they might extend to $\omega$-categories in a oriented way---lax all the way up. We'll leave the formulation and proof of this generalization up to an enterprising higher category theorist.

\subsection*{Acknowledgments}
\label[section]{mc-0044}

The authors wish to thank Fosco Loregian, Vincent Moreau, and the participants of CT2025 for helpful conversations and comments.
We are especially indebted to Seerp Roald Koudenburg for patiently helping us with \cref{app:koudenburg}.
We are also grateful to the anonymous reviewer for spotting some gaps in the original version of this paper, prompting a significant revamping of the main argument. We appreciate their careful and detailed comments.

\subsection*{Notation and conventions}
\label[section]{mc-003M}

In the following, \(\K\) will denote a 2-category, often enhanced (see below for a definition).
Therein, objects are usually denoted by uppercase Latin letters \(A,B,C,...\), 1-cells by lowercase Latin letters \(f,g,h,...\) and 2-cells by lowercase Greek letter \(\alpha ,\beta ,...\).
Notable exception are algebra maps which are denoted by the lowercase Greek equivalent of their carrier, for ease of tracking (so the algebra structure associated to \(A\) is usually denoted \(\alpha \)), as well as co/laxator associated to maps of algebras \(f\) which are denoted as \(\overline {f}\).

We use the convention of calling any object of a 2-category \emph{category}, any 1-cell \(a:X \to A\) a \emph{(generalized) object} \(a\) of \(A\), and any 2-cell \(\phi : a \Rightarrow b : X \to A\) a \emph{(generalized) morphism} \(\phi : a \to b\) of \(A\).

We assume the reader is familiar with basic 2-category theory.
In the rest of this section, we establish some notation and terminology, as well as introduce some specialized notions.
Most of the background required to read this work can be found in \cite{lack2CategoriesCompanion2010}. %
We recall just one result:

\begin{theorem}[{Pasting lemma for commas}]\label[theorem]{mc-000F}
	Given a diagram in a 2-category \(\K\) as below, where the right square is a comma, then the whole diagram is a comma if and only if the left square is a pullback:
	\begin{equation}
		\begin{tikzcd}
		\cdot  & \cdot  & \cdot  \\
		\cdot  & \cdot  & \cdot
		\arrow[from=1-1, to=1-2]
		\arrow[from=1-1, to=2-1]
		\arrow[from=1-2, to=1-3]
		\arrow[from=1-2, to=2-2]
		\arrow["\lrcorner "{anchor=center, pos=0.125}, draw=none, from=1-2, to=2-3]
		\arrow[shorten <=4pt, shorten >=4pt, Rightarrow, from=1-3, to=2-2]
		\arrow[from=1-3, to=2-3]
		\arrow[from=2-1, to=2-2]
		\arrow[from=2-2, to=2-3]
		\end{tikzcd}
	\end{equation}
\end{theorem}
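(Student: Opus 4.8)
The plan is to verify both implications directly from universal properties, tested against an arbitrary object \(W\in\K\). Relabel the diagram so that the top row reads \(A\xrightarrow{f}B\xrightarrow{g}C\), the bottom row reads \(D\xrightarrow{h}E\xrightarrow{k}F\), and the verticals are \(u\colon A\to D\), \(v\colon B\to E\), \(w\colon C\to F\); the left square commutes, \(vf=hu\), and the right square exhibits \(B\) as the comma object of \(w\) over \(k\) with structure 2-cell \(\gamma\colon wg\Rightarrow kv\). The outer square then carries the whiskered 2-cell \(\gamma f\colon wgf\Rightarrow khu\), and the assertion is that \((A,gf,u,\gamma f)\) has the comma universal property for \(w\) over \(kh\) if and only if \((A,f,u)\) has the pullback universal property for \(v\) and \(h\). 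At several points I will use the comma universal property of the right square in full: a 1-cell \(W\to B\) is determined by its two legs into \(C\) and \(E\) together with the whiskering of \(\gamma\), and likewise for 2-cells.

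\emph{The easy direction} (\(\Leftarrow\)). Assume the left square is a pullback. Given a comma cone \((x\colon W\to C,\ y\colon W\to D,\ \beta\colon wx\Rightarrow khy)\) for the outer square, apply the comma universal property of the right square to the triple \((x,\ hy,\ \beta\colon wx\Rightarrow k(hy))\) to obtain a unique \(y'\colon W\to B\) with \(gy'=x\), \(vy'=hy\), \(\gamma y'=\beta\). Since \(vy'=hy\), the pair \((y',y)\) is a cone for the left pullback, hence factors uniquely through some \(z\colon W\to A\) with \(fz=y'\) and \(uz=y\); then \(gfz=x\) and \(\gamma f z=\beta\), as required. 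Uniqueness of \(z\) and the analogous statement for 2-cells follow by the same pattern, using in addition the faithfulness clauses of the two universal properties. This direction is essentially bookkeeping.

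\emph{The harder direction} (\(\Rightarrow\)). Assume the outer square is a comma. To show the left square is a pullback, take a pullback cone \((x\colon W\to D,\ y\colon W\to B)\) with \(hx=vy\). The comma universal property of the right square identifies \(y\) with the triple \((gy,\ vy,\ \gamma y\colon w(gy)\Rightarrow k(vy))\); since \(vy=hx\), the triple \((gy,\ x,\ \gamma y\colon w(gy)\Rightarrow khx)\) is a comma cone for the outer square, so it factors uniquely through some \(z\colon W\to A\) with \(gfz=gy\), \(uz=x\), \(\gamma f z=\gamma y\). It remains to check \(fz=y\): the 1-cells \(fz,y\colon W\to B\) have equal legs into \(C\) (namely \(gfz=gy\)), equal legs into \(E\) (namely \(v(fz)=(vf)z=(hu)z=h(uz)=hx=vy\)), and equal whiskerings of \(\gamma\) (namely \(\gamma(fz)=(\gamma f)z=\gamma y\)), hence coincide by the comma universal property of the right square. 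Then \((fz,uz)=(y,x)\), so \(z\) is the desired factorization. Uniqueness of \(z\) and the 2-cell half of the pullback property are obtained by the same device: from the given left-pullback data build a comma cone (respectively, a 2-cell of comma cones) for the outer square, factor it, and pin down the \(B\)-component using faithfulness of the right comma.

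The main obstacle is exactly the step \(fz=y\) in the \((\Rightarrow)\) direction: it is here, and only here, that one needs the non-trivial half of the comma universal property — that the legs to \(C\) and \(E\) together with \(\gamma\) jointly reflect equality of 1-cells (and 2-cells) into \(B\) — rather than merely its existence clause. Everything else is middle-four interchange and routine transport of universal factorizations. Alternatively, one could apply the representable 2-functors \(\K(W,-)\), which preserve the comma and the pullback in question, to reduce the statement to the pasting lemma for comma categories in \(\Cat\); but that \(\Cat\)-level fact is proved by exactly the argument above, so little is saved.
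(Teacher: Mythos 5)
Your proof is correct. Note that the paper states this pasting lemma as a recalled standard result and gives no proof of its own, so there is nothing to compare against; your direct verification via the one- and two-dimensional universal properties (equivalently, the reduction along representables \(\K(W,-)\) to the corresponding fact in \(\Cat\)) is the standard argument, and you correctly identify the one non-trivial step, namely that \(fz=y\) follows from the uniqueness clause of the right-hand comma.
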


\section{Enhanced 2-categories}
\label[section]{sec:enhanced-2-cats}

\emph{Enhanced 2-categories}, introduced by \cite{lackEnhanced2categoriesLimits2012}, are 2-categories equipped with a distinguished wide sub-2-category whose 1-cells are called \emph{tight}.

We employ such 2-categories for much of the same reasons Lack and Shulman do, that is, to single out the well-behaved \emph{strict} morphisms of 2-algebras amongst (co)lax ones.

\begin{definition}
\label[definition]{mc-000S}
	An \textbf{enhanced 2-category}, or \textbf{\(\F\)-category}, is a 2-category \(\K \equiv : \K_l\) whose 1-cells are called \textbf{loose} and a wide and locally full subcategory \(J_{\K} : \K_t \hookrightarrow \K\) whose 1-cells are called \textbf{tight}.
\end{definition}

In other words, being tight is a mere property of 1-cells of an enhanced 2-category, and 2-cells between tight 1-cells are the same as the 2-cells between the same 1-cells considered as loose.
In particular, \(\K_t \hookrightarrow \K_l\) is identity-on-objects, faithful and locally fully faithful.
This is equivalent to the definition as a category enriched in the category \(\F\) of full embeddings of categories (i.e. `full subcategories'), as already noted by \cite{lackEnhanced2categoriesLimits2012}.

\begin{terminology}
\label[terminology]{term:chordate}
	An enhanced 2-category is called \textbf{chordate} when all its 1-cells are tight.
	Indeed, in what follows, when we say \emph{2-category} we always mean \emph{chordate enhanced 2-category}.
	Dually, an enhanced 2-category with only identity tight morphisms is called \textbf{inchordate}.
\end{terminology}

\begin{terminology}
\label[terminology]{term:repl-enhanced}
	When the subcategory of tights is \emph{replete}, meaning that $f$ tight and $f \cong g$ implies $g$ is tight to, we talk of \textbf{repletely enhanced} 2-category.
\end{terminology}

\begin{example}
\label[example]{mc-000Z}
	A trivial but useful example is taking \(\Cat\) with all its morphisms considered tight.
\end{example}

From the definition as enriched categories, we also get that \emph{enhanced 2-functors} are 2-functors that preserve tightness, while \emph{enhanced 2-natural transformations}, or \emph{tight natural transformations}, are 2-natural transformations whose components are all tight.

\begin{definition}[{Enhanced 2-monad}]\label[definition]{mc-0010}
	An \textbf{enhanced 2-monad} is an \(\F\)-monad, thus a 2-monad \((T, i, m)\) such that \(T\) preserves tightness and where \(i\) and \(m\) have tight components.
\end{definition}

Given an enhanced 2-monad \(T\), we can define various enhanced 2-categories of $T$-algebras, depending on the weakness of the morphisms we would like to have.
By taking suitable Eilenberg--Moore objects in the 2-category of enhanced 2-categories, one obtains enhanced 2-categories of $T$-algebras where the tights are tight (in the underlying 2-category) and strict (as \(T\)-morphisms), while the loose have the desired weakness.
See \cite{blackwellTwodimensionalMonadTheory1989,lack2CategoriesCompanion2010,lackEnhanced2categoriesLimits2012} for references on categorical 2-algebra and enhanced techniques therein.
Here we will work with more general \emph{lax} $T$-algebras, whose definition we recall now.

\begin{definition}[{Enhanced 2-category of lax \(T\)-algebras and lax morphisms}]\label[definition]{def:lax-alg}
	The \textbf{enhanced 2-category of \(T\)-algebras and lax \(T\)-morphism} \(\LaxAlg_{\lax, t\pseudo}(T)\) for an enhanced 2-monad \(T\) on the enhanced 2-category \(\K\) is the enhanced 2-category so comprised:
	\begin{enumerate}
		\item its objects are lax \(T\)-algebras, which are tight maps $\alpha:TA \to A$ equipped with unitor $\psunit$ and multiplicator $\psmult$ 2-cells:
		\begin{equation}
			\begin{tikzcd}[column sep=scriptsize]
				A && A \\
				& TA
				\arrow[""{name=0, anchor=center, inner sep=0}, equals, from=1-1, to=1-3]
				\arrow["i"', from=1-1, to=2-2]
				\arrow["\alpha"', from=2-2, to=1-3]
				\arrow["\psunit", between={0.3}{0.7}, Rightarrow, from=0, to=2-2]
			\end{tikzcd}
			\hspace*{10ex}
			\begin{tikzcd}[ampersand replacement=\&]
				{T^2A} \& TA \\
				TA \& A
				\arrow[""{name=0, anchor=center, inner sep=0}, "{{T\alpha}}", from=1-1, to=1-2]
				\arrow[""{name=0p, anchor=center, inner sep=0}, phantom, from=1-1, to=1-2, start anchor=center, end anchor=center]
				\arrow["m"', from=1-1, to=2-1]
				\arrow["\alpha", from=1-2, to=2-2]
				\arrow[""{name=1, anchor=center, inner sep=0}, "\alpha"', from=2-1, to=2-2]
				\arrow[""{name=1p, anchor=center, inner sep=0}, phantom, from=2-1, to=2-2, start anchor=center, end anchor=center]
				\arrow["\psmult"', shift left, between={0.2}{0.8}, Rightarrow, from=0p, to=1p]
			\end{tikzcd}
		\end{equation}
		satisfying the following equations:
		\begin{align}
			\begin{tikzcd}[ampersand replacement=\&, sep=scriptsize]
				\& A \\
				TA \&\& TA \& A \\
				\& {T^2A} \& TA
				\arrow["i"', from=1-2, to=2-3]
				\arrow[""{name=0, anchor=center, inner sep=0}, curve={height=-16pt}, equals, from=1-2, to=2-4]
				\arrow["\alpha", from=2-1, to=1-2]
				\arrow["i"', from=2-1, to=3-2]
				\arrow["\alpha", from=2-3, to=2-4]
				\arrow["\psmult", between={0.2}{0.8}, Rightarrow, from=2-3, to=3-3]
				\arrow["{T\alpha}", from=3-2, to=2-3]
				\arrow["m"', from=3-2, to=3-3]
				\arrow["\alpha"', from=3-3, to=2-4]
				\arrow["\psunit", shift right, between={0.2}{1}, Rightarrow, from=0, to=2-3]
			\end{tikzcd}
			\ &= \ %
			\begin{tikzcd}[ampersand replacement=\&, sep=scriptsize]
				\& A \\
				TA \&\&\& A \\
				\& {T^2A} \& TA
				\arrow[curve={height=-16pt}, equals, from=1-2, to=2-4]
				\arrow["\alpha", from=2-1, to=1-2]
				\arrow["i"', from=2-1, to=3-2]
				\arrow[curve={height=-16pt}, equals, from=2-1, to=3-3]
				\arrow["m"', from=3-2, to=3-3]
				\arrow["\alpha"', from=3-3, to=2-4]
			\end{tikzcd}
			\label{eq:lax-alg-triangle-1} \\[4ex]
			\begin{tikzcd}[ampersand replacement=\&, sep=scriptsize]
				\& {} \& TA \\
				TA \& {T^2A} \&\& A \\
				\&\& TA
				\arrow["\psunit", shift right, between={0.2}{0.9}, Rightarrow, from=1-2, to=2-2]
				\arrow["\alpha", from=1-3, to=2-4]
				\arrow["\psmult", between={0.3}{0.7}, Rightarrow, from=1-3, to=3-3]
				\arrow[curve={height=-16pt}, equals, from=2-1, to=1-3]
				\arrow["i"', from=2-1, to=2-2]
				\arrow[curve={height=16pt}, equals, from=2-1, to=3-3]
				\arrow["{T\alpha}"', from=2-2, to=1-3]
				\arrow["m"', from=2-2, to=3-3]
				\arrow["\alpha"', from=3-3, to=2-4]
			\end{tikzcd}
			\ &= \ %
			\begin{tikzcd}[ampersand replacement=\&, sep=scriptsize]
				\&\& TA \\
				TA \&\&\& A \\
				\&\& TA
				\arrow["\alpha", from=1-3, to=2-4]
				\arrow[curve={height=-16pt}, equals, from=2-1, to=1-3]
				\arrow[curve={height=16pt}, equals, from=2-1, to=3-3]
				\arrow["\alpha"', from=3-3, to=2-4]
			\end{tikzcd}
			\label{eq:lax-alg-triangle-2} \\[4ex]
			\begin{tikzcd}[ampersand replacement=\&, sep=scriptsize]
				\& {T^2A} \& TA \\
				{T^3A} \&\& TA \& A \\
				\& {T^2A} \& TA
				\arrow["T\alpha", from=1-2, to=1-3]
				\arrow["m"', from=1-2, to=2-3]
				\arrow["\psmult", between={0.1}{0.9}, Rightarrow, from=1-3, to=2-3]
				\arrow["\alpha", from=1-3, to=2-4]
				\arrow["{T^2a}", from=2-1, to=1-2]
				\arrow["m"', from=2-1, to=3-2]
				\arrow["\alpha", from=2-3, to=2-4]
				\arrow["\psmult", between={0.1}{0.9}, Rightarrow, from=2-3, to=3-3]
				\arrow["Ta", from=3-2, to=2-3]
				\arrow["m"', from=3-2, to=3-3]
				\arrow["\alpha"', from=3-3, to=2-4]
			\end{tikzcd}
			\ &= \ %
			\begin{tikzcd}[ampersand replacement=\&, sep=scriptsize]
				\& {T^2A} \& TA \\
				{T^3A} \& {T^2A} \&\& A \\
				\& {T^2A} \& TA
				\arrow["{T\alpha}", from=1-2, to=1-3]
				\arrow["{T\psmult}", shift right, between={0.2}{0.8}, Rightarrow, from=1-2, to=2-2]
				\arrow["\alpha", from=1-3, to=2-4]
				\arrow["\psmult", between={0.2}{0.8}, Rightarrow, from=1-3, to=3-3]
				\arrow["{{T^2a}}", from=2-1, to=1-2]
				\arrow["Tm", from=2-1, to=2-2]
				\arrow["m"', from=2-1, to=3-2]
				\arrow["Ta"', from=2-2, to=1-3]
				\arrow["m", from=2-2, to=3-3]
				\arrow["m"', from=3-2, to=3-3]
				\arrow["\alpha"', from=3-3, to=2-4]
			\end{tikzcd}
			\label{eq:lax-alg-pentagon}
		\end{align}
		\item its loose maps are lax \(T\)-morphisms whose underlying map is loose in \(\K\) and with structure 2-cell:
		\begin{equation}
			\begin{tikzcd}[ampersand replacement=\&]
				TA \& TB \\
				A \& B
				\arrow[""{name=0, anchor=center, inner sep=0}, "Tf", squiggly, from=1-1, to=1-2]
				\arrow["{\alpha }"', from=1-1, to=2-1]
				\arrow["{\beta }", from=1-2, to=2-2]
				\arrow[""{name=1, anchor=center, inner sep=0}, "f"', squiggly, from=2-1, to=2-2]
				\arrow["{\overline{f}}"', between={0.2}{0.8}, Rightarrow, from=0, to=1]
			\end{tikzcd}
		\end{equation}
		satisfying the following equations:
		\begin{align}
			\label{eq:lax-mor-unit}
			\begin{tikzcd}[ampersand replacement=\&,sep=scriptsize]
				\& A \&\& B \\
				TA \\[4ex]
				A \&\& B
				\arrow["f", squiggly, from=1-2, to=1-4]
				\arrow["i"', from=1-2, to=2-1]
				\arrow[""{name=0, anchor=center, inner sep=0}, curve={height=-18pt}, equals, from=1-2, to=3-1]
				\arrow[curve={height=-18pt}, equals, from=1-4, to=3-3]
				\arrow["\alpha"', from=2-1, to=3-1]
				\arrow["f"', squiggly, from=3-1, to=3-3]
				\arrow["\psunit"', between={0.2}{0.9}, Rightarrow, from=0, to=2-1]
			\end{tikzcd}
			\quad &= \quad
			\begin{tikzcd}[ampersand replacement=\&,sep=scriptsize]
				\& A \&\& B \\
				TA \&\& TB \\[4ex]
				A \&\& B
				\arrow["f", squiggly, from=1-2, to=1-4]
				\arrow["i"', from=1-2, to=2-1]
				\arrow["i"', from=1-4, to=2-3]
				\arrow[""{name=0, anchor=center, inner sep=0}, curve={height=-18pt}, equals, from=1-4, to=3-3]
				\arrow[""{name=1, anchor=center, inner sep=0}, "Tf", squiggly, from=2-1, to=2-3]
				\arrow["\alpha"', from=2-1, to=3-1]
				\arrow["\beta"', from=2-3, to=3-3]
				\arrow[""{name=2, anchor=center, inner sep=0}, "f"', squiggly, from=3-1, to=3-3]
				\arrow["\psunit"', between={0.2}{0.9}, Rightarrow, from=0, to=2-3]
				\arrow["{\overline{f}}", between={0.2}{0.8}, Rightarrow, from=1, to=2]
			\end{tikzcd}
			\\
			\label{eq:lax-mor-mult}
			\begin{tikzcd}[ampersand replacement=\&, sep=scriptsize, column sep=small]
				\& {T^2A} \&\& TA \\
				{T^2B} \\
				\& TA \&\& A \\
				TB \&\& B
				\arrow["{m}"{description}, from=1-2, to=1-4]
				\arrow[""{name=0, anchor=center, inner sep=0}, "{T^2f}"', squiggly, from=1-2, to=2-1]
				\arrow[""{name=0p, anchor=center, inner sep=0}, phantom, from=1-2, to=2-1, start anchor=center, end anchor=center]
				\arrow[""{name=1, anchor=center, inner sep=0}, "{T\alpha}"{description}, from=1-2, to=3-2]
				\arrow[""{name=1p, anchor=center, inner sep=0}, phantom, from=1-2, to=3-2, start anchor=center, end anchor=center]
				\arrow[""{name=2, anchor=center, inner sep=0}, "\alpha"{description}, from=1-4, to=3-4]
				\arrow[""{name=2p, anchor=center, inner sep=0}, phantom, from=1-4, to=3-4, start anchor=center, end anchor=center]
				\arrow["T\beta"', from=2-1, to=4-1]
				\arrow["\alpha"{description}, from=3-2, to=3-4]
				\arrow[""{name=3, anchor=center, inner sep=0}, "Tf"{description}, squiggly, from=3-2, to=4-1]
				\arrow[""{name=3p, anchor=center, inner sep=0}, phantom, from=3-2, to=4-1, start anchor=center, end anchor=center]
				\arrow[""{name=4, anchor=center, inner sep=0}, "f", squiggly, from=3-4, to=4-3]
				\arrow["\beta"{description}, from=4-1, to=4-3]
				\arrow["\psmult", between={0.2}{0.8}, Rightarrow, from=1p, to=2p]
				\arrow["{T\overline{f}}"', between={0.2}{0.8}, Rightarrow, from=0p, to=3p]
				\arrow["{\overline{f}}"', between={0.2}{0.8}, Rightarrow, from=3, to=4]
			\end{tikzcd}
			\quad &= \quad
			\begin{tikzcd}[ampersand replacement=\&, sep=scriptsize, column sep=small]
				\& {T^2A} \&\& TA \\
				{T^2B} \&\& TB \\
				\&\&\& A \\
				TB \&\& B
				\arrow["m"{description}, from=1-2, to=1-4]
				\arrow["{T^2f}"', squiggly, from=1-2, to=2-1]
				\arrow[""{name=0, anchor=center, inner sep=0}, "Tf"', squiggly, from=1-4, to=2-3]
				\arrow[""{name=0p, anchor=center, inner sep=0}, phantom, from=1-4, to=2-3, start anchor=center, end anchor=center]
				\arrow["\alpha"{description}, from=1-4, to=3-4]
				\arrow["m"{description}, from=2-1, to=2-3]
				\arrow[""{name=1, anchor=center, inner sep=0}, "{T\beta}"', from=2-1, to=4-1]
				\arrow[""{name=1p, anchor=center, inner sep=0}, phantom, from=2-1, to=4-1, start anchor=center, end anchor=center]
				\arrow[""{name=2, anchor=center, inner sep=0}, "\beta"{description}, from=2-3, to=4-3]
				\arrow[""{name=2p, anchor=center, inner sep=0}, phantom, from=2-3, to=4-3, start anchor=center, end anchor=center]
				\arrow[""{name=3, anchor=center, inner sep=0}, "f", squiggly, from=3-4, to=4-3]
				\arrow[""{name=3p, anchor=center, inner sep=0}, phantom, from=3-4, to=4-3, start anchor=center, end anchor=center]
				\arrow["\beta"{description}, from=4-1, to=4-3]
				\arrow["{\overline{f}}"', between={0.2}{0.8}, Rightarrow, from=0p, to=3p]
				\arrow["\psmult", between={0.2}{0.8}, Rightarrow, from=1p, to=2p]
			\end{tikzcd}
		\end{align}
		\item its tight maps are pseudo-\(T\)-morphisms whose underlying map is tight in \(\K\), where pseudo means the laxator is invertible:
		\begin{equation}
			\begin{tikzcd}[ampersand replacement=\&]
				TA \& TB \\
				A \& B
				\arrow[""{name=0, anchor=center, inner sep=0}, "Tf", squiggly, from=1-1, to=1-2]
				\arrow["{\alpha }"', from=1-1, to=2-1]
				\arrow["{\beta }", from=1-2, to=2-2]
				\arrow[""{name=1, anchor=center, inner sep=0}, "f"', squiggly, from=2-1, to=2-2]
				\arrow["{\overline{f}}"', "\wr", between={0.2}{0.8}, Rightarrow, from=0, to=1]
			\end{tikzcd}
		\end{equation}
		\item its 2-morphisms are \(T\)-2-morphisms, i.e. 2-cells satisfying
		\begin{equation}
			\begin{tikzcd}[ampersand replacement=\&]
				TA \&\& \\
				\&\& TB \\
				A \\
				\&\& B
				\arrow[""{name=1, anchor=center, inner sep=0}, "Tf"{pos=0.7}, curve={height=-18pt}, squiggly, from=1-1, to=2-3]
				\arrow[""{name=3, anchor=center, inner sep=0}, "f"{pos=0.7}, curve={height=-18pt}, squiggly, from=3-1, to=4-3]
				\arrow["{\overline{f}}"{pos=0.35}, between={0.2}{0.8}, Rightarrow, from=1, to=3]
				\arrow[""{name=0, anchor=center, inner sep=0}, "{Tf'}"'{pos=0.3}, curve={height=18pt}, squiggly, from=1-1, to=2-3]
				\arrow["\alpha"', from=1-1, to=3-1]
				\arrow["\beta", from=2-3, to=4-3]
				\arrow[""{name=2, anchor=center, inner sep=0}, "{f'}"'{pos=0.3}, curve={height=18pt}, squiggly, from=3-1, to=4-3]
				\arrow["{T\varphi}"', between={0.2}{0.8}, Rightarrow, from=1, to=0]
				\arrow["{\overline{f'}}"'{pos=0.65}, between={0.2}{0.8}, Rightarrow, from=0, to=2]
				\arrow["\varphi", between={0.2}{0.8}, Rightarrow, from=3, to=2]
			\end{tikzcd}
		\end{equation}
	\end{enumerate}
	Similarly, we define
	\begin{itemize}
		\item $\PsAlg_{\lax, t\pseudo}(T)$, the full subcategory of $\LaxAlg_{\lax, t\pseudo}(T)$ spanned by algebras with invertible unitor and multiplicator,
		\item $\Alg_{\lax, t\pseudo}(T)$, which is definitely likewise but restricting further to algebras with identity unitor and multiplicator.
	\end{itemize}
\end{definition}

We now define the three kinds of enhanced 2-limits we will use in this paper, chiefly featuring in the definition of \emph{plumbus} (\cref{def:plumbus}), which is the setting in which \cref{sec:lifting-theorem} takes place.

We start from a notion of pullback of tight maps along loose maps, which moreover marks the left projection as tight (this is how the \(\F\)-enrichment intervenes in the definition of enhanced 2-limit, see again \cite{lackEnhanced2categoriesLimits2012} for more details on the theory of enhanced 2-limits).

\begin{definition}[{Left-tight pullback}]\label[definition]{def:left-tight-pbs}
	A \textbf{left-tight pullback} is an enhanced 2-limit of shape
	\begin{equation}
	\label[diagram]{mc-002M}
		V^\ell  = \left \{
		\begin{tikzcd}
		& b \\
		a & c
		\arrow[from=1-2, to=2-2]
		\arrow[squiggly, from=2-1, to=2-2]
		\end{tikzcd}
		\right \}
	\end{equation}
	and weighted as defined below, i.e. where \(\Phi _l\) is the standard (conical) weight for a pullback,  \(\Phi _t\) marks the left projection as tight, and where \(\varphi \) is the unique natural transformation of its type.

	\begin{equation}
		\begin{tikzcd}[row sep=small]
			{V^\ell _l} \\
			& {\Cat} \\
			{V^\ell _t}
			\arrow[""{name=0, anchor=center, inner sep=0}, "{\Phi _l}", curve={height=-6pt}, from=1-1, to=2-2]
			\arrow["{J_{V^\ell }}"', from=1-1, to=3-1]
			\arrow[""{name=1, anchor=center, inner sep=0}, "{\Phi _t}"', curve={height=6pt}, from=3-1, to=2-2]
			\arrow["\varphi "', shift right=4, shorten <=5pt, shorten >=5pt, Rightarrow, from=0, to=1]
			\end{tikzcd}
			\qquad
			\begin{tikzcd}
			& {\mathrm{0}} \\
			{\mathrm{1}} & {\mathrm{0}} && {\mathrm{1}} \\
			&& {\mathrm{1}} & {\mathrm{1}}
			\arrow[equals, from=1-2, to=2-2]
			\arrow["{\varphi _a}"{description}, dotted, from=2-1, to=3-3]
			\arrow["{\varphi _b}"{description}, dotted, from=1-2, to=2-4]
			\arrow["{\varphi _c}"{description}, dotted, from=2-2, to=3-4]
			\arrow[equals, from=2-4, to=3-4]
			\arrow[equals, from=3-3, to=3-4]
		\end{tikzcd}
	\end{equation}
	Thus a diagram of shape \(V^\ell \) in an enhanced 2-category \(\K\) is a cospan whose right leg is tight; and its \(\Phi \)-weighted limit is given by a pullback square whose left leg is tight and \emph{detects tightness}, meaning when \(p_A h\) as below is tight, so is \(h\).
	\begin{equation}
		\begin{tikzcd}
			X & {f \times _C g} & B \\
			& A & C
			\arrow["h", squiggly, from=1-1, to=1-2]
			\arrow["{p_A h}"', curve={height=12pt}, from=1-1, to=2-2]
			\arrow["{p_B}", squiggly, from=1-2, to=1-3]
			\arrow["{p_A}"', from=1-2, to=2-2]
			\arrow["\lrcorner "{anchor=center, pos=0.125}, draw=none, from=1-2, to=2-3]
			\arrow["{f}", from=1-3, to=2-3]
			\arrow["g"', squiggly, from=2-2, to=2-3]
		\end{tikzcd}
	\end{equation}
	Notice, in particular, that any left-tight pullback is also a pullback.
\end{definition}

Next, we have the following `almost tight' flavour of comma object:

\begin{definition}[{l-rigged comma}]\label[definition]{mc-002L}
	An \textbf{l-rigged comma} is an enhanced 2-limit of the same shape as in \cref{mc-002M} and weighted as defined below, i.e. where \(\Psi _l\) is the standard weight for a comma, \(\Psi _t\) marks the left projection as tight, and where \(\psi \) is the identity:
	\begin{equation}
		\begin{tikzcd}
			{V^\ell _l} \\
			& {\Cat} \\
			{V^\ell _t}
			\arrow[""{name=0, anchor=center, inner sep=0}, "{\Psi _l}", curve={height=-6pt}, from=1-1, to=2-2]
			\arrow["{J_{V^\ell }}"', from=1-1, to=3-1]
			\arrow[""{name=1, anchor=center, inner sep=0}, "{\Psi _t}"', curve={height=6pt}, from=3-1, to=2-2]
			\arrow["\psi "', shift right=5, between={0.2}{0.8}, equals, from=0, to=1]
		\end{tikzcd}
		\qquad
		\begin{tikzcd}
			& {\mathrm{1}} \\
			{\mathrm{1}} & {\mathrm{2}} && {\mathrm{1}} \\
			&& {\mathrm{1}} & {\mathrm{2}}
			\arrow["{d_0}", from=1-2, to=2-2]
			\arrow[equals, dotted, from=1-2, to=2-4]
			\arrow["{d_1}", from=2-1, to=2-2]
			\arrow[equals, dotted, from=2-1, to=3-3]
			\arrow[equals, dotted, from=2-2, to=3-4]
			\arrow["{d_0}", from=2-4, to=3-4]
			\arrow["{d_1}"', from=3-3, to=3-4]
		\end{tikzcd}
	\end{equation}
	Thus a diagram of shape \(V^\ell \) in an enhanced 2-category \(\K\) is a cospan whose right leg is tight; and its \(\Psi \)-weighted limit is given by a comma square whose legs are tight and detect tightness, meaning that when \(\partial_1 h\) and \(\partial_0 h\) as below are tight, so is \(h\).
	\begin{equation}
		\begin{tikzcd}
			X \\
			& {f/g} & B \\
			& A & C
			\arrow["h", squiggly, from=1-1, to=2-2]
			\arrow["{\partial_0 h}", curve={height=-12pt}, from=1-1, to=2-3]
			\arrow["{\partial_1 h}"', curve={height=12pt}, from=1-1, to=3-2]
			\arrow["{\partial_0}", from=2-2, to=2-3]
			\arrow["{\partial_1}"', from=2-2, to=3-2]
			\arrow["\lrcorner "{anchor=center, pos=0.125}, draw=none, from=2-2, to=3-3]
			\arrow[between={0.2}{0.8}, Rightarrow, from=2-3, to=3-2]
			\arrow["f", from=2-3, to=3-3]
			\arrow["g"', squiggly, from=3-2, to=3-3]
		\end{tikzcd}
	\end{equation}
\end{definition}

The epithet `\emph{l-rigged}' comes from \cite{lackEnhanced2categoriesLimits2012}---dually, an \emph{r-rigged} comma is one where the tightness assumption on the legs of \cref{mc-002M} is swapped.

Finally, recall that an enhanced 2-limit is \emph{tight} just when it is weighted `in a chordate way', i.e. its shape is chordate and its weight has equal tight and loose parts---see §3.5.1 in \cite{lackEnhanced2categoriesLimits2012}.
Thus:

\begin{definition}[{Tight terminal object}]\label[definition]{mc-000U}
	A \textbf{tight terminal object} in an enhanced 2-category \(\K\) is a terminal object \(1\) in \(\K_l\) such that the unique maps \(!:X \to 1\) are tight.
\end{definition}

\section{Discrete opfibrations of $T$-algebras}
\label[section]{sec:disc-opfibs}

In this section, we establish some basic facts about discrete opfibrations in enhanced 2-categories of $T$-algebras.
We start by recalling their definition, due to \cite{arkorEnhanced2categoricalStructures2024}.

\begin{definition}[{(Tight) discrete opfibration}]\label[definition]{def:opfib}
	A \textbf{(tight) discrete opfibration} in a(n enhanced) 2-category \(\K\) is a (tight) map \(p:E \to  B\) that admits unique (\emph{cocartesian}) lifts:
	\begin{equation}
	\label{eq:lifting-problem}
		\begin{tikzcd}[sep=scriptsize]
			X & {} & E \\
			\\
			& {} & B
			\arrow["e", squiggly, from=1-1, to=1-3]
			\arrow["b"', curve={height=18pt}, squiggly, from=1-1, to=3-3]
			\arrow["\varphi "'{pos=0.35}, shift left=3, shorten <=4pt, shorten >=18pt, Rightarrow, from=1-2, to=3-2]
			\arrow["p", two heads, from=1-3, to=3-3]
		\end{tikzcd}
		\qquad = \quad
		\begin{tikzcd}[sep=scriptsize]
			X &[1ex]&[1ex] E \\
			\\
			&& B
			\arrow[squiggly, ""{name=0, anchor=center, inner sep=0}, "e", from=1-1, to=1-3]
			\arrow[squiggly, ""{name=1, anchor=center, inner sep=0}, "{\varphi_* e}"', curve={height=18pt}, from=1-1, to=1-3]
			\arrow[squiggly, "b"', curve={height=12pt}, from=1-1, to=3-3]
			\arrow["p", two heads, from=1-3, to=3-3]
			\arrow["{\exists  !}", shorten <=2pt, shorten >=2pt, Rightarrow, from=0, to=1]
		\end{tikzcd}
	\end{equation}
\end{definition}

The chief example of discrete opfibration is given by projections out of a comma:

\begin{definition}[{Representable discrete opfibration}]\label[definition]{mc-001T}
	A (tight) discrete opfibration is \textbf{representable} if it is equivalent to the projection out of a(n r-rigged) comma object, as dashed below:
	\begin{equation}
		\begin{tikzcd}
			{b/B} & X \\
			B & B
			\arrow[from=1-1, to=1-2]
			\arrow["{\partial_1}"', dashed, two heads, from=1-1, to=2-1]
			\arrow["\lrcorner "{anchor=center, pos=0.125}, draw=none, from=1-1, to=2-2]
			\arrow[shorten <=4pt, shorten >=4pt, Rightarrow, from=1-2, to=2-1]
			\arrow["b", squiggly, from=1-2, to=2-2]
			\arrow[equals, from=2-1, to=2-2]
		\end{tikzcd}
	\end{equation}
	We say \(b/B \xrightarrow {\partial_1} B\) is \textbf{represented by the object \(b:X \to B\)}.
	When \(b=\id_B\), we get the \textbf{codomain opfibration} associated to \(B\).
\end{definition}

\begin{warning}
\label[warning]{warn:representability-opfib}
	Here we do not use the terminology `representable' in the sense of e.g. \cite{riehlElements2022}, where the term just means `being a discrete opfibration', since
	\cref{def:opfib} can be repackaged extrinsically to say that $p$ is a discrete opfibration if and only if, for each $X \in \K$, the induced representable functor below is, i.e. when it is so \emph{representably}:
	\begin{equation}
		\begin{tikzcd}[ampersand replacement=\&]
			{\K(X,E)} \& {\K(X,B)}
			\arrow["{p_*}", two heads, from=1-1, to=1-2]
		\end{tikzcd}
	\end{equation}
\end{warning}

Being concerned, as we are in this work, with enhanced 2-categories of $T$-algebras and lifting 2-classifiers there, it is important to characterize discrete opfibrations therein.

\begin{proposition}
\label[proposition]{prop:tdos-in-alg-lax}
	Let \((T,i,m)\) be an enhanced 2-monad on \(\K\), let \(p:(E,\eta ) \to (B,\beta )\) be a strict \(T\)-morphism.
	Suppose \(p\) is a tight discrete opfibration in $\K$, then it is so in \(\LaxAlg_{\lax, t\pseudo}(T)\) too.
\end{proposition}
\begin{proof}
\label[proof]{mc-0032}
	Consider a lifting problem for $p$ in \(\LaxAlg_{\lax, t\pseudo}(T)\):
	\begin{equation}
		\begin{tikzcd}
		&&& X &&& E \\
		\\
		&&&&&& B \\
		TX &&& TE \\
		\\
		&&& TB
		\arrow[""{name=0, anchor=center, inner sep=0}, "e", squiggly, from=1-4, to=1-7]
		\arrow[""{name=0p, anchor=center, inner sep=0}, phantom, from=1-4, to=1-7, start anchor=center, end anchor=center]
		\arrow[""{name=1, anchor=center, inner sep=0}, "b"{description, pos=0.3}, curve={height=24pt}, squiggly, from=1-4, to=3-7]
		\arrow[""{name=1p, anchor=center, inner sep=0, pos=0.55}, phantom, from=1-4, to=3-7, start anchor=center, end anchor=center, curve={height=24pt}]
		\arrow["p"{description}, two heads, from=1-7, to=3-7]
		\arrow[""{name=2, anchor=center, inner sep=0}, "\xi "{description}, from=4-1, to=1-4]
		\arrow[""{name=3, anchor=center, inner sep=0}, "Te", squiggly, from=4-1, to=4-4]
		\arrow[""{name=3p, anchor=center, inner sep=0}, phantom, from=4-1, to=4-4, start anchor=center, end anchor=center]
		\arrow[""{name=4, anchor=center, inner sep=0}, "Tb"{description, pos=0.3}, curve={height=24pt}, squiggly, from=4-1, to=6-4]
		\arrow[""{name=4p, anchor=center, inner sep=0, pos=0.55}, phantom, from=4-1, to=6-4, start anchor=center, end anchor=center, curve={height=24pt}]
		\arrow[""{name=6, anchor=center, inner sep=0}, "\beta "{description}, from=6-4, to=3-7]
		\arrow["{\overline {b}}"{description}, shift right=5, curve={height=-18pt}, shorten <=21pt, shorten >=21pt, Rightarrow, from=6, to=2]
		\arrow["Tp"{description}, two heads, from=4-4, to=6-4]
		\arrow["\varphi "{description}, shorten <=7pt, shorten >=7pt, Rightarrow, from=0p, to=1p]
		\arrow["{T\varphi }"{description}, shorten <=7pt, shorten >=7pt, Rightarrow, from=3p, to=4p]
		\arrow[""{name=5, anchor=center, inner sep=0}, "\eta "{description}, from=4-4, to=1-7]
		\arrow["{\overline {e}}"{description}, shift left=2, shorten <=19pt, shorten >=19pt, Rightarrow, from=5, to=2]
		\end{tikzcd}
	\end{equation}
	We solve it in the back face, getting \(\lambda  : e \Rightarrow  \varphi_* e\):
	\begin{equation}
		\begin{tikzcd}
			&&& X &&& E \\
			\\
			&&&&&& B \\
			TX &&& TE \\
			\\
			&&& TB
			\arrow[""{name=0, anchor=center, inner sep=0}, "e", squiggly, from=1-4, to=1-7]
			\arrow[""{name=0p, anchor=center, inner sep=0}, phantom, from=1-4, to=1-7, start anchor=center, end anchor=center]
			\arrow[""{name=1, anchor=center, inner sep=0}, "{\varphi_* e}"', curve={height=24pt}, squiggly, from=1-4, to=1-7]
			\arrow[""{name=1p, anchor=center, inner sep=0}, phantom, from=1-4, to=1-7, start anchor=center, end anchor=center, curve={height=24pt}]
			\arrow["b"{description, pos=0.3}, curve={height=24pt}, squiggly, from=1-4, to=3-7]
			\arrow["p"{description}, two heads, from=1-7, to=3-7]
			\arrow[""{name=2, anchor=center, inner sep=0}, "\xi "{description}, from=4-1, to=1-4]
			\arrow[""{name=3, anchor=center, inner sep=0}, "Te", squiggly, from=4-1, to=4-4]
			\arrow[""{name=3p, anchor=center, inner sep=0}, phantom, from=4-1, to=4-4, start anchor=center, end anchor=center]
			\arrow[""{name=4, anchor=center, inner sep=0}, "{T(\varphi_*e)}"', curve={height=24pt}, squiggly, from=4-1, to=4-4]
			\arrow[""{name=4p, anchor=center, inner sep=0}, phantom, from=4-1, to=4-4, start anchor=center, end anchor=center, curve={height=24pt}]
			\arrow["Tb"{description, pos=0.3}, curve={height=24pt}, squiggly, from=4-1, to=6-4]
			\arrow[""{name=6, anchor=center, inner sep=0}, "\beta "{description}, from=6-4, to=3-7]
			\arrow["{\overline {b}}"{description}, shift right=5, curve={height=-18pt}, shorten <=21pt, shorten >=21pt, Rightarrow, from=6, to=2]
			\arrow["Tp"{description}, two heads, from=4-4, to=6-4]
			\arrow["{\exists!\,\lambda \,}"', shorten <=3pt, shorten >=3pt, Rightarrow, from=0p, to=1p]
			\arrow["{T\lambda \,}"', shorten <=3pt, shorten >=3pt, Rightarrow, from=3p, to=4p]
			\arrow[""{name=5, anchor=center, inner sep=0}, "\eta "{description, pos=0.445}, from=4-4, to=1-7]
			\arrow["{\overline {e}}"{description}, shift left=2, shorten <=19pt, shorten >=19pt, Rightarrow, from=5, to=2]
		\end{tikzcd}
	\end{equation}

	We must now equip \(\varphi_* e\) with the structure of a lax \(T\)-morphism and show that \(\lambda \) is a well defined \(T\)-2-morphism.
	We define \(\overline {\varphi_* e} : \eta  T(\varphi_* e) \Rightarrow  (\varphi_* e)\xi \) by appealing to the universal property of \(\eta  T\lambda \) as the lift of $\overline{b} : p(\eta T\varphi_* e) \beta Tb \to b \xi$:
	\begin{equation*}
		\begin{tikzcd}[ampersand replacement=\&]
			TX \&\& TE \& E \\
			\&\& TB \\
			\& X \&\& B
			\arrow["{T\varphi_*e}", squiggly, from=1-1, to=1-3]
			\arrow["Tb"{description}, squiggly, from=1-1, to=2-3]
			\arrow["\xi"', curve={height=12pt}, from=1-1, to=3-2]
			\arrow["\eta", from=1-3, to=1-4]
			\arrow["Tp", two heads, from=1-3, to=2-3]
			\arrow["p", two heads, from=1-4, to=3-4]
			\arrow["\beta"{description}, from=2-3, to=3-4]
			\arrow[""{name=0, anchor=center, inner sep=0}, "b"', squiggly, from=3-2, to=3-4]
			\arrow["{\overline{b}}"'{pos=0.3}, shift right=5, between={0}{0.7}, Rightarrow, from=2-3, to=0]
		\end{tikzcd}
		\qquad = \quad
		\begin{tikzcd}[ampersand replacement=\&]
			TX \&\& TE \& E \\
			\\
			\& X \&\& B
			\arrow[""{name=0, anchor=center, inner sep=0}, "{T\varphi_*e}", squiggly, from=1-1, to=1-3]
			\arrow["\xi"', curve={height=12pt}, from=1-1, to=3-2]
			\arrow["\eta", from=1-3, to=1-4]
			\arrow["p", two heads, from=1-4, to=3-4]
			\arrow[""{name=1, anchor=center, inner sep=0}, "{\varphi_* e}"', squiggly, from=3-2, to=1-4]
			\arrow["b"', squiggly, from=3-2, to=3-4]
			\arrow["{\exists!\, \overline{\varphi_*e}}"{pos=0.35}, shift right=5, between={0}{0.6}, Rightarrow, from=0, to=1]
		\end{tikzcd}
	\end{equation*}

	We now embark on a diagram chase to show that \(\lambda \) is a well-defined \(T\)-2-morphism, i.e. that \((\overline {\varphi_* e})\eta T\lambda = (\lambda \xi ) \overline {e}\).
	By uniqueness of lifts along \(p\) it suffices to show that these become equal after composing with \(p\):
	\begin{equation}
		\begin{aligned}
			p (\overline {\varphi_* e})\eta  T\lambda  &= \overline {b} (p \eta  T\lambda ) \\
			&= \overline {b} (\beta  Tp T\lambda ) \\
			&= \overline {b} (\beta  T\varphi ) \\
			&= (\varphi  \xi ) (p \overline {e}) \\
			&= (p \lambda ) \xi  (p \overline {e}) \\
			&= p((\lambda  \xi )\overline {e}).
		\end{aligned}
	\end{equation}
	To conclude, we observe $\overline{\varphi_* e}$ is a lawful laxator: the equations it must satisfy are proven by uniqueness of the lift.
	We do \eqref{eq:lax-mor-unit}, the another is analogous.
	We are thus look at the equation $\varphi_* e(\psunit_X) = (\overline{\varphi_*e} i)(\psunit_E \varphi_*e)$.
	This equation is over the corresponding one for $b$ (observe that $\psunit_E \varphi_* e$ is the lift of $\psunit_B b$ since $p$ is $T$-strict), which holds, and thus both sides are lifts of the same 2-cell, and therefore equal.
\end{proof}

Later in \cref{prop:lax-alg-plumbus} we show that these discrete opfibrations (that is, the `$T$-strict discrete opfibrations') in $\LaxAlg_{\lax, t\pseudo}(T)$ are the one amenable to classification according to our main result.

For \emph{strict} $T$-algebras, we can also tighten this last proposition:

\begin{proposition}
	\label[type]{prop:tdos-in-alg-strict}
	Let \((T,i,m)\) be an enhanced 2-monad on \(\K\), let \(p:(E,\eta ) \to (B,\beta )\) be a strict \(T\)-morphism.
	Then \(p\) is a tight discrete opfibration in $\K$ if and only if it is so in \(\Alg_{\lax, t\pseudo}(T)\) too.
\end{proposition}
\begin{proof}
	Let $U:\Alg_{\lax, t\pseudo}(T) \to \K$ be the forgetful enhanced 2-functor.
	Let $p$ be a discrete opfibration in $\Alg_{\lax, t\pseudo}(T)$, then we have the following diagram of adjunctions:
	\begin{equation}
		\begin{tikzcd}[ampersand replacement=\&]
			{\Alg_\lax(X,E)} \& {\Alg_\lax(X, B)} \\
			{\K(X,E)} \& {\K(X,B)}
			\arrow["{p_*}", two heads, from=1-1, to=1-2]
			\arrow[""{name=0, anchor=center, inner sep=0}, "{i^* U}", shift left, from=1-1, to=2-1]
			\arrow[""{name=0p, anchor=center, inner sep=0}, phantom, from=1-1, to=2-1, start anchor=center, end anchor=center, shift left]
			\arrow[""{name=1, anchor=center, inner sep=0}, "{i^* U}", shift left, from=1-2, to=2-2]
			\arrow[""{name=1p, anchor=center, inner sep=0}, phantom, from=1-2, to=2-2, start anchor=center, end anchor=center, shift left]
			\arrow[""{name=2, anchor=center, inner sep=0}, "{\eta_* T}", shift left, curve={height=-6pt}, from=2-1, to=1-1]
			\arrow[""{name=2p, anchor=center, inner sep=0}, phantom, from=2-1, to=1-1, start anchor=center, end anchor=center, shift left, curve={height=-6pt}]
			\arrow["{Up_*}", from=2-1, to=2-2]
			\arrow[""{name=3, anchor=center, inner sep=0}, "{\beta_* T}", shift left, curve={height=-6pt}, from=2-2, to=1-2]
			\arrow[""{name=3p, anchor=center, inner sep=0}, phantom, from=2-2, to=1-2, start anchor=center, end anchor=center, shift left, curve={height=-6pt}]
			\arrow["\dashv"{anchor=center}, draw=none, from=2p, to=0p]
			\arrow["\dashv"{anchor=center}, draw=none, from=3p, to=1p]
		\end{tikzcd}
	\end{equation}
	By strictness of $E$ and $B$, the units of both adjunctions are identities, and therefore exhibit $Up_*$ as a retract of $p_*$.
	By an easy argument, discrete opfibrations are closed under retracts, and by representability (Warning~\ref{warn:representability-opfib}), we can thus conclude $p$ is a discrete opfibration in $\K$.

	Conversely, it is easy to observe \cref{prop:pbs-of-tdos-in-alg-lax} above restricts to lax algebras.
\end{proof}

\begin{remark}
	The converse direction above is significant, since it shows that defining a tight discrete opfibration to be a strict \(T\)-morphism in \(\Alg_{\lax, t\pseudo}(T)\) is in fact a natural requirement.
	When \(T\) is sketchable\footnotemark, then this is also a consequence of Theorem 7.6 in \cite{arkorEnhanced2categoricalStructures2024}.
	\footnotetext{\(T\) is sketchable when \(\Alg_{\lax, t\pseudo}(T) \simeq \catfont{Mod}_{\lax}(\mathbb {T}, \K)\) for \(\mathbb {T}\) an enhanced 2-sketch (see \cite{arkorEnhanced2categoricalStructures2024} for a definition).}
\end{remark}

In conclusion, we record the following theorem, which is the first substantial result of the paper:

\begin{proposition}
\label[proposition]{prop:pbs-of-tdos-in-alg-lax}
	Let $\K$ be an enhanced 2-category and $(T,i,m)$ and enhanced 2-monad on it.
	Suppose $p:E \discto B$ is a tight discrete opfibration in \(\LaxAlg_{\lax, t\pseudo}(T)\) and that $f:A \to B$ is a lax $T$-morphism.
	Then the left-tight pullback of $p$ along $f$ exists in \(\LaxAlg_{\lax, t\pseudo}(T)\) if and only if it exists in $\K$.
\end{proposition}
\begin{proof}
	The `only if' direction is obvious, so we prove the other one.
	We start from the following diagram, summarizing the situation:
	\begin{equation}
		\begin{tikzcd}
			{Tf^*E} \\
			& {f^*TE} && TE \\
			&& {f^*E} && E \\
			& TA && TB \\
			&& A && B
			\arrow["{{\gamma_1}}"', from=1-1, to=2-2]
			\arrow["{{Tp_2}}", shift left, curve={height=-12pt}, squiggly, from=1-1, to=2-4]
			\arrow["{{Tf^*p}}"', curve={height=12pt}, from=1-1, to=4-2]
			\arrow[squiggly, from=2-2, to=2-4]
			\arrow["{(Tf)^*Tp}"{description}, from=2-2, to=4-2]
			\arrow["\lrcorner"{anchor=center, pos=0.125}, draw=none, from=2-2, to=4-4]
			\arrow["\eta", from=2-4, to=3-5]
			\arrow["Tp"{description, pos=0.7}, from=2-4, to=4-4]
			\arrow[squiggly, from=3-3, to=3-5]
			\arrow["{f^*p}"'{pos=0.7}, two heads, from=3-3, to=5-3]
			\arrow["\lrcorner"{anchor=center, pos=0.125}, draw=none, from=3-3, to=5-5]
			\arrow["p", two heads, from=3-5, to=5-5]
			\arrow["Tf"{pos=0.3}, squiggly, from=4-2, to=4-4]
			\arrow["\alpha"', from=4-2, to=5-3]
			\arrow["{{\overline{f}}}", between={0.2}{0.8}, Rightarrow, from=4-4, to=5-3]
			\arrow["\beta"{description}, from=4-4, to=5-5]
			\arrow["f"', squiggly, from=5-3, to=5-5]
		\end{tikzcd}
	\end{equation}
	The solid map $\gamma_1:Tf^*E \to f^*TE$ is the comparison map induced by the curved cone with apex $Tf^*E$ insisting on the cospan $f^*TE$ is a pullback of.
	Since $Tf^*p$ is tight by enhancedness of $T$ and the left projection of left-tight pullbacks detects tightness (\cref{def:left-tight-pbs}) and $f^*Tp \circ \gamma_1$, we conclude $\gamma_1$ is tight.

	Our first goal is to construct an map $f^*TE \to f^*E$, so that by composition with $\gamma_1$ we get an algebra structure on $f^*E$.
	We start by lifting $\overline{f}$ as follows, using the opfibrancy of $p$:
	\begin{equation}
	\label{eq:lift-of-laxator-f}
		\begin{tikzcd}[ampersand replacement=\&]
			{f^*TE} \&\& TE \\
			\&\&\& E \\
			TA \&\& TB \\
			\& A \&\& B
			\arrow[squiggly, from=1-1, to=1-3]
			\arrow[""{name=0, anchor=center, inner sep=0}, "l"{description, pos=0.4}, curve={height=12pt}, squiggly, from=1-1, to=2-4]
			\arrow["(Tf)^*p"', from=1-1, to=3-1]
			\arrow["\lrcorner"{anchor=center, pos=0.125}, draw=none, from=1-1, to=3-3]
			\arrow["\eta", from=1-3, to=2-4]
			\arrow["Tp"{description, pos=0.7}, from=1-3, to=3-3]
			\arrow["p", two heads, from=2-4, to=4-4]
			\arrow["Tf", squiggly, from=3-1, to=3-3]
			\arrow["\alpha"', from=3-1, to=4-2]
			\arrow["{\overline{f}}", between={0.2}{0.8}, Rightarrow, from=3-3, to=4-2]
			\arrow["\beta"{description}, from=3-3, to=4-4]
			\arrow["f"', squiggly, from=4-2, to=4-4]
			\arrow["{\exists!\ \lambda}"', between={0.2}{0.8}, Rightarrow, from=1-3, to=0]
		\end{tikzcd}
	\end{equation}
	This induces the desired map $\gamma_2$, which by the same considerations we made for $\gamma_1$, is tight:
	\begin{equation}
	\label{eq:gamma-two}
		\begin{tikzcd}
			{Tf^*E} \\
			& {f^*TE} && TE \\
			&& {f^*E} && E \\
			& TA && TB \\
			&& A && B
			\arrow["{{\gamma_1}}"', from=1-1, to=2-2]
			\arrow["{{Tp_2}}", shift left, curve={height=-12pt}, squiggly, from=1-1, to=2-4]
			\arrow["{{Tf^*p}}"', curve={height=12pt}, from=1-1, to=4-2]
			\arrow[squiggly, from=2-2, to=2-4]
			\arrow["{{\gamma_2}}"', from=2-2, to=3-3]
			\arrow[from=2-2, to=4-2]
			\arrow["\lrcorner"{anchor=center, pos=0.125}, draw=none, from=2-2, to=4-4]
			\arrow["\lambda"', between={0}{0.8}, Rightarrow, from=2-4, to=3-3]
			\arrow["\eta", from=2-4, to=3-5]
			\arrow["Tp"{description, pos=0.7}, from=2-4, to=4-4]
			\arrow[squiggly, from=3-3, to=3-5]
			\arrow["{f^*p}"'{pos=0.7}, two heads, from=3-3, to=5-3]
			\arrow["\lrcorner"{anchor=center, pos=0.125}, draw=none, from=3-3, to=5-5]
			\arrow["p", two heads, from=3-5, to=5-5]
			\arrow["Tf"{pos=0.3}, squiggly, from=4-2, to=4-4]
			\arrow["\alpha"', from=4-2, to=5-3]
			\arrow["{{\overline{f}}}", between={0.2}{0.8}, Rightarrow, from=4-4, to=5-3]
			\arrow["\beta"{description}, from=4-4, to=5-5]
			\arrow["f"', squiggly, from=5-3, to=5-5]
		\end{tikzcd}
	\end{equation}
	Then let $\eta^+ := \gamma_2\gamma_1$.
	Note that, with respect to this algebra structure, $\overline{p_2} = \lambda$ and that $f^*p$ is strict.

	The unitor and multiplicator for this algebra are obtained by lifting the ones of $A$, for example:
	\begin{equation}
		\begin{tikzcd}[ampersand replacement=\&]
			{f^*E} \&\&\& {f^*E} \\
			\& {Tf^*E} \\
			A \&\&\& A \\
			\& TA
			\arrow[""{name=0, anchor=center, inner sep=0}, shift left, equals, from=1-1, to=1-4]
			\arrow["i"', from=1-1, to=2-2]
			\arrow["{f^*p}"', two heads, from=1-1, to=3-1]
			\arrow["{f^*p}", two heads, from=1-4, to=3-4]
			\arrow["{\eta^+}"', from=2-2, to=1-4]
			\arrow[two heads, from=2-2, to=4-2]
			\arrow[""{name=1, anchor=center, inner sep=0}, equals, from=3-1, to=3-4]
			\arrow["i"', from=3-1, to=4-2]
			\arrow["\alpha"', from=4-2, to=3-4]
			\arrow["{\exists!\,\psunit_{f^*E}}"{pos=0.2}, shift right, between={0.2}{0.8}, Rightarrow, dashed, from=0, to=2-2]
			\arrow["{\psunit_A}"{pos=0.3}, shift right, between={0.2}{0.8}, Rightarrow, from=1, to=4-2]
		\end{tikzcd}
	\end{equation}
	The lax algebras laws (\cref{eq:lax-alg-triangle-1,eq:lax-alg-triangle-2,eq:lax-alg-pentagon}), being equations pertaining 2-cells all built by unique lifts, immediately follow as a consequence of them holding for $A$.

	Finally, we show $\eta^+ : Tf^*E \to f^*E$ is indeed a left-tight pullback in \(\Alg_{\lax, t\pseudo}(T)\).
	Most of the universal property follows from the same one in $\K$, so that for instance for each cone with apex $X$ as below we get a comparison map $\varphi := (a, e)$:
	\begin{equation}
		\begin{tikzcd}
			X \\
			& {f^*E} & E \\
			& A & B
			\arrow["\varphi", dashed, from=1-1, to=2-2]
			\arrow["e", shift right, curve={height=-12pt}, squiggly, from=1-1, to=2-3]
			\arrow["a"', curve={height=12pt}, squiggly, from=1-1, to=3-2]
			\arrow["{p_2}", squiggly, from=2-2, to=2-3]
			\arrow["{f^*p}"', two heads, from=2-2, to=3-2]
			\arrow["\lrcorner"{anchor=center, pos=0.125}, draw=none, from=2-2, to=3-3]
			\arrow["p", two heads, from=2-3, to=3-3]
			\arrow["f"', squiggly, from=3-2, to=3-3]
		\end{tikzcd}
	\end{equation}
	This comparison map is uniquely equipped with a laxator $\overline{\varphi}$, obtained by lifting $\overline{a}$ as follows:
	\begin{equation}
		\begin{tikzcd}[ampersand replacement=\&]
			TX \&\& {Tf^*E} \\
			\& X \&\& {f^*E} \\
			TX \&\& TA \\
			\& X \&\& A
			\arrow["{T\varphi}", squiggly, from=1-1, to=1-3]
			\arrow["\xi"{description}, from=1-1, to=2-2]
			\arrow[""{name=0, anchor=center, inner sep=0}, dashed, from=1-1, to=2-4]
			\arrow[equals, from=1-1, to=3-1]
			\arrow["{\eta^+}", from=1-3, to=2-4]
			\arrow["{Tf^*p}"{description, pos=0.7}, from=1-3, to=3-3]
			\arrow["\varphi"{description, pos=0.3}, squiggly, from=2-2, to=2-4]
			\arrow[equals, from=2-2, to=4-2]
			\arrow["{f^*p}", two heads, from=2-4, to=4-4]
			\arrow["{Ta}"{description, pos=0.3}, squiggly, from=3-1, to=3-3]
			\arrow["\xi"{description}, from=3-1, to=4-2]
			\arrow["{\overline{a}}"', between={0.2}{0.8}, Rightarrow, from=3-3, to=4-2]
			\arrow["\alpha"{description}, from=3-3, to=4-4]
			\arrow["{a}"{description}, squiggly, from=4-2, to=4-4]
			\arrow["{\exists!\ \overline{\varphi}}"', between={0}{0.8}, Rightarrow, from=1-3, to=0]
		\end{tikzcd}
	\end{equation}
	The fact that this laxator does satisfy the requirement that $f^*p(\overline{\varphi}) = \overline{a}$ follows by construction.
	As for the other, recall that $\overline{p_2} = \lambda$ from \eqref{eq:lift-of-laxator-f}, and that, since $p(e)=fa$, $p(\overline{e})=\overline{fa}$, and thus by uniqueness of the lift $\overline{e} = \overline{p_2\varphi}$.
	\begin{equation}
		\begin{tikzcd}
			TX && {Tf^*E} && TE \\
			& X && {f^*E} && E \\
			TX && TA && TB \\
			& X && A && B
			\arrow["{T\varphi}", squiggly, from=1-1, to=1-3]
			\arrow["\xi"{description}, from=1-1, to=2-2]
			\arrow[equals, from=1-1, to=3-1]
			\arrow["{Tp_2}", squiggly, from=1-3, to=1-5]
			\arrow["{\overline{\varphi}}"', between={0}{0.8}, Rightarrow, from=1-3, to=2-2]
			\arrow["{\eta^+}"{description}, from=1-3, to=2-4]
			\arrow["{Tf^*p}"{description, pos=0.7}, from=1-3, to=3-3]
			\arrow["\lambda"', between={0}{0.8}, Rightarrow, from=1-5, to=2-4]
			\arrow["\eta"{description}, from=1-5, to=2-6]
			\arrow["Tp"{description, pos=0.7}, from=1-5, to=3-5]
			\arrow["\varphi"{description, pos=0.3}, squiggly, from=2-2, to=2-4]
			\arrow[equals, from=2-2, to=4-2]
			\arrow["{p_2}"{description, pos=0.3}, squiggly, from=2-4, to=2-6]
			\arrow["{f^*p}"{description, pos=0.7}, two heads, from=2-4, to=4-4]
			\arrow["\lrcorner"{anchor=center, pos=0.125}, draw=none, from=2-4, to=4-6]
			\arrow["p"{description}, two heads, from=2-6, to=4-6]
			\arrow["Ta"{description, pos=0.3}, squiggly, from=3-1, to=3-3]
			\arrow["\xi"{description}, from=3-1, to=4-2]
			\arrow["Tf"{description, pos=0.3}, squiggly, from=3-3, to=3-5]
			\arrow["{\overline{a}}"', between={0.2}{0.8}, Rightarrow, from=3-3, to=4-2]
			\arrow["\alpha"{description}, from=3-3, to=4-4]
			\arrow["{\overline{f}}"', between={0.2}{0.8}, Rightarrow, from=3-5, to=4-4]
			\arrow["\beta"{description}, from=3-5, to=4-6]
			\arrow["a"{description}, squiggly, from=4-2, to=4-4]
			\arrow["f"{description}, squiggly, from=4-4, to=4-6]
		\end{tikzcd}
	\end{equation}
	Lawfulness can be shown by lifting the relevant equations (cf. \cref{def:lax-alg}), analogously to how we argued above---we leave this to the reader.

	It remains to show that $f^*p$ detects tightness, specifically that when \(f^*p(\varphi)\) is strict then so is \(\varphi\).
	Indeed \(f^*p(\varphi)\) strict means \(p(\overline {\varphi}) = \id_{p(\varphi)}\), and since \(p\) is discrete we must have had \(\overline {\varphi} = \id_\varphi\) to begin with.

	As for the 2-dimensional universal property, we must show that given 2-cells $u:a \twoto a':X \to A$ and $v:e \twoto e':X \to E$ such that $p(v) = f(u)$, the induced 2-cell $k:\varphi \twoto \varphi':X \to f^*E$ suitably commutes with the laxators of $\varphi$ and $\varphi'$.
	By our construction above, these maps are cocartesian and so the commutativity follows by showing $k$ is cocartesian---but that's by definition since $f^*p(k) = u$.
\end{proof}

\begin{remark}
	A similar result appears as Theorem 5.10 in \cite{capucciContextadsWreathsKleisli2024}: there it is proven that, for $(T,i,m)$ a 2-monad on a sufficiently complete 2-category, \(\Alg_{\colax}(T)\) must admit pullbacks of strict normal fibrations.
	Aside from the different in the enhancement structure, this latter result requires the base $\K$ to have commas while \cref{prop:pbs-of-tdos-in-alg-lax} does not.
	We maintain the two results are essentially equivalent, in some sense to be clarified in a different place.
\end{remark}

\section{Enhanced discrete 2-fibrations}
\label[section]{sec:enhanced-discrete-2-fibs}

We will define an enhanced discrete opfibration classifier as a representor of the 2-functor sending an object to the category of (small) discrete opfibrations over it. Accordingly, we spend a bit of time in this section adapting Lambert's theory of discrete 2-fibrations \cite{lambertDiscrete2Fibrations} to the discrete setting. We will also prove a representation theorem which is, to our knowledge, novel: a(n enhanced) discrete 2-fibration is representable if and only if it has a marked (or codotted) lax terminal object.

\subsection{Adapting Lambert's theory to the enhanced setting}
We begin by adapting the notion of discrete 2-fibration (Definition 2.4 of \cite{lambertDiscrete2Fibrations}) to the enhanced setting. While Lambert works with locally contravariant discrete 2-fibrations, we will need locally \emph{covariant} discrete 2-fibrations; for this reason, all our discrete 2-fibrations will be locally covariant.

\begin{definition}[Enhanced discrete 2-fibration]\label{defn:enhanced.disc.2.fib}
	A (split) enhanced discrete 2-fibration $P : \E \to \K$ is an enhanced 2-functor which is a (split, locally covariant) discrete 2-fibration for which the cartesian lifts of tights are tights and moreover \emph{detect tightness}. Explicitly:
	\begin{enumerate}
		\item The underlying functor of 1-categories is a split fibration, and the chosen lift of a tight map is tight.
		\item For all $X$ and $Y$, $P : \E(X, Y) \to \K(P X, P Y)$ is a discrete \emph{op}fibration.
		\item For every $f : A \to P B$ in $\K$ with lift $\lambda f : f^{\ast} B \to B$ and maps $x : X \to B$ and $y : P X \to A$ with $P x = f \circ y$, if $y$ is tight then so is the universal map $\hat{y} : X \to f^{\ast} B$ with $P \hat{y} = y$.
		\begin{equation}
			\begin{tikzcd}
				X && \\
				{P X} & {f^{\ast}B} & B \\
				& A & {P B}
				\arrow[maps to, from=1-1, to=2-1]
				\arrow[dashed, from=1-1, to=2-2]
				\arrow["x", squiggly, from=1-1, to=2-3]
				\arrow["y"', from=2-1, to=3-2]
				\arrow[squiggly, from=2-1, to=3-3]
				\arrow[squiggly, from=2-2, to=2-3]
				\arrow[maps to, from=2-2, to=3-2]
				\arrow[maps to, from=2-3, to=3-3]
				\arrow["f"', squiggly, from=3-2, to=3-3]
			\end{tikzcd}
		\end{equation}
	\end{enumerate}
\end{definition}

We adapt Construction 2.1 of \cite{lambertDiscrete2Fibrations} to the enhanced setting.

\begin{definition}[Lax 2-category of elements]\label{defn:lax.2.cat.of.elements}
	Let $\K$ be an enhanced 2-category and $F : \K\op \to \Cat$ a 2-functor. The \textbf{lax 2-category of elements} $\lxel(F)$ is the enhanced 2-category whose
	\begin{enumerate}
		\item objects are pairs $(B, X)$ with $B \in \K$ and $X \in F(B)$;
		\item arrows $(f, u) : (B, X) \to (C, Y)$ consist of a morphism $f : B \to C$ in $\K$ and a morphism $u : X \to f^{\ast}Y$ in $F(B)$, and are \textbf{tight} when $f$ is tight;
		\item 2-cells $\alpha : (f, u) \twoto (g, v)$ are 2-cells $\alpha : f \twoto g$ in $\K$ such that $\alpha^{\ast}_Y \circ u = v$ in $F(B)$.
		\begin{equation}
			\begin{tikzcd}
				& {f^{\ast}Y} \\
				X && {g^{\ast}Y}
				\arrow["u", from=2-1, to=1-2]
				\arrow["{\alpha^{\ast}_Y}", from=1-2, to=2-3]
				\arrow["v"', from=2-1, to=2-3]
			\end{tikzcd}
		\end{equation}
	\end{enumerate}
\end{definition}

The projection $P : \lxel(F) \to \K$ sending $(B, X) \mapsto B$ and $(f, u) \mapsto f$ is an enhanced discrete 2-fibration (\cref{defn:enhanced.disc.2.fib}), with splitting given by the functoriality of $F$ (as in Proposition 2.2 of \cite{lambertDiscrete2Fibrations}).

Note that in the lax slice, a map is tight if and only if its underlying map is tight.
This is in fact a general feature of enhanced discrete 2-fibrations, and it implies that an enhanced discrete 2-fibration is determined by its underlying discrete 2-fibration.

\begin{lemma}
\label{lem:characterization.of.enhanced.disc.2.fib}
	An enhanced 2-functor $P : \E \to \K$ is an enhanced discrete 2-fibration if and only if it is a discrete 2-fibration and it reflects tightness, i.e. a map $f$ in $\E$ is tight if and only if $P f$ is tight.

	As a corollary, enhanced discrete 2-fibrations over $\K$ are equivalently discrete 2-fibrations over the underlying 2-category of $\K$, enhanced by reflecting tights.
\end{lemma}
\begin{proof}
	If $f: X \to Y$ is tight if and only if $P f$ is tight, then clearly chosen cartesian lifts of tights are tight and these chosen lifts detect tightness. On the other hand, suppose that $P$ is an enhanced discrete 2-fibration and $P f$ is tight. Consider the vertical-cartesian factorization of $f$:
	\begin{equation}
		\begin{tikzcd}
			X & {f^{\ast}Y} & Y \\
			{P X} & {P X} & {P Y}
			\arrow["z", dashed, from=1-1, to=1-2]
			\arrow["f", curve={height=-18pt}, from=1-1, to=1-3]
			\arrow[maps to, from=1-1, to=2-1]
			\arrow[from=1-2, to=1-3]
			\arrow[maps to, from=1-2, to=2-2]
			\arrow["\lrcorner"{anchor=center, pos=0.125}, draw=none, from=1-2, to=2-3]
			\arrow[maps to, from=1-3, to=2-3]
			\arrow[equals, from=2-1, to=2-2]
			\arrow["{P f}"', from=2-2, to=2-3]
		\end{tikzcd}
	\end{equation}
	Since cartesian lifts of tights are tight, the lift $f^{\ast} Y \to Y$ is tight; and since cartesians lifts detect tightness, the induced map $z : X \to f^{\ast} Y$ is tight. Therefore, their composite $f$ is tight.
\end{proof}

Theorem 3.7 of \cite{lambertDiscrete2Fibrations} is easily adapted to the enhanced setting to give an equivalence between 2-functors $\K \to \Cat$ and enhanced discrete 2-fibrations over $\K$.

\begin{theorem}[{Representation theorem for enhanced discrete 2-fibrations}]
	Let $\K$ be an enhanced 2-category. Then there is an equivalence of 2-categories between the 2-category of enhanced discrete 2-fibrations over $\K$ and the 2-category of 2-functors $\K \to \Cat$:
	$$\DiscTwoFib(\K) \simeq \twocat(\K, \Cat).$$
\end{theorem}
\begin{proof}
	Using \cref{lem:characterization.of.enhanced.disc.2.fib}, this reduces to Theorem 3.7 of \cite{lambertDiscrete2Fibrations}.
\end{proof}

Note the following special case: if $X \in \K$, then the lax slice $\K \slice{\looseto}_{\lax} X \cong \lxel(\K(-, X))$ is the 2-category of elements of the 2-functor represented by $X$.

We will need the Yoneda lemma for enhanced discrete 2-fibrations. This follows quickly from Lambert's monadicity theorem (Theorem 4.13 of \cite{lambertDiscrete2Fibrations}).

\begin{lemma}[Yoneda lemma]\label{lem:2yoneda4me}
	Let $P : \E \to \K$ be an enhanced discrete 2-fibration and let $X \in \K$. Then precomposing by $\id_X : \ast \to \K \slice{\looseto}_{\lax} X$ gives an equivalence
	\begin{equation}
		(\F\Cat \slice{\to} \K)(X, P) \simeq \DiscTwoFib(\K)(\K \slice{\looseto}_{\lax} X, P).
	\end{equation}
	Moreover, this equivalence is natural in both variables.
\end{lemma}
\begin{proof}
	The slice $\K \slice{\looseto}_{\lax} X$ is the free algebra on $X : \ast \to \K$ for the monad $T$ of Theorem 4.13 of \cite{lambertDiscrete2Fibrations}; this then follows by the free-forgetful adjunction for $T$.
\end{proof}

\subsection{Marked-lax terminal objects}
The main goal of this section is to prove \cref{thm:dotted.representability}: an enhanced discrete 2-fibration is representable if and only if it has a marked-lax terminal object.
We begin to lay the groundwork for this theorem.

\begin{definition}
	A \textbf{marking} on an enhanced 2-category $\K$ is the data of a wide and locally full subcategory, whose 1-cells are called \textbf{marked} and denoted with a cut, like $\marklooseto$, $\markto$.
	A \textbf{codotting} on a marked enhanced 2-category is the data of a full subcategory of a full subcategory, whose objects are called \textbf{codotted}, such that if $B$ is codotted and $t:A \markto B$ is marked and tight, then $A$ is codotted too.
\end{definition}

There are evident (2-)categories of marked and codotted enhanced 2-categories, whose 1-cells are enhanced 2-functors preserving the marking and codotting respectively.

These definitions may seem to come a bit out of the blue; after all, we already have enhanced 2-categories and now we're adding more special 1-cells and even special objects? To this we can only answer that both decorations are very natural notions to consider when working with enhanced discrete 2-fibrations.

\begin{construction}[{Marking and codotting on lax 2-categories of elements}] \label{const:2fib.mark}
	Let $\K$ be an enhanced 2-category, and left $F : \K\op \to \F$ be an enhanced 2-functor. The enhanced lax 2-category of elements $\lxel(F)$ admits a natural marking and codotting as follows:
	\begin{equation}
		\begin{array}{ccc}
			\begin{tikzcd}
				X & {Y} \\
				B & C
				\arrow[from=1-1, to=1-2]
				\arrow[maps to, dotted, from=1-1, to=2-1]
				\arrow[maps to, dotted, from=1-2, to=2-2]
				\arrow["f"', from=2-1, to=2-2]
			\end{tikzcd}
			\qquad & \qquad
			\begin{tikzcd}
				X & Y \\
				B & C
				\arrow[from=1-1, to=1-2, squiggly]
				\arrow[maps to, dotted, from=1-1, to=2-1]
				\arrow["\lrcorner"{anchor=center, pos=0.125}, draw=none, from=1-1, to=2-2]
				\arrow[maps to, dotted, from=1-2, to=2-2]
				\arrow["f"', from=2-1, to=2-2, squiggly]
			\end{tikzcd}
			\qquad & \quad
			\begin{tikzcd}
				{\dot{Y}} \\
				C
				\arrow[maps to, dotted, from=1-1, to=2-1]
			\end{tikzcd}
			\\
			\begin{gathered}
				\textbf{tight}\\\text{tight on base}
			\end{gathered}
			\qquad & \qquad
			\begin{gathered}
				\textbf{marked}\\\text{cartesian}
			\end{gathered}
			\qquad & \quad
			\begin{gathered}
				\textbf{codotted}\\\text{tight in $F(C)$}
			\end{gathered}
		\end{array}
	\end{equation}
	We note that the codotting condition is satisfied since it is exactly the fact that $F(f)$ preserves tight objects when $f$ is tight.
\end{construction}

\begin{convention}
	If $F : \K\op \to \Cat$, then $\lxel(F)$ is marked in the same way as above, but obviously not codotted.
\end{convention}

As a special case, we have the following marking and codotting on the lax slice.

\begin{construction}[{Marking and codotting on lax slice enhanced 2-categories}] \label{const:slice.mark}
	Let $\K$ be a repletely enhanced 2-category, and let $X \in \K$ be an object. The lax slice enhanced 2-category $\K \slice{\looseto}_{\lax} X$ admits a natural marking and codotting as follows:
	\begin{equation}
		\begin{array}{ccc}
			\begin{tikzcd}
				A && B \\
				& X
				\arrow[from=1-1, to=1-3]
				\arrow[""{name=0, anchor=center, inner sep=0}, squiggly, from=1-1, to=2-2]
				\arrow[""{name=1, anchor=center, inner sep=0}, squiggly, from=1-3, to=2-2]
				\arrow[between={0.2}{0.8}, Rightarrow, shift left, from=0, to=1]
			\end{tikzcd}
			\qquad & \qquad
			\begin{tikzcd}
				A && B \\
				& X
				\arrow[squiggly, from=1-1, to=1-3]
				\arrow[""{name=0, anchor=center, inner sep=0}, squiggly, from=1-1, to=2-2]
				\arrow[""{name=1, anchor=center, inner sep=0}, squiggly, from=1-3, to=2-2]
				\arrow["\sim", between={0.2}{0.8}, Rightarrow, shift left, from=0, to=1]
			\end{tikzcd}
			\qquad & \quad
			\begin{tikzcd}
				A \\
				X
				\arrow[from=1-1, to=2-1]
			\end{tikzcd}
			\\
			\begin{gathered}
				\textbf{tight}\\\text{tight morphism on domains}
			\end{gathered}
			\qquad & \qquad
			\begin{gathered}
				\textbf{marked}\\\text{pseudo-commuting}
			\end{gathered}
			\qquad & \quad
			\begin{gathered}
				\textbf{codotted}\\\text{tight}
			\end{gathered}
		\end{array}
	\end{equation}
	We note that the codotting condition follows by the repleteness of tights.
\end{construction}

\begin{remark}
	We are being slightly weaker than what is natural in the split setting above by marking \emph{all} cartesian maps, and not just the lifts; equivalently, we mark the \emph{pseudo-}commuting triangles in the lax slice, not just the commuting ones. This subtle distinction will appear in the statement of \cref{thm:dotted.representability}, where it will correspond to the fact that the inverse to $u : \K \slice{\looseto}_{\lax} U \to \E$ is not assumed to preserve chosen lifts. We make this choice so that our definition of enhanced 2-classifier is correct with respect to Weber or Mesiti's.
\end{remark}

We now turn to the notion of marked (and codotted)-lax terminal object, which we take from \citep{gagna2022bilimitsbifinalobjects} where it is called a $M$-contraction.

Suppose that $\E$ is a marked enhanced 2-category.
Note that the projection $\E \slice{\looseto}_{\lax} X \to \E$ does not, in general, preserve the marking when $\E$ is somehow marked.
This is because every object $f : A \to X$ comes with a marked map $!_f : f \marklooseto \id_X$ in $\E \slice{\looseto}_{\lax} X$; if the domain object is not marked in $\E$, then the projection cannot preserve marked morphisms.
Rather, we must involve the marking of $\E$ in the construction of the slice:

\begin{definition}[{Marked lax slice enhanced 2-category}] \label{const:mark.mark.slice}
	Let $\E$ be a marked enhanced 2-category and let $X \in \E$. We define its \emph{marked lax slice} $\E \slice{\marklooseto}_{\lax} X$ to be the full enhanced sub-2-category of the lax slice $\E \slice{\looseto}_{\lax} X$ spanned by the morphisms $A \marklooseto X$ which are marked in $\E$ and with the following marking and codotting:
	\begin{equation}
		\begin{array}{ccc}
			\begin{tikzcd}
				A && B \\
				& X
				\arrow[from=1-1, to=1-3]
				\arrow[""{name=0, anchor=center, inner sep=0}, "/"{marking}, squiggly, from=1-1, to=2-2]
				\arrow[""{name=1, anchor=center, inner sep=0}, "/"{marking}, squiggly, from=1-3, to=2-2]
				\arrow[between={0.2}{0.8}, Rightarrow, from=0, to=1]
			\end{tikzcd}
			\quad & \quad
			\begin{tikzcd}
				A && B \\
				& X
				\arrow[squiggly, "/"{marking}, from=1-1, to=1-3]
				\arrow[""{name=0, anchor=center, inner sep=0}, "/"{marking}, squiggly, from=1-1, to=2-2]
				\arrow[""{name=1, anchor=center, inner sep=0}, "/"{marking}, squiggly, from=1-3, to=2-2]
				\arrow["\sim", between={0.2}{0.8}, Rightarrow, from=0, to=1]
			\end{tikzcd}
			\quad & \quad
			\begin{tikzcd}
				\dot{A} \\
				X
				\arrow["/"{marking}, from=1-1, to=2-1]
			\end{tikzcd}
			\\
			\begin{gathered}
				\textbf{tight}\\\text{tight}\\{}
			\end{gathered}
			\quad & \quad
			\begin{gathered}
				\textbf{marked}\\\text{marked and}\\\text{pseudo-commutative}
			\end{gathered}
			\quad & \quad
			\begin{gathered}
				\textbf{codotted}\\\text{tight and}\\\text{codotted domain}
			\end{gathered}
		\end{array}
	\end{equation}
	If $\E$ is codotted, we also require that the domain of a codotted morphism be codotted.
\end{definition}

\begin{definition}
\label{def:enhanced.quasi}
	Let $\E$ be a marked enhanced 2-category, and let $U \in \E$ be an object. We say that $U$ is \emph{marked-lax terminal} when the following commuting square of marked enhanced 2-categories admits a filler $\class$:
	\begin{equation}
		\begin{tikzcd}
			\ast & {\E \slice{\marklooseto}_{\lax} U} \\
			\E & \E
			\arrow[""{name=0, anchor=center, inner sep=0}, "{\id_U}", from=1-1, to=1-2]
			\arrow[""{name=0p, anchor=center, inner sep=0}, phantom, from=1-1, to=1-2, start anchor=center, end anchor=center]
			\arrow["U"', from=1-1, to=2-1]
			\arrow[from=1-2, to=2-2]
			\arrow[""{name=1, anchor=center, inner sep=0}, "\class"', dashed, from=2-1, to=1-2]
			\arrow[""{name=1p, anchor=center, inner sep=0}, phantom, from=2-1, to=1-2, start anchor=center, end anchor=center]
			\arrow[equals, from=2-1, to=2-2]
			\arrow["{!_U}", shift left=3, between={0.2}{0.8}, Rightarrow, from=1p, to=0p]
		\end{tikzcd}
		\quad = \quad
		\begin{tikzcd}
			\ast & {\E \slice{\marklooseto}_{\lax} U} \\
			\E & \E
			\arrow["{\id_U}", from=1-1, to=1-2]
			\arrow["U"', from=1-1, to=2-1]
			\arrow[from=1-2, to=2-2]
			\arrow[equals, from=2-1, to=2-2]
		\end{tikzcd}
	\end{equation}
	where $!_U : \class U \cong \id_U$ is a (vertical) isomorphism.

	If $\E$ is furthermore equipped with a codotting and $U$ is codotted, then we say that $U$ is \emph{codotted-lax terminal} when the above filler is a square of codotted enhanced 2-categories.
\end{definition}

There is a more down-to-earth characterization of marked-lax terminal objects.

\begin{lemma}
\label{lem:marked-lax-defn}
	Let $\E$ be a marked enhanced 2-category and let $U \in \E$. Then $U$ is a marked-lax terminal if and only if:
	\begin{enumerate}
		\item for every object $X \in \E$, there is a marked map
		\begin{equation}
			\class X : X \marklooseto U
		\end{equation}
		\item for every map $f: X \looseto Y$, there is a unique 2-cell filling the triangle below:
		\begin{equation}
			\begin{tikzcd}[ampersand replacement=\&,row sep=scriptsize]
				X \\[-1ex]
				\&\& U \\[-1ex]
				Y
				\arrow[""{name=0, anchor=center, inner sep=0}, "{\class X}", curve={height=-6pt}, squiggly, from=1-1, to=2-3]
				\arrow["f"', squiggly, from=1-1, to=3-1]
				\arrow[""{name=1, anchor=center, inner sep=0}, "{\class Y}"',squiggly, curve={height=6pt}, from=3-1, to=2-3]
				\arrow["{\exists!\ \class f}"', shift left=.5, between={0.2}{0.8}, Rightarrow, from=0, to=1]
			\end{tikzcd}
		\end{equation}
		\item for every marked map $f:X \to Y$, $\class f$ as above is invertible,
		\item We have an isomorphism $!_U : \class U \cong \id_U$.
	\end{enumerate}
	Furthermore, if $\E$ is codotted, then $U$ is codotted-lax terminal when $U$ is codotted and
	\begin{enumerate}[resume]
		\item for every $X \in \E$ codotted, $\class X$ is tight.
	\end{enumerate}
\end{lemma}
\begin{proof}
	This is a straightforward unrolling of the definitions, except for the uniqueness of condition (2). This uniqueness stands in for the 2-functoriality; it clearly implies 2-functoriality, but it is also implied by it. Suppose that $\class$ is 2-functorial $\E \to \E \slice{\marklooseto}_{\lax} U$, filling the appropriate square, and suppose $z : \class X \twoto \class Y \circ f$, seeking to show that $z = \class f$. By 2-functoriality of $\class$ applied to $z$, we have an equation of pasting diagrams (with an extra $!_U$ pasted on in the bottom right):
		\begin{equation}
		\begin{tikzcd}
			X & X & U && X & U \\
			Y &&& {=} & Y & U \\
			U & U & U && U & U
			\arrow[""{name=0, anchor=center, inner sep=0}, equals, from=1-1, to=1-2]
			\arrow["f"', from=1-1, to=2-1]
			\arrow[""{name=1, anchor=center, inner sep=0}, "{\class X}", from=1-2, to=1-3]
			\arrow["{\class X}"{description}, from=1-2, to=3-2]
			\arrow[equals, from=1-3, to=3-3]
			\arrow[""{name=2, anchor=center, inner sep=0}, "{\class X}", from=1-5, to=1-6]
			\arrow["f"', from=1-5, to=2-5]
			\arrow[equals, from=1-6, to=2-6]
			\arrow["{\class Y}"', from=2-1, to=3-1]
			\arrow[""{name=3, anchor=center, inner sep=0}, curve={height=-12pt}, from=2-5, to=2-6]
			\arrow["{\class Y}"', from=2-5, to=3-5]
			\arrow[equals, from=2-6, to=3-6]
			\arrow[""{name=4, anchor=center, inner sep=0}, equals, from=3-1, to=3-2]
			\arrow[""{name=5, anchor=center, inner sep=0}, "{\class U}", curve={height=-12pt}, from=3-2, to=3-3]
			\arrow[""{name=6, anchor=center, inner sep=0}, curve={height=12pt}, equals, from=3-2, to=3-3]
			\arrow[""{name=7, anchor=center, inner sep=0}, "{\class U}", curve={height=-12pt}, from=3-5, to=3-6]
			\arrow[""{name=8, anchor=center, inner sep=0}, curve={height=12pt}, equals, from=3-5, to=3-6]
			\arrow["z"', between={0.2}{0.8}, Rightarrow, from=0, to=4]
			\arrow["{\class(\class X)}"{description}, between={0.2}{0.8}, Rightarrow, from=1, to=5]
			\arrow["{\class f}"', between={0.2}{0.8}, Rightarrow, from=2, to=3]
			\arrow["{\class(\class Y)}"{description}, between={0.2}{0.8}, Rightarrow, from=3, to=7]
			\arrow["{!_U}", between={0.2}{0.8}, Rightarrow, from=5, to=6]
			\arrow["{!_U}", between={0.2}{0.8}, Rightarrow, from=7, to=8]
		\end{tikzcd}
	\end{equation}
	It therefore will follow that $z = \class f$ so long as $!_U \circ \class(\class X)$ and $!_U \circ \class(\class Y)$ are identity 2-cells; we'll show it for $!_U \circ \class(\class X)$, since the other case follows identically. We note that since $\class X$ is marked, $\class (\class X)$ is iso and by assumption so is $!_U$; it will therefore suffice to show that $!_U \circ \class(\class X)$ is idempotent. This follows from 2-functoriality applied to $!_U \circ \class(\class X) : \class X \twoto \class X \circ \id_X$ (with an extra $!_U$ pasted on in the bottom right):
	\begin{equation}
		\begin{tikzcd}
			X & X & U && X & U \\
			X &&& {=} & X & U \\
			U & U & U && U & U
			\arrow[""{name=0, anchor=center, inner sep=0}, equals, from=1-1, to=1-2]
			\arrow[equals, from=1-1, to=2-1]
			\arrow[""{name=1, anchor=center, inner sep=0}, "{\class X}", from=1-2, to=1-3]
			\arrow["{\class X}"{description}, from=1-2, to=3-2]
			\arrow[equals, from=1-3, to=3-3]
			\arrow["{\class X}", from=1-5, to=1-6]
			\arrow[equals, from=1-5, to=2-5]
			\arrow[equals, from=1-6, to=2-6]
			\arrow[from=2-1, to=3-1]
			\arrow[""{name=2, anchor=center, inner sep=0}, "{\class X}", curve={height=-12pt}, from=2-5, to=2-6]
			\arrow["{\class X}"', from=2-5, to=3-5]
			\arrow[equals, from=2-6, to=3-6]
			\arrow[""{name=3, anchor=center, inner sep=0}, "{\class U}", curve={height=-12pt}, from=3-1, to=3-2]
			\arrow[""{name=4, anchor=center, inner sep=0}, curve={height=12pt}, equals, from=3-1, to=3-2]
			\arrow[""{name=5, anchor=center, inner sep=0}, "{\class U}", curve={height=-12pt}, from=3-2, to=3-3]
			\arrow[""{name=6, anchor=center, inner sep=0}, curve={height=12pt}, equals, from=3-2, to=3-3]
			\arrow[""{name=7, anchor=center, inner sep=0}, "{\class U}", curve={height=-12pt}, from=3-5, to=3-6]
			\arrow[""{name=8, anchor=center, inner sep=0}, curve={height=12pt}, equals, from=3-5, to=3-6]
			\arrow["{\class(\class X)}"{description}, between={0.2}{0.8}, Rightarrow, from=0, to=3]
			\arrow["{\class (\class X)}"{description}, between={0.2}{0.8}, Rightarrow, from=1, to=5]
			\arrow["{\class(\class X)}"{description}, between={0.2}{0.8}, Rightarrow, from=2, to=7]
			\arrow["{!_U}", between={0.2}{0.8}, Rightarrow, from=3, to=4]
			\arrow["{!_U}", between={0.2}{0.8}, Rightarrow, from=5, to=6]
			\arrow["{!_U}", between={0.2}{0.8}, Rightarrow, from=7, to=8]
		\end{tikzcd}
	\end{equation}
\end{proof}

This characterization gives a straightforward corollary:

\begin{corollary}
\label{cor:slice.dotted.terminal}
	Let $\K$ be an enhanced 2-category and let $X \in \K$.
	Then $\id_X$ is a codotted-lax terminal object of the lax slice $\K \slice{\looseto}_{\lax} X$.
\end{corollary}
\begin{proof}
	For every $a : A \looseto X$, we clearly have a marked morphism $\class a: a \to \id_X$ given by $a$ itself. For every lax triangle $(f, \overline{f}) : a \to b$, we have $\class(f, \overline{f}) = \overline{f}$, and uniquely so. The rest is straightforward.
\end{proof}

While a marked-lax terminal object is not terminal, or even bi-terminal, it is terminal with respect to marked morphisms. In fact, it is a bit more, as we record in the following lemma.

\begin{lemma}
\label{lem:quasi.enhanced.etc}
	Let $\K$ be a marked enhanced 2-category, and let $U$ be a marked-lax terminal object in it. Then:
	\begin{enumerate}[label=\Alph*.]
		\item any marked 1-cell $X \marklooseto U$ is uniquely isomorphic to $\class X$ and,
		\item for any 1-cells $f : X \looseto Y$, $j : X \marklooseto U$ and $k : Y \marklooseto U$ there is a unique 2-cell $j \twoto kf$.
	\end{enumerate}
\end{lemma}
\begin{proof}
	For (A), apply condition (2) of \cref{lem:marked-lax-defn} to $j : X \marklooseto U$ to find a unique 2-cell $\class X \twoto \class U \circ j$; by condition (4) $!_U : \class U \cong \id_U$ and post-composing by an isomorphism is a bijection, so there is a unique 2-cell $\class X \twoto j$. Finally, by condition (3), this 2-cell is an isomorphism.
	For (B), we may use (A) above to unambiguously replace $j$ and $k$ by $\class X$ and $\class Y$ respectively, and then appeal to condition (2) of \cref{lem:marked-lax-defn}.
\end{proof}

As expected for a notion of terminal object, marked (and codotted)-lax terminal objects are preserved by right adjoints.

\begin{lemma}
\label{lemma:right.adjoint.preserve.codotted.terminal}
	Let $\E$ be a marked enhanced 2-category. Suppose that $r : \E \to \E'$ is a marked enhanced 2-functor with left 2-adjoint $\ell : \E' \to \E$ witnessed by 2-natural transformations unit $\eta : \id_{\E'} \Rightarrow r\ell$ and counit $\varepsilon : \ell r \Rightarrow \id_{\E}$. If $U$ is a marked-lax terminal object of $\E$, then $rU$ is a marked-lax terminal object of $\E'$. If $r$ and $\ell$ are codotted, then $r$ preserves codotted terminal objects as well.
	\end{lemma}
	\begin{proof}
	We conjugate by the adjunction as follows:
	\begin{equation}
	\begin{tikzcd}
		\ast & \ast & {\E \slice{\marklooseto}_{\lax} U} & {\E'\slice{\marklooseto}_{\lax} rU} \\
		{\E'} & \E & \E & {\E'}
		\arrow[equals, from=1-1, to=1-2]
		\arrow["rU"', from=1-1, to=2-1]
		\arrow[""{name=0, anchor=center, inner sep=0}, "{\id_U}", from=1-2, to=1-3]
		\arrow["U"', from=1-2, to=2-2]
		\arrow["{r_{\ast}}", from=1-3, to=1-4]
		\arrow[from=1-3, to=2-3]
		\arrow[from=1-4, to=2-4]
		\arrow["\varepsilon"', between={0.2}{0.8}, Rightarrow, from=2-1, to=1-2]
		\arrow["\ell"', from=2-1, to=2-2]
		\arrow[""{name=1, anchor=center, inner sep=0}, curve={height=30pt}, equals, from=2-1, to=2-4]
		\arrow[""{name=2, anchor=center, inner sep=0}, "\class"', dashed, from=2-2, to=1-3]
		\arrow[equals, from=2-2, to=2-3]
		\arrow["r"', from=2-3, to=2-4]
		\arrow["\eta", between={0.2}{1}, Rightarrow, from=1, to=2-3]
		\arrow["{!_U}", between={0.2}{0.8}, Rightarrow, from=2, to=0]
	\end{tikzcd}
	\end{equation}
	By the zig-zag laws, this pasting diagram equals the identity filling the outer square.

	We almost have a filler of the desired form, except for that pesky $\eta$. However, by the commutativity of the right half of the diagram, $\eta$ points into a split discrete 2-fibration; we may therefore lift it out of the way (by precomposing the maps into $eU$ with $\eta$) to get our desired filler:
	\begin{equation}
		\class'X := r(\class(\ell X)) \circ \eta_{X}.
	\end{equation}
\end{proof}

\begin{remark}
	In \cite{gagna2022bilimitsbifinalobjects}, the authors define a notion of $M$-bifinal object---which we would prefer to call a marked-lax terminal object, since it is to a quasi-terminal object what a marked-lax transformation is to a lax transformation---and they show that marked bilimits may be characterized by the existence of appropriate $M$-bifinal objects in an appropriately defined bicategory of marked-lax cones.
	The notion of marked-lax terminal object we are using here is very similar to that of a $M$-contraction given in Definition 2.2.4 of \cite{gagna2022bilimitsbifinalobjects}, except that we only ask that $\class U \cong \id_U$ and not that they are equal.
	Though they ask for the condition that $\class(\class X)$ be an identity, they point out in Remark 2.1.5 this follows from the assumption that $\class (\class X)$ is an isomorphism together with 2-functoriality of $\class$; this holds because $\class X$ is marked.
	For us, this appears instead as the fact that $!_U \circ \class(\class X)$ is an identity, and for the same reason.

	The main theorem of this section, \cref{thm:dotted.representability}, has much in common with the main results of \cite{gagna2022bilimitsbifinalobjects} concerning bilimits. We will not make this comparison explicit here; we could not find the exact result we needed in their paper.

	Because we are in the enhanced setting, we need a version of these results for enhanced 2-categories. The analogue of a marked-lax limit for enhanced 2-categories is the notion of \emph{dotted limits}, developed by \cite{koDottedLimits2023}.
	The first four conditions of \cref{def:enhanced.quasi} say that $U$ is a codotted-lax cocone of the identity enhanced 2-functor $\E \to \E$, in the (dual) sense of Definition 5.1.9 of \citep{koDottedLimits2023}.

	The marking condition (4) doesn't even make sense for a general (co)dotted-lax colimit, simply because usually the cocone and the diagram range over different collections of 1-cells.
	It is, however, crucial for the notion of marked-lax terminal object.
\end{remark}

We can now prove the main theorem of this section.

\begin{theorem}
\label{thm:dotted.representability}
	Let $\K$ be an enhanced 2-category and let $P : \E \to \K$ be an enhanced discrete 2-fibration over it.
	For $U \in \E$, the following are equivalent:
	\begin{enumerate}
		\item $U$ is marked-lax terminal in $\E$.
		\item The functor $(-)^{\ast}U : \K \slice{\looseto} P U \to \E$ given by the Yoneda lemma \cref{lem:2yoneda4me} is a (right adjoint) equivalence over $\K$. (The inverse need not preserve chosen lifts.)
	\end{enumerate}
\end{theorem}
\begin{proof}
	The second condition implies the first by \cref{lemma:right.adjoint.preserve.codotted.terminal} and \cref{cor:slice.dotted.terminal}. It remains to prove the converse.

	Suppose that $U$ is marked-lax terminal in $\E$. We need to show that $(-)^{\ast}U : \K \slice{\looseto} P U \to \E$ is an (adjoint) equivalence over $\K$. We define the inverse as the composite $P_{\ast} \circ \class$ as in the following diagram:
	\begin{equation}
		\begin{tikzcd}
			\ast & {\E \slice{\marklooseto}_{\lax} U} & {\K \slice{\looseto}_{\lax} P U} & \E \\
			\E & \E & \K & \K
			\arrow[""{name=0, anchor=center, inner sep=0}, "{\id_U}", from=1-1, to=1-2]
			\arrow[""{name=0p, anchor=center, inner sep=0}, phantom, from=1-1, to=1-2, start anchor=center, end anchor=center]
			\arrow["U"', from=1-1, to=2-1]
			\arrow["{P_{\ast}}", from=1-2, to=1-3]
			\arrow[from=1-2, to=2-2]
			\arrow["{(-)^{\ast}U}", from=1-3, to=1-4]
			\arrow[from=1-3, to=2-3]
			\arrow[from=1-4, to=2-4]
			\arrow[""{name=1, anchor=center, inner sep=0}, "\class"', dashed, from=2-1, to=1-2]
			\arrow[""{name=1p, anchor=center, inner sep=0}, phantom, from=2-1, to=1-2, start anchor=center, end anchor=center]
			\arrow[equals, from=2-1, to=2-2]
			\arrow["P"', from=2-2, to=2-3]
			\arrow[equals, from=2-3, to=2-4]
			\arrow["{!_U}"{pos=0.4}, shift left=4, between={0.2}{0.8}, Rightarrow, from=1p, to=0p]
		\end{tikzcd}
	\end{equation}
	where $P_{\ast}$ is given by applying $P$ to all components of the lax slice. We note that all maps in this diagram are marked (or codotted), with $\K$ considered to have all maps marked, $\K \slice{\looseto}_{\lax} PU$ marked as in \cref{const:slice.mark}, $\E$ marked as in \cref{const:2fib.mark}, $\E \slice{\marklooseto}_{\lax} U$ marked as in \cref{const:mark.mark.slice}.

	It remains to show that this is the inverse of $(-)^{\ast}U$, which we will accomplish by inspecting the whole of the above diagram.

	Looking at the left half of the above diagram, we see that $P_{\ast} !_U : P_{\ast} \circ \class \circ U \cong \id_{P U}$; by the Yoneda lemma \cref{lem:2yoneda4me}, $\id_{P U} : \ast \to \K \slice{\looseto}_{\lax} P U$ corresponds to the identity $\K \slice{\looseto}_{\lax} U \to \K \slice{\looseto}_{\lax} U$, and $U : \ast \to \E$ corresponds to $(-)^{\ast}U$. By the naturality of Yoneda, we conclude that $P_{\ast} \circ \class \circ (-)^{\ast}U \cong \id_{\K \slice{\looseto}_{\lax} U}$.

	On other hand, for every $f : X \to U$, we have a unique isomorphism $f \cong P(f)^{\ast} U$ by the universal property of cartesian lifts. This gives us the other side of the equivalence; we note that any equivalence may be rectified into an adjoint equivalence.
\end{proof}

\section{Plumbuses and enhanced 2-classifiers}
\label[section]{sec:plumbuses}

We now give the central definitions of the paper concerning classifiers for discrete opfibrations.
We closely follow \cite{weberYonedaStructures2toposes2007} and \cite{mesiti2classifiersDenseGenerators2025} in doing so, although we adapt the theory to the setting of suitably complete enhanced 2-categories we call \emph{plumbuses}.

\subsection{Plumbuses}
Consider the enhanced 2-category $\loosearrow{\K}$, where objects are loose arrows, morphisms are commuting squares, tights are squares with tight sides as below left, and 2-cells are given by barrels as below right:
\begin{equation}
\label{eq:barrels}
	\begin{tikzcd}[ampersand replacement=\&,sep=scriptsize]
		\cdot \&\& \cdot \\
		\\
		\cdot \&\& \cdot
		\arrow[""{name=0, anchor=center, inner sep=0}, curve={height=12pt}, squiggly, from=1-1, to=1-3]
		\arrow[""{name=1, anchor=center, inner sep=0}, curve={height=-12pt}, squiggly, from=1-1, to=1-3]
		\arrow[squiggly, from=1-1, to=3-1]
		\arrow[squiggly, from=1-3, to=3-3]
		\arrow[""{name=2, anchor=center, inner sep=0}, curve={height=12pt}, squiggly, from=3-1, to=3-3]
		\arrow[""{name=3, anchor=center, inner sep=0}, curve={height=-12pt}, squiggly, from=3-1, to=3-3]
		\arrow[between={0.2}{0.8}, Rightarrow, from=1, to=0]
		\arrow[between={0.2}{0.8}, Rightarrow, from=3, to=2]
	\end{tikzcd}
\end{equation}

\begin{definition}[{Plumbus}]\label[definition]{def:plumbus}
	A \textbf{smallness structure} over an enhanced 2-category $\K$ is the data of a split enhanced discrete 2-fibration which strictly includes in $\partial_1:\loosearrow{\K} \to \K$:
	\begin{equation}
		\label[diagram]{mc-0042}
		\begin{tikzcd}
			{\stdoarrow{\K}} && {\loosearrow{\K}} \\
			& \K
			\arrow[dashed, hook, from=1-1, to=1-3]
			\arrow["{{\partial_1}}"', two heads, from=1-1, to=2-2]
			\arrow["{\partial_1}", from=1-3, to=2-2]
		\end{tikzcd}
	\end{equation}
	This means $\K$ is equipped with a pullback-stable class of tight discrete opfibrations we call \textbf{small} (these are the objects of $\stdoarrow{\K}$), with \emph{chosen} left-tight pullbacks (\cref{def:left-tight-pbs}) along arbitrary loose 1-cells (these give the splitting).

	A \textbf{plumbus} is a repletely enhanced 2-category, with a tight terminal object $1$, and a smallness structure.
\end{definition}

\begin{terminology}
	The name `plumbus' is a bad pun on `plumb bob' and `cosmos', based on the idea that plumbuses are a good setting for `(un)straightening'---that is, the classification of discrete opfibrations.
\end{terminology}

\begin{remark}
	Since the 2-cells of $\loosearrow{\K}$ are barrels (cf.~\eqref{eq:barrels}), the 2-cartesianity \cite{hermidaPropertiesFibFibred1999} of \eqref{mc-0042} is due to the fact pullback of discrete opfibrations also induces a functorial action on 2-cells, thanks to their lifting property---see p.296 in \cite{weberYonedaStructures2toposes2007}.
\end{remark}

Note, also, that $\partial_1 : \stdoarrow{\K} \to \K$ is locally fully faithful, since the data of a barrel with discrete opfibrant sides is completely determined by its bottom side.

We also note an interesting consequence of being a plumbus:

\begin{proposition}
\label{prop:disc-opfibs-detect-tightness}
	In a plumbus, small discrete opfibrations \emph{detect tightness}, that is, when $p:E \discto B$ is a small discrete opfibration and $pe$ is tight for some $e:X \to E$, then so is $e$.
\end{proposition}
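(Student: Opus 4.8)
The plan is to extract this from the one substantial class of limits a plumbus has---left-tight pullbacks of tight discrete opfibrations---together with the fact that a discrete opfibration lifts $2$-cells uniquely. Given a tight discrete opfibration $p\colon E\to B$ and $e\colon X\to E$ with $pe$ tight, the first step is to form the left-tight pullback of $p$ along $pe\colon X\to B$,
\[
	\begin{tikzcd}
		P & E \\
		X & B
		\arrow["q", squiggly, from=1-1, to=1-2]
		\arrow["r"', two heads, from=1-1, to=2-1]
		\arrow["p", two heads, from=1-2, to=2-2]
		\arrow["pe"', from=2-1, to=2-2]
	\end{tikzcd}
\]
which exists because $\K$ is a plumbus and $p$ is a tight discrete opfibration; by \cref{mc-0013} its left leg $r$ is tight and detects tightness, and $pq=(pe)\,r$.

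The second step is to apply the universal property of this pullback to the cone whose two legs are $\id_X\colon X\to X$ and $e\colon X\to E$---these are compatible since both of their composites into $B$ equal $pe$---obtaining a comparison $s\colon X\to P$ with $rs=\id_X$ and $qs=e$. Since $rs=\id_X$ is tight and $r$ detects tightness, $s$ is tight, so it only remains to show that $e=qs$ is tight.

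This final step is the crux, and the one I expect to be the main obstacle: the universal property of a left-tight pullback only controls the $r$-component of a map into $P$, so it cannot by itself transfer the tightness of $s$ to $qs$. Closing the gap forces one to use that $p$ is \emph{discrete} rather than merely that the pullback exists: because cocartesian lifts along $p$ are unique, a morphism of $P$ is already determined by its $r$-component---equivalently, the displayed square is a pullback of a discrete opfibration, not of an arbitrary map---and it is this extra rigidity that one must leverage to conclude $e=qs$ lands in the tight part. One can package the same idea more symmetrically by taking $pe$ to be $\id_B$ from the start: then $P$ becomes a left-tight pullback $\id_B^{\ast}E$, the canonical comparison $c\colon E\to\id_B^{\ast}E$ is a tight isomorphism whose left leg is $p$, and the content of the proposition is exactly that $c$ is an isomorphism of \emph{enhanced} objects, i.e. that $c^{-1}$ is tight again---which is once more the point where the discreteness of $p$ has to be invoked.
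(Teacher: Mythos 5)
Your construction is the right one---the paper's entire proof is ``apply the tightness detection property of left-tight pullbacks to the pullback of $p$ along $\id_B$''---but your write-up stops exactly where the proof has to finish, and the escape route you gesture at is not the one the paper uses. Concretely, the paper pulls $p$ back along $\id_B$ rather than along $pe$. The left-tight pullback of the cospan with tight leg $p$ and loose leg $\id_B$ is presented by the identity square on $p$, i.e.\ by $E$ itself with left projection $p$ and right projection $\id_E$; the tightness-detection clause of \cref{mc-0013} then reads, verbatim, ``if $pe$ is tight then $e$ is tight''. There is no loose right projection to push tightness through, because that projection is $\id_E$. In your version, with base $pe$, the right projection $q\colon P\to E$ is genuinely only loose and, as you correctly observe, the universal property gives no control over composites of the form $q\circ(\text{tight})$; so the argument stalls at $e=qs$ and cannot be completed along that route.

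The second problem is your diagnosis of what closes the gap. Discreteness of $p$ and uniqueness of cocartesian lifts play no role: the only use of ``tight discrete opfibration'' in the paper's argument is that the plumbus axiom guarantees left-tight pullbacks for exactly this class of maps, so that the pullback along $\id_B$ exists at all; no lifting of $2$-cells is invoked. Your closing reformulation---that in the $\id_B$ version the whole content is that the comparison $c\colon E\to\id_B^{*}E$ have tight inverse---is accurate, and it does isolate the one point where the paper is terse (one must accept that the identity square itself presents the enhanced limit, rather than merely some loosely isomorphic object); but the resolution is this choice of presentation, not any rigidity coming from discreteness. As written, the proposal identifies the correct limit to use but leaves the decisive step unproved and attributes its resolution to the wrong hypothesis.
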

\begin{proof}
	Apply the tightness detection property of left-tight pullbacks to the pullback of $p$ along $\id_B$.
\end{proof}

In particular, all maps into small discrete objects (i.e. those for which the terminal morphism $!$ is a small discrete opfibration) are tight, and thus the full subcategory spanned by them is completely contained in $\K_t$.

\subsection{Enhanced 2-classifiers}
In this section, we will adapt Definition 4.1 from \cite{weberYonedaStructures2toposes2007} to the context of enhanced 2-categories.

\begin{definition}[{Enhanced 2-classifier}]
\label[definition]{mc-0045}
	An \textbf{enhanced 2-classifier} in a plumbus $\K$ is a small discrete opfibration:
	\begin{equation}
		\begin{tikzcd}[ampersand replacement=\&,column sep=scriptsize]
			{\Omega_\bullet} \&\& \Omega
			\arrow["u", two heads, from=1-1, to=1-3]
		\end{tikzcd}
	\end{equation}
	which is marked-lax terminal in the enhanced 2-category $\stdoarrow{\K}$. By \cref{thm:dotted.representability}, this equivalently means that the 2-functor \((-)^*u\) induced by taking pullbacks is an equivalence of enhanced discrete 2-fibrations over $\K$:
	\begin{equation}
		\begin{tikzcd}[ampersand replacement=\&]
			{\K\slice{\looseto}_{\lax}\Omega} \&\& {\stdoarrow{\K}} \\
			\& \K
			\arrow["{(-)^*u}"', from=1-1, to=1-3]
			\arrow["{\partial_0}"', two heads, from=1-1, to=2-2]
			\arrow["{\class(-)}"', curve={height=12pt}, dashed, from=1-3, to=1-1]
			\arrow["{\partial_1}", two heads, from=1-3, to=2-2]
		\end{tikzcd}
	\end{equation}
	When a plumbus \(\K\) is equipped with an enhanced 2-classifier, we say it is \textbf{representable}.
\end{definition}

Explicitly, using \cref{lem:marked-lax-defn}, an enhanced 2-classifier is a small discrete opfibration $u : \Omega_{\bullet} \discto \Omega$ such that:
\begin{enumerate}
	\item For any small discrete opfibration $p : E \discto B$, there is a pullback square
	\begin{equation}
		\begin{tikzcd}
			E & {\Omega_{\bullet}} \\
			B & \Omega
			\arrow["{\class_{\bullet} p}", squiggly, from=1-1, to=1-2]
			\arrow["p"', two heads, from=1-1, to=2-1]
			\arrow["\lrcorner"{anchor=center, pos=0.125}, draw=none, from=1-1, to=2-2]
			\arrow["u", from=1-2, to=2-2]
			\arrow["{\class p}"', squiggly, from=2-1, to=2-2]
		\end{tikzcd}
	\end{equation}
	\item For any square as below left, there is are unique 2-cells making the right commute:
	\begin{equation}
		\begin{tikzcd}
			E & {E'} \\
			B & {B'}
			\arrow["g", squiggly, from=1-1, to=1-2]
			\arrow["p"', two heads, from=1-1, to=2-1]
			\arrow["{p'}", two heads, from=1-2, to=2-2]
			\arrow["f", squiggly, from=2-1, to=2-2]
		\end{tikzcd}
		\quad\quad\quad
		\begin{tikzcd}[ampersand replacement=\&]
			\&\&\& {E'} \\
			E \&\&\& {B'} \\
			B \&\&\& {\Omega_\bullet} \\
			\&\&\& \Omega
			\arrow["{p'}"', two heads, from=1-4, to=2-4]
			\arrow["g", squiggly, from=2-1, to=1-4]
			\arrow["p"', two heads, from=2-1, to=3-1]
			\arrow[""{name=2, anchor=center, inner sep=0}, "{\class p'}"{pos=0.6}, curve={height=-12pt}, squiggly, from=2-4, to=4-4]
			\arrow["f", squiggly, from=3-1, to=2-4]
			\arrow[""{name=3, anchor=center, inner sep=0}, "{\class p}"', curve={height=12pt}, squiggly, from=3-1, to=4-4]
			\arrow["u"', two heads, from=3-4, to=4-4]
			\arrow["{\class g}"{description}, shift left=4, between={0.2}{0.8}, Rightarrow, from=3, to=2]
			\arrow[""{name=0, anchor=center, inner sep=0}, "{\class_\bullet p'}", curve={height=-12pt}, squiggly, from=1-4, to=3-4]
			\arrow[""{name=1, anchor=center, inner sep=0}, "{\class_\bullet p}"', curve={height=12pt}, squiggly, from=2-1, to=3-4]
			\arrow["{\exists!}"{description}, between={0.2}{0.8}, Rightarrow, from=1, to=0]
		\end{tikzcd}
	\end{equation}
	\item If the square $(f, g) : p \looseto p'$ above is a pullback, then $\class f$ and $\class_{\bullet} g$ are (unique) isomorphisms.
	\item We have an isomorphism $!_U : \class U \cong \id_U$.
\end{enumerate}

\begin{remark}
\label{mc-0030}
	In a representable plumbus, a map \(f : B \to \Omega \) is a \textbf{formal copresheaf} and \(f^* u\) its \textbf{opfibration of elements}, while \(\class p : B \to \Omega \) is the \textbf{copresheaf of fibers} of an small discrete opfibration \(p:E \to B\).
	When \(p\) is representable by \(b\), then we also say the formal copresheaf \(\class p\) is \textbf{representable} and denote it by \(B(b,-)\).
\end{remark}

\begin{observation}
\label[observation]{mc-000W-observation}
	Given \(u : \Omega _\bullet \discto \Omega \) for which \((-)^*u\) is fully faithful as a functor into tight discrete opfibrations, there is a natural notion of smallness determined by the replete image of \((-)^*u\).
	Indeed, one can see smallness as a derived, rather than primary, notion, as done e.g. in \cite{weberYonedaStructures2toposes2007}.
\end{observation}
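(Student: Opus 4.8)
The plan is to turn the slogan ``smallness is derived from $u$'' into an explicit construction and check it lands where the definitions want it. First I would \emph{define} the candidate: for each $B \in \K$ let $\catfont{stdOpfib}_u(B) \subseteq \catfont{tdOpfib}(B)$ be the \emph{replete image} of the functor $(-)^*u \colon \K(B,\Omega) \to \catfont{tdOpfib}(B)$, i.e.\ the full subcategory spanned by those tight discrete opfibrations $p \colon E \to B$ that are isomorphic over $B$ to $f^*u$ for some formal copresheaf $f \colon B \to \Omega$. That $f^*u$ is indeed an object of $\catfont{tdOpfib}(B)$ uses the plumbus axioms: $u$ is a tight discrete opfibration, so its left-tight pullback along any $f$ exists with tight left leg, and discrete opfibrations are closed under pullback. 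Each $\catfont{stdOpfib}_u(B)$ is by construction a replete, full subcategory of $\catfont{tdOpfib}(B)$, so the only thing left in order to match \cref{mc-003Z} is that these assemble into a \emph{sub}pseudofunctor of $\catfont{tdOpfib}$.

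That last point is the one step requiring genuine (if routine) work, and where I expect the only real friction: pullback-stability. Given $p \cong f^*u$ over $B$ and any $g \colon B' \to B$, the evident candidate classifier for $g^*p$ is $fg \colon B' \to \Omega$, and then $g^*p \cong g^*(f^*u) \cong (fg)^*u$ over $B'$ — the second isomorphism being exactly the pseudofunctoriality of $\catfont{tdOpfib}$ recorded in \cref{mc-0042}, i.e.\ that a pasting of two left-tight pullbacks of $u$ is again a left-tight pullback of $u$. Hence $g^*p \in \catfont{stdOpfib}_u(B')$, and $\catfont{stdOpfib}_u$ is a pullback-ideal of tight discrete opfibrations in the sense of \cref{mc-003Z}; put differently, $(-)^*u$ is pseudonatural into $\catfont{tdOpfib}$ and we have simply taken its pointwise — hence global — replete image.

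It remains to see that $u$ classifies this derived notion. Taking $g = \id_\Omega$ gives $u \cong (\id_\Omega)^*u$, so $u \in \catfont{stdOpfib}_u(\Omega)$ and $u$ is itself ``small''; and since a fully faithful functor is an equivalence onto its replete image, the standing hypothesis on $u$ makes each component $\K(B,\Omega) \to \catfont{stdOpfib}_u(B)$ an equivalence, so $(-)^*u$ corestricts to an equivalence of pseudofunctors $\K(-,\Omega) \xrightarrow{\simeq} \catfont{stdOpfib}_u$ — which is the condition in \cref{mc-0045} for $u$ to be a $2$-classifier for $\catfont{stdOpfib}_u$. The sole point that is not wholly automatic is whether $\catfont{stdOpfib}_u$ contains $\id_1$ (and hence, by pullback-stability, every identity), as \cref{mc-0045} demands of a \emph{small} ideal: this amounts to asking that the terminal tight discrete opfibration over $1$ be of the form $f^*u$ for some $f \colon 1 \to \Omega$, equivalently that $\Omega$ carry a global point whose $u$-fibre is terminal. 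I would record that this holds in all the examples of \cref{mc-000P} (the singleton set when $\Omega = \Set$, and likewise elsewhere), and may be posited as a harmless extra hypothesis on $u$; granting it, $u$ is a $2$-classifier for $\catfont{stdOpfib}_u$, which is the substance of the observation and bears out Weber's view that smallness need not be taken as primitive.
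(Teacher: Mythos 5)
Your proposal is correct and is exactly the argument the paper intends: the paper states this as an unproved observation (deferring to Weber), and your unpacking — replete image, pullback-stability via $g^*(f^*u)\cong(fg)^*u$, and the fact that a fully faithful functor is an equivalence onto its replete image — is the standard route. Your one substantive addition, that containment of $\id_1$ in the derived ideal is \emph{not} automatic and must be imposed (or verified) separately to meet the smallness clause of \cref{mc-0045}, is a genuine and worthwhile caveat that the paper's phrasing glosses over.
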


The restriction to small discrete opfibrations, rather than the whole class of tight discrete opfibrations, is due to a necessity arising in practice, for instance in the archetypal example of the 2-category of large but locally small categories:

\begin{example}
\label[example]{ex:2-class-cat}
	The archetypal example of 2-classifier is the projection \(\Set_\bullet \overset{u}\discto \Set\) from the category pointed sets to the category of sets in \(\Cat\) (which, for us, is the 2-category of large but locally small categories).
	As the small discrete opfibrations, pick those having small (in the usual sense of `having the size of a set in the first universe') fibers.
	This recovers the well-known discrete opfibration of elements construction:

	\begin{equation}
		\begin{tikzcd}
			{\int  F} & {\Set_\bullet } \\
			B & {\Set}
			\arrow[from=1-1, to=1-2]
			\arrow["{\partial_1}"', two heads, from=1-1, to=2-1]
			\arrow["\lrcorner "{anchor=center, pos=0.125}, draw=none, from=1-1, to=2-2]
			\arrow["u", two heads, from=1-2, to=2-2]
			\arrow["F"', from=2-1, to=2-2]
		\end{tikzcd}
	\end{equation}

	We can choose naturally \(\int \class p \cong p\) and \(\class \int F \cong F\) so as to make this into an equivalence.
\end{example}

Notice in \cref{ex:2-class-cat} above we could not have chosen \emph{all} discrete opfibrations, since that would have forced \(\Omega \) to be the \emph{very large} category of \emph{large sets}, which is not a member of \(\Cat\).

\begin{remark}
	Moreover, let us also remark that both tightness and smallness are necessary notions: tightness refines arbitrary maps, while smallness refines only tight discrete opfibrations.
	For instance, in constructing \(\Alg_{\lax, t\pseudo}(T)\) one uses the notion of tightness from $\K$ to control which maps can become algebras, which is unrelated to the notion of smallness governing the classifiability of discrete opfibrations.
	Moreover, in \(\Alg_{\lax, t\pseudo}(T)\) tightness and smallness will distinguish, respectively, pseudo and strict maps---and this choice is crucial: opfibrations need to be strict to behave well, whereas strictness is too strong of a property for the maps we need to be tight (e.g. it's not a replete property).
\end{remark}

\begin{remark}
	Finally, we remark that smallness is related to \emph{admissibility} from \cite{streetYonedaStructures2categories1978}, the notion of \emph{attribute} from \cite{weberYonedaStructures2toposes2007}, and finally the unnamed `pullback-stable property \(P\)' in \cite{mesiti2classifiersDenseGenerators2025}.
	The \emph{attributes} of Weber are, essentially, what we would call the \emph{small} modules (i.e. two-sided discrete opfibrations), though, unlike Weber, we do not venture in that direction (except briefly in \cref{app:koudenburg}, indeed see \cref{mc-003Q-small-modules}).
	Finally, \emph{admissibility} is the most indirectly related: as shown by Weber, his notion of 2-topos gives rise to a Yoneda structure where the admissible maps are those which represent a small two-sided opfibration.
	However, we cannot easily express this notion in the setting of representable plumbuses since we lack a duality (as in a Weber 2-topos) which would make two-sided discrete opfibrations instances of one-sided ones.
\end{remark}

We can also explore the meaning of codotting in representable plumbuses.

\begin{definition}[Perfectness]\label{mc-002Z}
	A small discrete opfibration in a representable plumbus is \textbf{perfect} when its classifying map is tight.
\end{definition}

We note that $p$ is perfect if and only if it is codotted in $\stdoarrow{\K}$, transporting the codotting of $\K \slice{\looseto} \Omega$ over the equivalence $(-)^{\ast}u$.

\begin{construction}[{Marking and codotting on small discrete opfibrations}]\label[construction]{const:marking.codotting.stdopfib}
	Let $\K$ be a representable plumbus.
	Consider the subcategory $\stdoarrow{\K} \subseteq \loosearrow{\K}$ spanned by the small discrete opfibrations, as defined above.
	It is enhanced, marked and codotted as follows:
	\begin{equation}
		\begin{array}{ccc}
			\begin{tikzcd}[ampersand replacement=\&]
				\cdot \& \cdot \\
				\cdot \& \cdot
				\arrow[from=1-1, to=1-2]
				\arrow[two heads, from=1-1, to=2-1]
				\arrow[two heads, from=1-2, to=2-2]
				\arrow[from=2-1, to=2-2]
			\end{tikzcd}
			\qquad & \qquad
			\begin{tikzcd}[ampersand replacement=\&]
				\cdot \& \cdot \\
				\cdot \& \cdot
				\arrow[squiggly, from=1-1, to=1-2]
				\arrow[two heads, from=1-1, to=2-1]
				\arrow["\lrcorner"{anchor=center, pos=0.125}, draw=none, from=1-1, to=2-2]
				\arrow[two heads, from=1-2, to=2-2]
				\arrow[squiggly, from=2-1, to=2-2]
			\end{tikzcd}
			\qquad & \quad
			\begin{tikzcd}[ampersand replacement=\&]
				E \& {\Omega_\bullet} \\
				B \& \Omega
				\arrow["{\class_\bullet p}", from=1-1, to=1-2]
				\arrow["p"', two heads, from=1-1, to=2-1]
				\arrow["\lrcorner"{anchor=center, pos=0.125}, draw=none, from=1-1, to=2-2]
				\arrow["u", two heads, from=1-2, to=2-2]
				\arrow["{\class p}"', from=2-1, to=2-2]
			\end{tikzcd}
			\\
			\begin{gathered}
				\textbf{tight}\\\text{tight sides}
			\end{gathered}
			\qquad & \qquad
			\begin{gathered}
				\textbf{marked}\\\text{pullbacks}
			\end{gathered}
			\qquad & \quad
			\begin{gathered}
				\textbf{codotted}\\\text{perfect (\cref{mc-002Z})}
			\end{gathered}
		\end{array}
	\end{equation}
	The required closure property for perfect discrete opfibrations is readily verified: indeed, if $p$ is perfect and $q \to p$ is a cartesian square with tight sides, then by pullback pasting we obtain $q$ is classified by a tight too (because tights are replete in a plumbus), and thus perfect.
\end{construction}

\subsubsection{Good 2-classifiers}
\label{sec:good-2-classifiers}
An important observation, due to \cite{weberYonedaStructures2toposes2007}, is that when $\id_1$ is small---and we always assume that in a representable plumbus---then the maps $\class(\id_X) = X \xrightarrow{!} 1 \xrightarrow{\class (\id_1)} \Omega$ are terminal in each category $\K(X,\Omega)$, and thus $\class(\id_1)$ can be considered `internally terminal' in $\Omega$, that is:

\begin{proposition}[{\cite{weberYonedaStructures2toposes2007}, Proposition~8.2}]
\label{id-1-is-terminal}
	The classifying map $\class(\id_1)$ is right adjoint to $!:\Omega \to 1$.
\end{proposition}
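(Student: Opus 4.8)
The plan is to reduce the statement to the claim flagged in the paragraph just above it, namely that for every object $X$ the loose map $X \xrightarrow{!} 1 \xrightarrow{\class(\id_1)} \Omega$ is terminal in the hom-category $\K(X,\Omega)$. This is precisely the data needed for a right adjoint to a map into a terminal object: since $\K(X,1)$ is the terminal category for every $X$, a 1-cell $r : 1 \to \Omega$ is right adjoint to $!:\Omega\to 1$ exactly when $r\circ\,!_X$ is terminal in $\K(X,\Omega)$ naturally in $X$, and the naturality comes for free because precomposing $r\circ\,!_X$ with any $\phi:Y\to X$ yields $r\circ\,!_Y$ again. So it suffices to establish the terminality claim for $r=\class(\id_1)$.

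To prove terminality I would transport the question along the fully faithful pullback functor $(-)^*u\colon \K(X,\Omega)\to\catfont{tdOpfib}(X)$. First compute the discrete opfibration classified by $\class(\id_1)\circ\,!_X$: by pseudofunctoriality of $\catfont{tdOpfib}$ we have $(\class(\id_1)\circ\,!_X)^*u \cong\; !_X^*\bigl(\class(\id_1)^*u\bigr) \cong\; !_X^*(\id_1)$, using that $\class(\id_1)$ classifies $\id_1$; and the pullback of $\id_1\colon 1\to 1$ along $!_X\colon X\to 1$ is $\id_X$. Hence $(\class(\id_1)\circ\,!_X)^*u\cong\id_X$ in $\catfont{tdOpfib}(X)$, and indeed already in the pullback-ideal $\catfont{stdOpfib}(X)$, since $\id_X$ is a pullback of the small $\id_1$. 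Now $\id_X$ is the terminal object of the slice $\K/X$, and $\catfont{tdOpfib}(X)$ is a full subcategory of $\K/X$ which contains it (the identity is tight, is trivially a discrete opfibration, and is small), so $\id_X$ is terminal in $\catfont{tdOpfib}(X)$. Since $(-)^*u$ is fully faithful, $\K(X,\Omega)(f,\class(\id_1)\circ\,!_X)\cong\catfont{tdOpfib}(X)(f^*u,\id_X)\cong\{\ast\}$ for every $f\colon X\to\Omega$; that is, $\class(\id_1)\circ\,!_X$ is terminal.

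Finally I would assemble the adjunction explicitly: specializing to $X=\Omega$ and $f=\id_\Omega$ gives a unique 2-cell $\eta\colon\id_\Omega\Rightarrow\class(\id_1)\circ\,!_\Omega$, which serves as unit; the counit $\epsilon\colon\,!\circ\class(\id_1)\Rightarrow\id_1$ is forced to be the identity because $\K(1,1)$ is terminal. One triangle identity lives in $\K(\Omega,1)$ and is automatic; the other asserts equality of two 2-cells $\class(\id_1)\Rightarrow\class(\id_1)\circ\,!_\Omega\circ\class(\id_1)=\class(\id_1)$, which holds because $\class(\id_1)=\class(\id_1)\circ\,!_1$ is terminal in $\K(1,\Omega)$, so $\K(1,\Omega)(\class(\id_1),\class(\id_1))$ is a singleton. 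This yields $!\dashv\class(\id_1)$, as claimed.

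I expect no serious obstacle: once the 2-classifier equivalence is in hand the argument is essentially bookkeeping. The only points needing care are the coherence isomorphisms witnessing pseudofunctoriality of $\catfont{tdOpfib}$ in the computation $(\class(\id_1)\circ\,!_X)^*u\cong\id_X$, and the (routine) verification that $\id_X$ genuinely lies in $\catfont{stdOpfib}(X)\subseteq\catfont{tdOpfib}(X)$ — tightness of identities, the trivial unique-lifting property of an identity, and pullback-stability of smallness applied to $\id_1$.
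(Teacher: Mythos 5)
Your proof is correct and follows exactly the route the paper has in mind: it cites Weber's Proposition~8.2 and, in the preceding paragraph, reduces the adjunction to the terminality of $X \to 1 \xrightarrow{\class(\id_1)} \Omega$ in each $\K(X,\Omega)$, which you establish by transporting along the fully faithful $(-)^*u$ to the terminal object $\id_X$ of $\catfont{tdOpfib}(X)$. The reduction of the triangle identities to terminality and the computation $(\class(\id_1)\circ {!}_X)^*u \cong \id_X$ are both sound, so there is nothing to add.
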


Moreover, \cite{shulmanClassifyingDiscreteOpfibration2023} makes the following observation (which we adapt to the enhanced setting):

\begin{proposition}[{\cite{shulmanClassifyingDiscreteOpfibration2023}, Theorem 0.1}]
\label{shulman-comma-th}
	In a representable plumbus where $\id_1$ is perfect (\cref{mc-002Z}), there exists an l-rigged comma (\cref{mc-002L}):
	\begin{equation}
	\label[diagram]{shulman-comma-th-diagram}
		\begin{tikzcd}[ampersand replacement=\&]
			{\Omega_\bullet} \& 1 \\
			\Omega \& \Omega
			\arrow[from=1-1, to=1-2]
			\arrow["u"', two heads, from=1-1, to=2-1]
			\arrow["\lrcorner"{anchor=center, pos=0.125}, draw=none, from=1-1, to=2-2]
			\arrow[between={0.2}{0.8}, Rightarrow, from=1-2, to=2-1]
			\arrow["{\class(\id_1)}", from=1-2, to=2-2]
			\arrow[equals, from=2-1, to=2-2]
		\end{tikzcd}
	\end{equation}
\end{proposition}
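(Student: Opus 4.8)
The plan is to verify the enhanced universal property of the l-rigged comma $\class(\id_1)/\id_\Omega$ directly for the square in \cref{shulman-comma-th-diagram}, rather than forming that comma and comparing — in a bare plumbus we cannot assume it exists. Recall from \cref{mc-002L} that what must be checked is threefold: (i) the square is a comma object in the underlying 2-category $\K_l$ over the cospan $\Omega \xrightarrow{\id_\Omega} \Omega \xleftarrow{\class(\id_1)} 1$; (ii) its two legs $u : \Omega_\bullet \to \Omega$ and $! : \Omega_\bullet \to 1$ are tight; and (iii) these legs jointly detect tightness. For the statement to even make sense we also need the right leg $\class(\id_1)$ of the cospan to be tight, which is exactly the hypothesis that $\id_1$ is perfect (\cref{mc-002Z}).

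For (i) I would compute $\K(X,\Omega_\bullet)$ for an arbitrary $X$ and match it with the value at $X$ of the comma-weighted limit, namely the coslice $t_X \downarrow \K(X,\Omega)$ under the terminal formal copresheaf $t_X := \class(\id_1)\circ {!_X}$ (the $1$-leg of any cone being forced to be ${!_X}$). First, the pullback square defining $\Omega_\bullet$, together with the universal property of pullbacks, identifies a map $X \to \Omega_\bullet$ lying over a formal copresheaf $F : X \to \Omega$ with a section of its opfibration of elements $F^\ast u \to X$; using the functorial action of pullback of discrete opfibrations on 2-cells (p.~296 of \cite{weber-2007-yoneda}) this upgrades to an isomorphism of categories $\K(X,\Omega_\bullet) \cong \int_{F\in\K(X,\Omega)} \K/X(\id_X, F^\ast u)$. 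Next, since $\class(\id_1)$ classifies $\id_1$ we have $\class(\id_1)^\ast u \cong \id_1$, hence $t_X^\ast u \cong {!_X^\ast}\,\id_1 \cong \id_X$; so the classifier equivalence $(-)^\ast u : \K(X,\Omega) \simeq \catfont{stdOpfib}(X)$ sends a 2-cell $t_X \Rightarrow F$ to a morphism $\id_X \to F^\ast u$ in $\K/X$, i.e.\ a section of $F^\ast u \to X$, and this is an equivalence. Composing the two identifications gives $\K(X,\Omega_\bullet) \cong t_X \downarrow \K(X,\Omega)$. Finally I would check this is pseudonatural in $X$ — inheriting pseudonaturality from $\class(-)$ and $(-)^\ast u$ — and, crucially, that it is the comparison functor induced by the cone $(u,\,!,\,\rho)$, where $\rho : \class(\id_1)\circ{!_{\Omega_\bullet}} \Rightarrow u$ is the canonical 2-cell, i.e.\ the one classifying the diagonal section of the discrete opfibration $u^\ast u \to \Omega_\bullet$; concretely, whiskering $\rho$ by $h : X \to \Omega_\bullet$ must recover the section of $(uh)^\ast u$ corresponding to $h$. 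This exhibits the square as a comma in $\K_l$.

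For (ii)--(iii): the leg $u$ is tight because the 2-classifier is by definition a small, hence tight, discrete opfibration, and $!$ is tight because $1$ is a tight terminal object (\cref{mc-000U}); since $!\circ h$ is automatically tight, joint detection of tightness reduces to the implication ``$u\circ h$ tight $\Rightarrow h$ tight'', which is \cref{prop:disc-opfibs-detect-tightness} (tight discrete opfibrations detect tightness in a plumbus). Together with $\class(\id_1)$ being tight by perfectness of $\id_1$, this upgrades the plain comma of (i) to an l-rigged comma.

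The main obstacle I expect is the bookkeeping in step (i): pinning down that the abstract equivalence $\K(X,\Omega_\bullet) \simeq t_X \downarrow \K(X,\Omega)$ is \emph{the} one induced by the purported limit cone $(u,\,!,\,\rho)$ — equivalently, isolating the correct canonical 2-cell $\rho$ and tracking it coherently through both the pullback identification and the classifier equivalence — and verifying pseudonaturality of the whole correspondence in $X$. Once that is in place, the tightness clauses (ii)--(iii) follow immediately from the plumbus axioms and the perfectness hypothesis, with no further computation.
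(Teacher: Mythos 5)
Your proposal is correct and follows essentially the same route as the paper's proof (which adapts Shulman's argument): identify a map $h : X \to \Omega_\bullet$ with the formal copresheaf $uh$ together with a section of $(uh)^*u$, use full faithfulness of $(-)^*u$ and the fact that $\class(\id_1)^*u \cong \id_1$ to match such sections with fillers of the lax square, and then obtain riggedness from perfectness of $\id_1$, tightness of $u$ and $!$, and \cref{prop:disc-opfibs-detect-tightness}. You are somewhat more explicit than the paper about pinning down the canonical cone and the pseudonaturality of the correspondence, but the underlying argument is the same.
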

\begin{proof}
	We reproduce the argument of Shulman and adapt it to the enhanced setting.
	Suppose given a lax square:
	\begin{equation}
		\begin{tikzcd}[ampersand replacement=\&]
			X \& 1 \\
			\Omega \& \Omega
			\arrow[from=1-1, to=1-2]
			\arrow["f"', squiggly, from=1-1, to=2-1]
			\arrow["\varphi", between={0.2}{0.8}, Rightarrow, from=1-2, to=2-1]
			\arrow["{\class(\id_1)}", from=1-2, to=2-2]
			\arrow[equals, from=2-1, to=2-2]
		\end{tikzcd}
	\end{equation}
	We want to show such data is equivalent to a map $h:X \to \Omega_\bullet$.
	Intuitively, a map like $h$ corresponds to a small discrete opfibration (which is the one classified by $uh$) and a section thereof (which picks out the additional `points' $h$ picks by landing in $\Omega_\bullet$ rather than $\Omega$).

	Indeed, note $\varphi$ is a map of classifying maps $\class (\id_X)  \twoto \class f$, and thus, by fully faithfulness of $(-)^*u$, corresponds to a unique map $\id_X \to f^*u$, i.e. the sought section of $f^*u$.
	Clearly, this correspondence is a bijection so \cref{shulman-comma-th-diagram} is a comma.

	To conclude we must prove the comma is l-rigged.
	First, note that by definition of perfectness, $\class(\id_1)$ is tight.
	Second, $u$ and $!_{\Omega_\bullet}$ are also tight by assumption, and finally they jointly detect tightness since $u$ does by \cref{prop:disc-opfibs-detect-tightness} and postcomposing with $!_{\Omega_\bullet}$ always yields a tight map.
\end{proof}

\begin{definition}[{Good 2-classifier}]\label[definition]{mc-0012}
	Let \(\K\) be a representable plumbus.
	We say its 2-classifier \(u\) is \textbf{good} when $\id_1$ is perfect, and we say \(\K\) is \textbf{well-representable}.
\end{definition}

A good 2-classifier, therefore, substantiates the intuition that $\Omega_\bullet$ is really the object of `pointed internal sets'.
In this setting, we can use \( \tau := \class(\id_1) \) to classify discrete opfibrations by taking commas: for every small discrete opfibration $p$, a straightforward application of the pasting lemma for commas (\cref{mc-000F}) shows that \(p \cong \tau / \class p\):
\begin{equation}
\label[diagram]{diag:good-classifying-commas}
	\begin{tikzcd}
		E & {\tau  / \Omega } & 1 \\
		B & \Omega  & \Omega
		\arrow[squiggly, from=1-1, to=1-2]
		\arrow["p"', two heads, from=1-1, to=2-1]
		\arrow["\lrcorner "{anchor=center, pos=0.125}, draw=none, from=1-1, to=2-2]
		\arrow[from=1-2, to=1-3]
		\arrow["u"', two heads, from=1-2, to=2-2]
		\arrow["\lrcorner "{anchor=center, pos=0.125}, draw=none, from=1-2, to=2-3]
		\arrow[shorten <=4pt, shorten >=4pt, Rightarrow, from=1-3, to=2-2]
		\arrow["\tau ", from=1-3, to=2-3]
		\arrow["{\class p}"', squiggly, from=2-1, to=2-2]
		\arrow[equals, from=2-2, to=2-3]
		\end{tikzcd}
		\quad  = \quad
		\begin{tikzcd}[column sep=scriptsize]
		E && 1 \\
		B && \Omega
		\arrow[from=1-1, to=1-3]
		\arrow["p"', two heads, from=1-1, to=2-1]
		\arrow["\lrcorner "{anchor=center, pos=0.125}, draw=none, from=1-1, to=2-3]
		\arrow[shorten <=10pt, shorten >=10pt, Rightarrow, from=1-3, to=2-1]
		\arrow["\tau ", from=1-3, to=2-3]
		\arrow["{\class p}"', squiggly, from=2-1, to=2-3]
	\end{tikzcd}
\end{equation}
Observe that \cref{mc-000F} here extends to a statement about left-tight pullbacks and l-rigged commas: if the pullback above is left-tight, then the comma square on the right is l-rigged, and \emph{vice versa}.

Thus we have again an equivalence of 2-fibred enhanced 2-categories
\begin{equation}
	\tau / - : \K\slice{\looseto}_{\lax}\Omega \xrightarrow {\sim } \stdoarrow{\K}.
\end{equation}
which makes $\tau$ a \emph{good 2-classifier}, in the sense of Definition 2.15 of \cite{mesiti2classifiersDenseGenerators2025}, for the pullback-stable property $P$ of being small.

\begin{example}
	The archetypal 2-classifier in $\Cat$ (\cref{ex:2-class-cat}) arises from a good 2-classifier, namely the inclusion \(1 \xrightarrow {1} \Set\), which is indeed the classifier of $\id_1$.
\end{example}

\begin{construction}[{Mesiti's construction}]\label[construction]{mc-002P}
	As anticipated in the introduction, in \cite{mesiti2classifiersDenseGenerators2025} it is shown that for any small category \(C\), the (chordate enhanced) 2-category \([C, \Cat]\) admits a good 2-classifier:
	\begin{equation}
		\begin{tikzcd}
		{C} && {\Cat} \\[-3ex]
		c && {[c/C, \Set]} \\
		{c'} && {[c'/C, \Set]}
		\arrow["\Omega ", from=1-1, to=1-3]
		\arrow[""{name=0, anchor=center, inner sep=0}, "f"', from=2-1, to=3-1]
		\arrow[""{name=1, anchor=center, inner sep=0}, "{(f^*)^*}", from=2-3, to=3-3]
		\arrow[shorten <=13pt, shorten >=27pt, maps to, from=0, to=1]
		\end{tikzcd}
	\end{equation}
	with \(\tau  : 1 \to  \Omega \) picking the constant functor \(1 \in  [c/C, \Set]\) for each \(c \in C\).
\end{construction}

\begin{example}[{Good 2-classifier on \(\mathrm{Graph}(\Cat)\)}]\label[example]{mc-002Q}
	Following Mesiti's recipe for \(\mathrm{Graph}(\Cat) = \left[\left\{\begin{tikzcd}[sep=small,cramped]
		e & v
		\arrow["t"', shift right, from=1-1, to=1-2]
		\arrow["s", shift left, from=1-1, to=1-2]
	\end{tikzcd}\right\}, \Cat\right]\), one finds that:
	\begin{equation}
		\Omega (e) = \left [ \left \{
			\begin{tikzcd}[cramped,sep=tiny]
			& {\id_e} \\
			s && t
			\arrow[from=1-2, to=2-1]
			\arrow[from=1-2, to=2-3]
			\end{tikzcd}
			\right \}, \Set
		\right ],
		\qquad
		\Omega (v) = \left [\left \{ \id_v \right \}, \Set \right ]
	\end{equation}
	with \(\Omega (s)\) and \(\Omega (t)\) given by projecting the feet of the spans.
	Thus \(\Omega \) is the (large) graph of spans of sets, and \(\tau : 1 \to \Omega \) picks out the trivial span \(1 = 1 = 1\).
\end{example}

\section{Lifting theorem}
\label[section]{sec:lifting-theorem}

For the rest of the section, fix a representable plumbus \(\K\) (\cref{mc-0045}), having enhanced 2-classifier \(u : \Omega_\bullet \discto \Omega \).
The main goal of this section is to give conditions on an enhanced 2-monad \(T\) on \(\K\) such that \(\LaxAlg_{\lax, t\pseudo}(T)\) (and \(\PsAlg_{\lax, t\pseudo}(T)\)) are again representable (\cref{mc-0003}) or even well-representable (\cref{mc-0003-good}) plumbuses (\cref{prop:lax-alg-plumbus}).

\begin{remark}
	These theorems take care of both lax and pseudo-$T$-algebras, but not of strict $T$-algebras.
	The matter of strictification of our result is, in fact, not trivial. Whatever technique is employed would need to play well with (1) the strictness of small discrete opfibrations and (2) the classification property of $\omega$. We expect that descent arguments on suitable 2-categories (such as presheaf 2-categories) can show that when $T$ is opfibrantly cartesian, its pseudo-algebra coclassifier $T'$ is as well; then $\Omega$ can classify strict discrete opfibrations amongst \emph{strict} $T'$ algebras.
	We leave this line of enquiry open for future work.
\end{remark}

\subsection{Lax algebra structures on marked-lax terminal objects}
\label[section]{sec:formal-2-class}

In this section, we will show that a marked-lax terminal object admits an essentially unique lax algebra structure for any appropriate 2-monad.

The following can be considered a purely formal version of the main result of the paper, \cref{mc-0003} below.

\begin{definition}
\label{def:marked.2-monad}
	An enhanced 2-monad $(T,i,m)$ over a marked enhanced 2-category $\K$ is \textbf{marked} when it preserves marked 1-cells.
	It is \textbf{fully marked} when $i$ and $m$ have marked components.
\end{definition}

\begin{construction}[{Marking and codotting of lax algebras}]
	Suppose $T$ is marked.
	Consider the enhanced 2-category $\LaxAlg_{\lax, t\pseudo}(T)$ of lax algebras defined in \cref{def:lax-alg}.
	We consider it marked at those morphisms whose carriers are marked in $\K$, and codotted at those algebras whose carrier is codotted in $\K$ and whose structure map is marked:
	\begin{equation}
		\begin{array}{ccc}
			\begin{tikzcd}[ampersand replacement=\&]
				TA \& TB \\
				A \& B
				\arrow[""{name=0, anchor=center, inner sep=0}, "Tf", from=1-1, to=1-2]
				\arrow["\alpha"', from=1-1, to=2-1]
				\arrow["\beta", from=1-2, to=2-2]
				\arrow[""{name=1, anchor=center, inner sep=0}, "f"', from=2-1, to=2-2]
				\arrow["\wr", between={0.3}{0.7}, Rightarrow, from=0, to=1]
			\end{tikzcd}
			\qquad & \qquad
			\begin{tikzcd}[ampersand replacement=\&]
				TA \& TB \\
				A \& B
				\arrow[""{name=0, anchor=center, inner sep=0}, "Tf"{inner sep=.8ex}, "/"{marking}, from=1-1, to=1-2]
				\arrow["\alpha"', from=1-1, to=2-1]
				\arrow["\beta", from=1-2, to=2-2]
				\arrow[""{name=1, anchor=center, inner sep=0}, "f"'{inner sep=.8ex}, "/"{marking}, from=2-1, to=2-2]
				\arrow[between={0.3}{0.7}, Rightarrow, from=0, to=1]
			\end{tikzcd}
			\qquad & \qquad
			\begin{tikzcd}[ampersand replacement=\&]
				TA \\
				A
				\arrow["\alpha"'{inner sep=.8ex}, "/"{marking}, from=1-1, to=2-1]
			\end{tikzcd}
			\\
			\begin{gathered}
				\textbf{tight}\\\text{tight pseudo-$T$-maps}\\\phantom{space}
			\end{gathered}
			\qquad & \qquad
			\begin{gathered}
				\textbf{marked}\\\text{carrier marked in $\K$}\\\phantom{space}
			\end{gathered}
			\qquad & \quad
			\begin{gathered}
				\textbf{codotted}\\\text{$A$ codotted in $\K$}\\\text{and $\alpha$ marked}
			\end{gathered}
		\end{array}
	\end{equation}
\end{construction}

\begin{theorem}
\label{th:lift.of.enh.quasi-terminal.to.laxalg}
	Let $\K$ be a marked and codotted enhanced 2-category with an codotted-marked-lax terminal object $U$.
	Suppose that $(T,i,m)$ is a marked enhanced 2-monad on $\K$ such that $TU$ is codotted.
	Then there is a (unique) lax algebra structure on $U$ which is codotted-marked-lax terminal in $\LaxAlg_{\lax, t\pseudo}(T)$.
\end{theorem}

Before concluding with the proof of this theorem, we immediately observe that:

\begin{corollary}
\label{cor:lift.of.enh.quasi-terminal.to.psalg}
	When $T$ is fully marked, then the lax algebra structure on $U$ is pseudo and it is codotted-marked-lax terminal in $\PsAlg_{\lax, t\pseudo}(T)$.
\end{corollary}

Note we cannot strictify further unless we also strictify (3) in the definition of marked-lax terminal object, but that does not hold in the cases we are interested.

\begin{proof}[Proof of~\cref{th:lift.of.enh.quasi-terminal.to.laxalg}]
	We begin by constructing the algebra structure; we will then prove that it is codotted-marked-lax terminal. You may find the definition of a lax algebra at the begining of Section 1 in \cite{lack-2002-codescent}. We will use the characterization of marked-lax terminal object from \cref{lem:marked-lax-defn}.

	The carrier of the algebra structure is ${\class (TU) : TU \to U}$, which we note is tight by the assumption that $TU$ was codotted, and is marked by condition (4). We construct the multiplicator and unitor of $\class U$ from the following diagrams, which are guaranteed by condition (2) of $U$'s universal property:
	\begin{equation}
		\label{eq:U-alg-struct}
		\begin{tikzcd}[ampersand replacement=\&]
			{T^2 U} \&\& TU \&\& U \&\& TU \\
			\\
			TU \&\& U \&\&\&\& U
			\arrow["{T(\class(TU))}"{inner sep=.8ex}, "/"{marking}, from=1-1, to=1-3]
			\arrow["m"', from=1-1, to=3-1]
			\arrow[""{name=0, anchor=center, inner sep=0}, "{\class(T^2U)}"{description}, from=1-1, to=3-3]
			\arrow[""{name=0p, anchor=center, inner sep=0}, phantom, from=1-1, to=3-3, start anchor=center, end anchor=center]
			\arrow[""{name=0p, anchor=center, inner sep=0}, phantom, from=1-1, to=3-3, start anchor=center, end anchor=center]
			\arrow[""{name=1, anchor=center, inner sep=0}, "{\class (TU)}", from=1-3, to=3-3]
			\arrow[""{name=1p, anchor=center, inner sep=0}, phantom, from=1-3, to=3-3, start anchor=center, end anchor=center]
			\arrow["i", from=1-5, to=1-7]
			\arrow[""{name=2, anchor=center, inner sep=0}, curve={height=18pt}, equals, from=1-5, to=3-7]
			\arrow[""{name=2p, anchor=center, inner sep=0}, phantom, from=1-5, to=3-7, start anchor=center, end anchor=center, curve={height=18pt}]
			\arrow[""{name=3, anchor=center, inner sep=0}, "{\class(U)}"{description, pos=0.3}, curve={height=-12pt}, from=1-5, to=3-7]
			\arrow[""{name=3p, anchor=center, inner sep=0}, phantom, from=1-5, to=3-7, start anchor=center, end anchor=center, curve={height=-12pt}]
			\arrow[""{name=3p, anchor=center, inner sep=0}, phantom, from=1-5, to=3-7, start anchor=center, end anchor=center, curve={height=-12pt}]
			\arrow["{\class(TU)}", from=1-7, to=3-7]
			\arrow[""{name=4, anchor=center, inner sep=0}, "{\class (TU)}"'{inner sep=.8ex}, "/"{marking}, from=3-1, to=3-3]
			\arrow[""{name=4p, anchor=center, inner sep=0}, phantom, from=3-1, to=3-3, start anchor=center, end anchor=center]
			\arrow[shift right=4, between={0.2}{0.8}, Rightarrow, from=0p, to=4p]
			\arrow["\sim", shift left=3, between={0.2}{0.8}, Rightarrow, from=0p, to=1p]
			\arrow[between={0.2}{1}, Rightarrow, from=3p, to=1-7]
			\arrow["\sim"', between={0.2}{0.8}, Rightarrow, from=3p, to=2p]
		\end{tikzcd}
	\end{equation}
	By condition (3) and because $T$ preserves markings, the 2-cells labeled with $\sim$ above are isomorphisms.
	They may therefore be inverted, and this gives us the definition of our structure.
	Note that if $i$ and $m$ are marked as well, as is the case for a fully marked 2-monad, then the other 2-cells are also invertible, making the algebra pseudo.

	There are three axioms that this structure is meant to satisfy, which may be found displayed on page 225 of \cite{lack-2002-codescent}. All three are instances of \cref{lem:quasi.enhanced.etc}(B) when viewed from the right perspective.

	The universal property of $U$ as a lax algebra follows similarly. Let $\alpha : TA \to A$ be another lax algebra; to prove \cref{lem:marked-lax-defn}(1), we will extend $\class A : A \to U$ to a lax morphism of lax algebras. We form the laxator as follows:
	\begin{equation}
		\label{eq:laxators.formal}
		\begin{tikzcd}
			TA && TU \\
			\\
			A && U
			\arrow["{T\class A}"{inner sep=.8ex}, "/"{marking}, from=1-1, to=1-3]
			\arrow["\alpha"', from=1-1, to=3-1]
			\arrow[""{name=0, anchor=center, inner sep=0}, "{\class (TA)}"{description}, from=1-1, to=3-3]
			\arrow[""{name=1, anchor=center, inner sep=0}, "{\class (TU)}", from=1-3, to=3-3]
			\arrow[""{name=2, anchor=center, inner sep=0}, "{\class A}"'{inner sep=.8ex}, "/"{marking}, from=3-1, to=3-3]
			\arrow["\sim", shift left=4, between={0.2}{0.8}, Rightarrow, from=0, to=1]
			\arrow[shift right=5, between={0.2}{0.8}, Rightarrow, from=0, to=2]
		\end{tikzcd}
	\end{equation}
	As with the lax algebra structure of $U$, here we use that $T$ preserves marked cells and that $\class A$ is marked; therefore, $T\class A$ is marked, and so the induced 2-cell in the top right is an isomorphism. That this is indeed a lax morphism follows quickly by applying \cref{lem:quasi.enhanced.etc} to both required equations (which may be found on page 226 of \cite{lack-2002-codescent}).

	For condition (2), we need to show that for any lax algebra morphism $f : A \to B$, there is a unique lax algebra transformation $k : \class A \Rightarrow \class B \circ f$. Of course, we have assumed that there is a unique 2-cell of that signature; it therefore remains to show that it must be a lax algebra morphism, which means proving the equality below:
	\begin{equation}
		\begin{tikzcd}[sep=scriptsize]
			TA && TU && TA && TU \\
			& TB && {=} \\
			A && U && A && U \\
			& B &&&& B
			\arrow[""{name=0, anchor=center, inner sep=0}, "{T(\class A)}", from=1-1, to=1-3]
			\arrow[""{name=1, anchor=center, inner sep=0}, from=1-1, to=2-2]
			\arrow["\alpha"', from=1-1, to=3-1]
			\arrow["{\class (TU)}", from=1-3, to=3-3]
			\arrow[""{name=2, anchor=center, inner sep=0}, "{T(\class A)}", from=1-5, to=1-7]
			\arrow["\alpha"', from=1-5, to=3-5]
			\arrow["{\class(TU)}", from=1-7, to=3-7]
			\arrow[""{name=3, anchor=center, inner sep=0}, from=2-2, to=1-3]
			\arrow[from=2-2, to=4-2]
			\arrow[""{name=4, anchor=center, inner sep=0}, "f"', from=3-1, to=4-2]
			\arrow[""{name=5, anchor=center, inner sep=0}, "{\class A}", from=3-5, to=3-7]
			\arrow["f"', from=3-5, to=4-6]
			\arrow[""{name=6, anchor=center, inner sep=0}, "{\class B}"', from=4-2, to=3-3]
			\arrow["{\class B}"', from=4-6, to=3-7]
			\arrow[between={0.2}{1}, Rightarrow, from=0, to=2-2]
			\arrow[between={0.2}{0.8}, Rightarrow, from=1, to=4]
			\arrow[between={0.2}{0.8}, Rightarrow, from=2, to=5]
			\arrow[between={0.2}{0.8}, Rightarrow, from=3, to=6]
			\arrow[between={0.2}{1}, Rightarrow, from=5, to=4-6]
		\end{tikzcd}
	\end{equation}

	This is again an instance of \cref{lem:quasi.enhanced.etc}. We may see both of these diagrams as 2-cells $(\class U \circ T\class A) \twoto \class B \circ (f \circ \alpha)$. Since $\class U \circ T\class A$ and $\class B$ are marked, by (2) of \cref{lem:quasi.enhanced.etc}, the above diagrams must be equal.

	Condition (3) follows immediately from the fact that it holds in $\K$. As for condition (5), if $\alpha : TA \to A$ is a codotted lax algebra, then by assumption $A$ was codotted in $\K$ and $\alpha$ was marked; therefore, $\class A : A \to U$ is tight in $\K$ and pseudo as an algebra morphism.

	Finally, to prove condition (4) it will suffice to show that $!_U : \class U \cong \id_U$ is a 2-cell of lax algebras. This means proving that
	\begin{equation}
		\begin{tikzcd}
			TU & TU && TU && TU \\
			&& {=} \\
			U & U && U && U
			\arrow[""{name=0, anchor=center, inner sep=0}, "{T\class U}"{inner sep=.8ex}, "/"{marking}, curve={height=-12pt}, from=1-1, to=1-2]
			\arrow[""{name=1, anchor=center, inner sep=0}, curve={height=12pt}, equals, from=1-1, to=1-2]
			\arrow["{\class TU}"'{inner sep=.8ex}, "/"{marking}, from=1-1, to=3-1]
			\arrow["{\class TU}", from=1-2, to=3-2]
			\arrow["{T\class U}"{inner sep=.8ex}, "/"{marking}, curve={height=-12pt}, from=1-4, to=1-6]
			\arrow["{\class TU}"'{inner sep=.8ex}, "/"{marking}, from=1-4, to=3-4]
			\arrow[""{name=2, anchor=center, inner sep=0}, "{\class TU}"{description}, from=1-4, to=3-6]
			\arrow["{\class TU}"{inner sep=.8ex}, "/"{marking}, from=1-6, to=3-6]
			\arrow[curve={height=12pt}, equals, from=3-1, to=3-2]
			\arrow[""{name=3, anchor=center, inner sep=0}, curve={height=12pt}, equals, from=3-4, to=3-6]
			\arrow[""{name=4, anchor=center, inner sep=0}, curve={height=-12pt}, from=3-4, to=3-6]
			\arrow["{T!_U}", between={0.2}{0.8}, Rightarrow, from=0, to=1]
			\arrow["{\class T\class U}", between={0.2}{1}, Rightarrow, from=2, to=1-6]
			\arrow["{\class(\class TU)}"', between={0.2}{0.8}, Rightarrow, from=2, to=4]
			\arrow["{!_U}", between={0.2}{0.8}, Rightarrow, from=4, to=3]
		\end{tikzcd}
	\end{equation}
	Since $!_U \circ \class (\class TU)$ is an identity (as in the proof of \cref{lem:marked-lax-defn}), it suffices to show that $\class TU (T!_U) = \class (T\class U)^{-1}$, or that the following square is an identity:
	\begin{equation}
		\begin{tikzcd}
			TU && U \\
			TU & TU & U
			\arrow[""{name=0, anchor=center, inner sep=0}, "{\class TU}", from=1-1, to=1-3]
			\arrow[equals, from=1-1, to=2-1]
			\arrow[equals, from=1-3, to=2-3]
			\arrow[""{name=1, anchor=center, inner sep=0}, curve={height=-12pt}, from=2-1, to=2-2]
			\arrow[""{name=2, anchor=center, inner sep=0}, curve={height=12pt}, equals, from=2-1, to=2-2]
			\arrow["\class TU"', from=2-2, to=2-3]
			\arrow["{\class (T\class U)}", between={0.2}{1}, Rightarrow, from=0, to=2-2]
			\arrow["{T!_U}", between={0.2}{0.8}, Rightarrow, from=1, to=2]
		\end{tikzcd}
	\end{equation}
	But this follows by the uniqueness of condition (2) in $\K$, since it is a map $\class TU \Rightarrow \class TU \circ \id$.
\end{proof}

\subsection{Opfibrantly cartesian 2-monads}
We define here 2-monads we call \emph{(fully) opfibrantly cartesian} (\cref{def:opfib-cart}) so that they lift to $\stdoarrow{\K}$ as $T^{\discdown_s}$.
We then end up in the hypotheses of \cref{th:lift.of.enh.quasi-terminal.to.laxalg}, and we are thus able to lift the enhanced 2-classifier of $\K$, \emph{qua} marked-lax terminal object in $\stdoarrow{\K}$, to $\LaxAlg_{\lax, t\pseudo}(\stdoarrow{T})$.
It then suffices to prove that $\LaxAlg_{\lax, t\pseudo}(\stdoarrow{T}) \cong \stdoarrow{\LaxAlg_{\lax, t\pseudo}(T)}$, that is, that (lax) algebras of $\stdoarrow{T}$ are equivalent to small discrete opfibrations in $\LaxAlg_{\lax, t\pseudo}(T)$, and we do so in \cref{prop:stdos-in-laxalg-are-laxalg-in-stdos} by suitably defining the latter.

This yields \cref{mc-0003}, and we discuss some of its immediate corollaries.
We also readily get \cref{mc-0003-good}, which says that good enhanced 2-classifiers lift under the same hypotheses.

The elementary insight behind the theorem we are about to prove is that any algebra structure $\omega:T\Omega \to \Omega$ must correspond to a small discrete opfibration over $T\Omega$, and the most na\"ive guess we can make for this opfibration is to be given by $Tu$.

We thus assume the following:

\begin{definition}
\label[definition]{def:opfib-cart}
	Let $\K$ be a representable plumbus with enhanced 2-classifier $u:\Omega_\bullet \discto \Omega$.
	An enhanced 2-monad $(T,i,m)$ is \textbf{opfibrantly cartesian} when
	\begin{center}
		\begin{minipage}[c]{.65\textwidth}
			\begin{enumerate}[label=\Alph*.]
				\item $Tu$ is a small discrete opfibration,
				\item $T$ preserves pullbacks of $u$,
				\item $Tu$ is perfect (\cref{mc-002Z}), i.e. $\omega$ is tight,
			\end{enumerate}
			and \textbf{fully opfibrantly cartesian} when furthermore
			\begin{enumerate}
				\item[D.] $i$ and $m$ are cartesian\footnotemark~at $u$,
			\end{enumerate}
		\end{minipage}
		\hfill
		\begin{minipage}[c]{.34\textwidth}
			\begin{equation}
			\label[diagram]{mc-0029}
				\begin{tikzcd}[ampersand replacement=\&]
					{T\Omega_\bullet} \& {\Omega_\bullet} \\
					{T\Omega} \& \Omega
					\arrow["{\omega_\bullet}", from=1-1, to=1-2]
					\arrow["Tu"', two heads, from=1-1, to=2-1]
					\arrow["\lrcorner"{anchor=center, pos=0.125}, draw=none, from=1-1, to=2-2]
					\arrow["u", two heads, from=1-2, to=2-2]
					\arrow["\omega"', dashed, from=2-1, to=2-2]
				\end{tikzcd}
			\end{equation}
		\end{minipage}
	\end{center}
	\footnotetext{That is, their naturality squares at $u$ are pullbacks.}
\end{definition}

\begin{lemma}
\label[lemma]{lemma:opfib-cart}
	$(T,i,m)$ is opfibrantly cartesian if and only if
	\begin{enumerate}[label=\Alph*'.]
		\item \(T\) preserves small discrete opfibrations,
		\item \(T\) preserves pullbacks of small discrete opfibrations,
		\item $T$ preserves perfectness.
	\end{enumerate}
	and fully opfibrantly cartesian if and only if furthermore
	\begin{enumerate}
		\item[D'.] \(i\) and \(m\) are cartesian at all small discrete opfibrations,
	\end{enumerate}
\end{lemma}
\begin{proof}
	We only need to prove that \cref{def:opfib-cart} implies these conditions.
	The arguments are simple but illustrative.
	Thus let \(\omega = \class Tu\) be the map classifying \(Tu\), as above.

	For (A') and (B'), consider a small discrete opfibration \(p:E \discto B\), we have:
	\begin{equation}
	\label[equation]{eq:class-of-Tp}
		\begin{tikzcd}[ampersand replacement=\&]
			TE \& {T\Omega_\bullet} \& {\Omega_\bullet} \\
			TB \& {T\Omega } \& \Omega
			\arrow[squiggly, from=1-1, to=1-2]
			\arrow["Tp"', two heads, from=1-1, to=2-1]
			\arrow[squiggly, from=1-2, to=1-3]
			\arrow["Tu"', two heads, from=1-2, to=2-2]
			\arrow["\lrcorner"{anchor=center, pos=0.125}, draw=none, from=1-2, to=2-3]
			\arrow["u", two heads, from=1-3, to=2-3]
			\arrow["{{T\class p}}"', squiggly, from=2-1, to=2-2]
			\arrow["{\omega }"', from=2-2, to=2-3]
		\end{tikzcd}
		\qquad  = \qquad
		\begin{tikzcd}[ampersand replacement=\&]
			TE \&\& {\Omega_\bullet} \\
			TB \& {T\Omega} \& \Omega
			\arrow[squiggly, from=1-1, to=1-3]
			\arrow["Tp"', two heads, from=1-1, to=2-1]
			\arrow["u", two heads, from=1-3, to=2-3]
			\arrow["{{T\class p}}"', squiggly, from=2-1, to=2-2]
			\arrow["{\omega}"', from=2-2, to=2-3]
		\end{tikzcd}
	\end{equation}
	By pasting for pullbacks, the square above right is a pullback if and only if the left half of the square above left is a pullback, and this holds by assumption.
	Thus \(Tp\) is a small discrete opfibration classified by \(\omega (T\class p)\).

	Finally, for (C'), note that \cref{eq:class-of-Tp} shows $Tp$ is classified by a map isomorphic to $\omega(T \class p)$.
	When $p$ is perfect, $\class p$ is tight and so is $T \class p$.
	By assumption $\omega$ is tight, and thus we get $\class Tp$ is isomorphic to a tight map, and thus tight since $\K$ is repletely enhanced (Terminology-\ref{term:repl-enhanced}).

	As for (D'), this is an application of the pasting lemma for pullbacks, as illustrated below for $i$ ($m$ is analogous):
	\begin{equation}
		\begin{tikzcd}
			E & TE & {T\Omega_\bullet} \\
			B & TB & {T\Omega}
			\arrow["i", from=1-1, to=1-2]
			\arrow["p"', two heads, from=1-1, to=2-1]
			\arrow[squiggly, from=1-2, to=1-3]
			\arrow["Tp"', two heads, from=1-2, to=2-2]
			\arrow["\lrcorner"{anchor=center, pos=0.125}, draw=none, from=1-2, to=2-3]
			\arrow["Tu", two heads, from=1-3, to=2-3]
			\arrow["i"', from=2-1, to=2-2]
			\arrow["{T\class p}"', squiggly, from=2-2, to=2-3]
		\end{tikzcd}
		\quad = \quad
		\begin{tikzcd}
			E & {\Omega_\bullet} & {T\Omega_\bullet} \\
			B & {\Omega } & {T\Omega}
			\arrow[squiggly, from=1-1, to=1-2]
			\arrow["p"', two heads, from=1-1, to=2-1]
			\arrow["\lrcorner"{anchor=center, pos=0.125}, draw=none, from=1-1, to=2-2]
			\arrow["i", from=1-2, to=1-3]
			\arrow["u"', two heads, from=1-2, to=2-2]
			\arrow["\lrcorner"{anchor=center, pos=0.125}, draw=none, from=1-2, to=2-3]
			\arrow["Tu", two heads, from=1-3, to=2-3]
			\arrow["{\class p}"', squiggly, from=2-1, to=2-2]
			\arrow["i"', from=2-2, to=2-3]
		\end{tikzcd}
	\end{equation}
\end{proof}

From the proof, in particular \eqref{eq:class-of-Tp}, we evince:

\begin{corollary}
\label[corollary]{cor:class-of-Tp}
	When $T$ is opfibrantly cartesian, $\class Tp \cong \omega T\class p$.
\end{corollary}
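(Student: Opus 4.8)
The plan is to unwind the pasting computation already performed inside the proof of \cref{lemma:T-pres-stdos-and-pbs-can-be-checked-at-u}. First I would observe that clauses A and B of \cref{mc-0038} are exactly the hypotheses that $Tu$ is a small discrete opfibration and that $T$ preserves pullbacks of $u$; hence \cref{lemma:T-pres-stdos-and-pbs-can-be-checked-at-u} applies, so $T$ preserves small discrete opfibrations together with their pullbacks. In particular, for any small discrete opfibration $p:E\to B$, the map $Tp$ is again a small discrete opfibration, so it admits a classifying map $\class Tp$ (unique up to isomorphism by \cref{mc-002Y}).

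Next I would exhibit $\omega\,T\class p$ as \emph{a} classifying map of $Tp$. Take the classifying pullback square for $p$ from \cref{mc-0045} — the one with right leg $u$, left leg isomorphic (over $B$) to $p$, and bottom edge $\class p$ — and apply $T$. Since this is a pullback of $u$, clause B guarantees the $T$-image is still a pullback, now with left leg isomorphic over $TB$ to $Tp$ and bottom edge $T\class p$. Pasting this square onto \cref{mc-0029}, which displays $Tu$ as the pullback of $u$ along $\omega$, the pasting lemma for pullbacks yields a pullback rectangle whose left leg is $Tp$, whose right leg is $u$, and whose bottom edge is $\omega\,T\class p$. Thus $(\omega\,T\class p)^{*}u\cong Tp$, i.e.\ $\omega\,T\class p$ classifies $Tp$.

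Finally I would conclude by uniqueness of classifying maps: by \cref{mc-002Y}, any two classifying maps of the same small discrete opfibration are isomorphic, so $\class Tp\cong\omega\,T\class p$. There is no real obstacle in this argument; the only points that warrant a moment's care are that the square being pulled back along $\class p$ genuinely counts as ``a pullback of $u$'' so that clause B applies, and that the identification $T(\class p^{*}u)\cong TE$ used when pasting is the one obtained by applying $T$ to the chosen isomorphism $\class p^{*}u\cong p$ over $B$, so that the left leg of the pasted rectangle really is $Tp$ rather than merely isomorphic to it over $TB$.
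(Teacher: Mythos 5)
Your proof is correct and follows essentially the same route as the paper: the corollary is stated there as an immediate consequence of \cref{lemma:T-pres-stdos-and-pbs-can-be-checked-at-u}, whose proof is precisely the pasting argument you reproduce (apply $T$ to the classifying pullback of $p$, paste with \cref{mc-0029}, and invoke pasting for pullbacks plus uniqueness of classifying maps). Your added care about the identification $T(\class p^{*}u)\cong TE$ is a reasonable refinement but not a departure from the paper's argument.
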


Now note that conditions (A') and (B') above suffice to have any (fully) opfibrantly cartesian 2-monad lift to a (fully) marked (\cref{def:marked.2-monad}) enhanced 2-monad $\stdoarrow{T}$ on $\stdoarrow{\K}$, whose action is simply that of $T$ on arrows:
\begin{equation}
	\begin{tikzcd}[ampersand replacement=\&]
		E \& {E'} \\
		B \& {B'}
		\arrow["g", squiggly, from=1-1, to=1-2]
		\arrow["p"', two heads, from=1-1, to=2-1]
		\arrow["{p'}", two heads, from=1-2, to=2-2]
		\arrow["f"', squiggly, from=2-1, to=2-2]
	\end{tikzcd}
	\quad \mapsto \quad
	\begin{tikzcd}[ampersand replacement=\&]
		TE \& {E'} \\
		TB \& {TB'}
		\arrow["Tg", squiggly, from=1-1, to=1-2]
		\arrow["Tp"', two heads, from=1-1, to=2-1]
		\arrow["{Tp'}", two heads, from=1-2, to=2-2]
		\arrow["Tf"', squiggly, from=2-1, to=2-2]
	\end{tikzcd}
\end{equation}
Indeed, by (A') this is well-defined on objects and by (B') it preserves the marking on $\stdoarrow{\K}$, i.e. the pullback squares (recall \cref{const:marking.codotting.stdopfib}).
The unit and multiplication are given by their naturality squares, and in fact note also that the additional condition (D') makes $\stdoarrow{T}$ fully marked, by definition.
Finally, by (C'), $Tu$ is codotted (codotting in $\stdoarrow{\K}$ is indeed given by perfect small discrete opfibrations).
Thus an opfibrantly cartesian 2-monad satisfies the precise conditions that make \cref{th:lift.of.enh.quasi-terminal.to.laxalg} applicable to $\stdoarrow{T}$ over $\stdoarrow{\K}$, and thus we get:

\begin{proposition}
\label[proposition]{mc-0015}
	When $(T,i,m)$ is opfibrantly cartesian, the enhanced 2-classifier $u:\Omega_\bullet \discto \Omega$ becomes equipped with an essentially unique lax $\stdoarrow{T}$-algebra structure.
	In particular, this makes $u$ a marked-lax terminal object in $\LaxAlg_{\lax, t\pseudo}(\stdoarrow{T})$.
	If, furthermore, $(T, i, m)$ is fully opfibrantly cartesian, then $u$ is marked-lax terminal in $\PsAlg_{\lax, t\pseudo}(\stdoarrow{T})$.
\end{proposition}

We now want to prove that `small discrete opfibrations in $\LaxAlg_{\lax, t\pseudo}(\stdoarrow{T})$ are the same as lax $\stdoarrow{T}$-algebras'.
This involves defining a smallness structure on $\LaxAlg_{\lax, t\pseudo}(\stdoarrow{T})$.
We start by observing the following:

\begin{proposition}
	Every lax $\stdoarrow{T}$-algebra is a tight discrete opfibration in $\LaxAlg_{\lax, t\pseudo}(\tdoarrow{T})$, that is, there is a `forgetful' marked enhanced 2-functor:
	\begin{equation}
		\begin{tikzcd}[ampersand replacement=\&]
			{\LaxAlg_{\lax, t\pseudo}(\stdoarrow{T})} \& {\tdoarrow{\LaxAlg_{\lax, t\pseudo}(T)}.}
			\arrow["U", from=1-1, to=1-2]
		\end{tikzcd}
	\end{equation}
\end{proposition}
\begin{proof}
	The claim is obvious once we unpack what the left hand side looks like.
	A lax algebra for $\stdoarrow{T}$ is given by a small discrete opfibration $p:E \discto B$ in $\K$ equipped with a tight square
	\begin{equation}
		\begin{tikzcd}[ampersand replacement=\&]
			TE \& E \\
			TB \& B
			\arrow["\eta", from=1-1, to=1-2]
			\arrow["Tp"', two heads, from=1-1, to=2-1]
			\arrow["p", two heads, from=1-2, to=2-2]
			\arrow["\beta"', from=2-1, to=2-2]
		\end{tikzcd}
	\end{equation}
	and unitor and multiplicator
	\begin{equation}
		\begin{tikzcd}[ampersand replacement=\&, sep=scriptsize]
			E \&\&\& E \\
			\& TE \\
			B \&\&\& B \\
			\& TB
			\arrow["i"', from=3-1, to=4-2]
			\arrow["\beta"', from=4-2, to=3-4]
			\arrow[""{name=1, anchor=center, inner sep=0}, equals, from=3-1, to=3-4]
			\arrow["{\psunit_B}"{pos=0.3}, between={0.2}{0.8}, Rightarrow, from=1, to=4-2]
			\arrow[""{name=0, anchor=center, inner sep=0}, shift left, equals, from=1-1, to=1-4]
			\arrow["i"', from=1-1, to=2-2]
			\arrow["p"', two heads, from=1-1, to=3-1]
			\arrow["p", two heads, from=1-4, to=3-4]
			\arrow["\eta"', from=2-2, to=1-4]
			\arrow[two heads, from=2-2, to=4-2]
			\arrow["{\psunit_E}"{pos=0.3}, between={0.2}{0.8}, Rightarrow, from=0, to=2-2]
		\end{tikzcd}
		\hspace*{10ex}
		\begin{tikzcd}
			& {T^2E} && \\
			TE &&& TE \\
			& {T^2B} & E \\
			TB &&& TB \\
			&& B
			\arrow[""{name=0, anchor=center, inner sep=0}, "{T\eta}"', from=1-2, to=2-1]
			\arrow["{T^2p}"{description}, two heads, from=1-2, to=3-2]
			\arrow["m", from=1-2, to=2-4]
			\arrow["\eta"{description, pos=0.7}, from=2-1, to=3-3]
			\arrow["Tp"{description}, two heads, from=2-1, to=4-1]
			\arrow[""{name=1, anchor=center, inner sep=0}, "\eta", from=2-4, to=3-3]
			\arrow["Tp", two heads, from=2-4, to=4-4]
			\arrow[""{name=2, anchor=center, inner sep=0}, "{T\eta}"', from=3-2, to=4-1]
			\arrow["m"{description, pos=0.3}, from=3-2, to=4-4]
			\arrow["\beta"', from=4-1, to=5-3]
			\arrow[""{name=3, anchor=center, inner sep=0}, "\beta", from=4-4, to=5-3]
			\arrow["{\psmult_E}", between={0.2}{0.8}, Rightarrow, from=0, to=1]
			\arrow["{\psmult_B}"', between={0.2}{0.8}, Rightarrow, from=2, to=3]
			\arrow["p"{description}, two heads, from=3-3, to=5-3]
		\end{tikzcd}
	\end{equation}
	satisfying the equations from \cref{def:lax-alg}.

	Obviously $p$ is a tight discrete opfibration in $\K$ and a strict $T$-morphism of lax $T$-algebras.
	Then the only non-trivial step remaining in the proof has been done in \cref{prop:tdos-in-alg-lax}---proving that $p$ is in fact a tight discrete opfibration in \(\LaxAlg_{\lax, t\pseudo}(T)\).

	We leave the reader to check that this carries over to 1-cells and 2-cells, with no surprises: every lax/pseudo $\stdoarrow{T}$-morphism structure on a square induces lax/pseudo $T$-morphism structures on its sides.

	Finally, we check the various decorations on the 2-categories correspond.
	A tight map in $\LaxAlg_{\lax, t\pseudo}(\stdoarrow{T})$ is given by a square with tight sides and invertible laxators, which amounts precisely to a tight in $\stdoarrow{\LaxAlg_{\lax, t\pseudo}(T)}$.
	A marked map in $\LaxAlg_{\lax, t\pseudo}(\stdoarrow{T})$ is one whose carrier is a pullback square, and so is in $\LaxAlg_{\lax, t\pseudo}(\stdoarrow{T})$.
	We are done.
\end{proof}

Note that $U$ is locally an isomorphism.
We can therefore define the smallness structure on \(\LaxAlg_{\lax, t\pseudo}(T)\) as the image of $U$, that is, the `$T$-strict small discrete opfibration in $\K$'.

\begin{proposition}
\label[proposition]{prop:lax-alg-plumbus}
	When \(\K\) is a plumbus (\cref{def:plumbus}) and \((T,i,m)\) an enhanced 2-monad, the enhanced 2-category \(\LaxAlg_{\lax, t\pseudo}(T)\) is a plumbus too, with small discrete opfibrations the strict $T$-morphisms whose underlying map is a small discrete opfibration in $\K$.
\end{proposition}
\begin{proof}
\label[proof]{mc-002F}
	The definition of the enhanced 2-category \(\LaxAlg_{\lax, t\pseudo}(T)\) has been given in \cref{def:lax-alg}---note that pseudo-$T$-maps are a replete choice of tights.
	The tight terminal object is inherited trivially, and left-tight pullbacks of tight discrete opfibrations exist by \cref{prop:pbs-of-tdos-in-alg-lax} and are constructed out of pullbacks in $\K$: we may therefore reuse the same choice of pullbacks in $\LaxAlg_{\lax, t\pseudo}(T)$ that we made for $\K$.
\end{proof}

The following then holds by definition:

\begin{proposition}
\label{prop:stdos-in-laxalg-are-laxalg-in-stdos}
	Let $(T,i,m)$ be opfibrantly cartesian on a plumbus $\K$, then
	\begin{equation}
		\LaxAlg_{\lax, t\pseudo}(\stdoarrow{T}) \cong \stdoarrow{\LaxAlg_{\lax, t\pseudo}(T)}
	\end{equation}
	as marked enhanced 2-categories (with respect to the decorations defined in \cref{sec:formal-2-class}).
\end{proposition}

\begin{remark}
	\cref{prop:lax-alg-plumbus} and \cref{prop:stdos-in-laxalg-are-laxalg-in-stdos} evidently work for other combinations of laxity of the algebra and the morphisms.
\end{remark}

Finally, this isomorphism allows to transport the marked-lax terminal object $u:\Omega_\bullet \discto \Omega$ of $\LaxAlg_{\lax, t\pseudo}(\stdoarrow{T})$ to $\stdoarrow{\LaxAlg_{\lax, t\pseudo}(T)}$, therefore (by \cref{mc-0045}) making $\stdoarrow{\LaxAlg_{\lax, t\pseudo}(T)}$ a representable plumbus.
We have thus proved our main theorem.

\begin{theorem}
\label[theorem]{mc-0003}
	Let $(T,i,m)$ be an opfibrantly cartesian 2-monad on a representable plumbus $\K$.
	Then the plumbus \(\LaxAlg_{\lax, t\pseudo}(T)\) is representable too, with the same 2-classifier \(u : \Omega_\bullet \discto \Omega \), where $\Omega$ and $\Omega_\bullet$ are equipped with the lax $T$-algebra structure described in \cref{mc-0015} and $u$ is strict.
\end{theorem}
\begin{corollary}
\label[corollary]{mc-0003-pseudo}
	If $T$ is fully opfibrantly cartesian, \(\PsAlg_{\lax, t\pseudo}(T)\) is a representable plumbus.
\end{corollary}

It is useful to unpack the construction we did at the formal level, specifically in \eqref{eq:U-alg-struct}, to actually see how $u$ performs its job in $\LaxAlg_{\lax, t\pseudo}(T)$.

First, let us mention that the structure map of the algebra structure on $\Omega$ and $\Omega_\bullet$---as anticipated in \cref{def:opfib-cart}---is precisely the pullback square classifying $Tu$.

To give an intuition of how this algebra structure `feels like', we anticipate here an example (many others can be found in \cref{sec:examples}).

\begin{example}[{Monoidal categories}]\label[example]{djm-00KC}
	Monoidal categories are the pseudoalgebras of a 2-monad $T$ on $\Cat$, specifically the `free monoid' 2-monad, i.e. the free strict monoidal category 2-monad (Example 4.1.15, \cite{leinsterHigherOperadsHigher2004}).
	$\Cat$ is chordate, and we consider it with the representable smallness structure described in \cref{ex:2-class-cat}.
	It is straightforward to see that $T$ is fully opfibrantly cartesian---indeed $Tu$ is necessarily perfect since all classifying morphisms are tight, $T$ has rank and thus does not change smallness, and $T$ is cartesian (see \emph{ibid.}).

	Now we have that the following square is a pullback, by definition:
	\begin{equation}
		\begin{tikzcd}
			{T(1  / \Set)} & {1  / \Set} \\
			{T\Set} & \Set
			\arrow["{{1  / \omega }}", from=1-1, to=1-2]
			\arrow[from=1-1, to=2-1]
			\arrow["\lrcorner"{anchor=center, pos=0.125}, draw=none, from=1-1, to=2-2]
			\arrow["u", from=1-2, to=2-2]
			\arrow["{\omega }"', dashed, from=2-1, to=2-2]
		\end{tikzcd}
	\end{equation}
	The fiber of \(Tu\) over an \(X \in \Set^n\) (where $n \in \N$) in \(T\Set\) comprises of the possible families $(X, x) : (1/\Set)^n$, so that \(x = (x_i \in X_i)_{0 \leq i \leq n}\).
	Hence we see that \(\omega(X) = \prod_{i \in I} X_i\) computes the cartesian product of sets.

	In any case, \cref{mc-0003} now confirms to us that the category of sets, together with its monoidal structure of cartesian product, classifies strict discrete opfibrations of monoidal categories \(p : E \discto B\) via lax monoidal functors \(\class p : C \to \Set\).
	We have recovered the discrete version of the \emph{monoidal Grothendieck construction} \cite{moellerMonoidalGrothendieckConstruction2020}.
	We refer the reader to \cref{djm-00KD} for generalizations and expansion of this example.
\end{example}

In general, for $T$ algebraic, we see that the algebra structure on $\Omega$ is always going to compute a sort of `limit', whose shape is given by the operations of $T$.

\begin{remark}[{Decategorification}]\label[remark]{mc-002W}
	\cref{mc-0003} should decategorify to a theorem about 1-topoi.
	For instance, it is easy to see that a monad for which \(Tu\) is a mono (where \(u\) is the subobject classifier) induces a \(T\)-algebra structure on \(\Omega \), namely the one given by
	\begin{equation}
		t(\varphi _1, \ldots , \varphi _n)\ \mapsto\ \bigwedge _i \varphi _i.
	\end{equation}
	A result in this direction, involving classifiers for a restricted class of subobjects of \(T\)-algebras, appears in \citep{aristoteMonotoneWeakDistributive2024}.
\end{remark}

\subsubsection{Well-representability}
Finally, suppose now $\K$ is well-representable.
By the equivalence of classification explained in \cref{diag:good-classifying-commas}, we have a comma square:
\begin{equation}
\label[diagram]{mc-0029-good}
	\begin{tikzcd}[ampersand replacement=\&]
		{T\Omega_\bullet} \& 1 \\
		{T\Omega} \& \Omega
		\arrow[from=1-1, to=1-2]
		\arrow["Tu"', two heads, from=1-1, to=2-1]
		\arrow["\lrcorner"{anchor=center, pos=0.125}, draw=none, from=1-1, to=2-2]
		\arrow[between={0.2}{0.8}, Rightarrow, from=1-2, to=2-1]
		\arrow["\tau", from=1-2, to=2-2]
		\arrow["\omega"', from=2-1, to=2-2]
	\end{tikzcd}
\end{equation}
Speaking informally for the moment, note that the above shows again how the algebra structure \(\omega : T\Omega \to \Omega \) must be given by a sort of limit construction: the `elements' of \(\omega (t(\vec {X}))\) (i.e. maps \(\tau \twoto \omega (t(\vec {X}))\)) for a \(T\)-term \(t(\vec {X})\) are given by terms \(t(\vec {x})\) of `elements' \(x_i : \tau \twoto X_i\).

Moreover, we have:

\begin{proposition}
\label[proposition]{proposition:tau-is-pseudo}
	\(\tau \) is tight in \(\LaxAlg_{\lax, t\pseudo}(T)\) and $(\Omega, \omega)$ is cartesian at $\tau$.
\end{proposition}
\begin{proof}
	We know $\tau = \class(\id_1)$ is tight in $\K$ by assumption, we only need to prove it a pseudo-$T$-morphism.
	Since $\tau$ is internally terminal (\cref{id-1-is-terminal}), there is a unique colax structure $\omega T\tau \twoto \tau!$ and it is automatically coherent.
	Moreover, since $\tau$ is right adjoint to $\Omega \to 1$, and the latter is a pseudo $T$-morphism (in fact, strict), then by doctrinal adjunction \citep{kellyDoctrinalAdjunction1974} $\tau$ must in fact be pseudo as well.
\end{proof}

Thus we conclude that \cref{mc-0003} works as-is for well-representable plumbuses:

\begin{theorem}
\label[theorem]{mc-0003-good}
	Let $(T,i,m)$ be an opfibrantly cartesian 2-monad on a well-representable plumbus $\K$.
	Then the plumbus \(\LaxAlg_{\lax, t\pseudo}(T)\) is well-representable too, with the same good 2-classifier \(\tau : 1 \to \Omega \), where $\Omega$ is equipped with its unique lax $T$-algebra structure and $\tau$ with its unique pseudo-$T$-map structure.
	Furthermore, when $T$ is fully opfibrantly cartesian, the plumbus \(\PsAlg_{\lax, t\pseudo}(T)\) is analogously well-representable.
\end{theorem}

\subsection{When are representables pseudo?}
\label[section]{sec:cartesianness}

Let $p:(E,\eta) \discto (B,\beta)$ be a small discrete opfibration of (lax) $T$-algebras: what does the lax structure on $\class p : B \looseto \Omega$ look like?
The laxator fits in the diagram (note $\overline{\class_\bullet p}$ is the lift of $\overline{\class p}$ along $u$):%
\begin{equation}
	\begin{tikzcd}
		& TE && \\
		E &&& {T\Omega_\bullet} \\
		& TB & {\Omega_\bullet} \\
		B &&& {T\Omega} \\
		&& \Omega
		\arrow["\eta"', from=1-2, to=2-1]
		\arrow[""{name=0, anchor=center, inner sep=0}, "{T\class_\bullet p}", squiggly, from=1-2, to=2-4]
		\arrow["Tp"{description}, two heads, from=1-2, to=3-2]
		\arrow[""{name=1, anchor=center, inner sep=0}, "{\class_\bullet p}"{description, pos=0.7}, squiggly, from=2-1, to=3-3]
		\arrow["p"', two heads, from=2-1, to=4-1]
		\arrow["{\omega_\bullet}", from=2-4, to=3-3]
		\arrow["Tu", two heads, from=2-4, to=4-4]
		\arrow["\beta"', from=3-2, to=4-1]
		\arrow[""{name=2, anchor=center, inner sep=0}, "{T\class p}"{description, pos=0.3}, squiggly, from=3-2, to=4-4]
		\arrow[""{name=3, anchor=center, inner sep=0}, "u"{description}, two heads, from=3-3, to=5-3]
		\arrow[""{name=4, anchor=center, inner sep=0}, "{\class p}"', squiggly, from=4-1, to=5-3]
		\arrow["\omega", from=4-4, to=5-3]
		\arrow["{\overline{\class_\bullet p}}", between={0.2}{0.8}, Rightarrow, from=0, to=1]
		\arrow["\lrcorner"{anchor=center, pos=0.125, rotate=-90}, draw=none, from=2-4, to=3]
		\arrow["{\overline{\class p}}"', between={0.2}{0.8}, Rightarrow, from=2, to=4]
	\end{tikzcd}
\end{equation}
By classification, the bottom map correspond to a map $Tp \to \beta^*p$ over $TB$, namely the following gap map:
\begin{equation}
\label{eq:cart-defect}
	\begin{tikzcd}[ampersand replacement=\&]
		TE \\
		\& {\beta ^*E} \& E \\
		\& TB \& B
		\arrow["{\delta _{p}}"{description}, dashed, from=1-1, to=2-2]
		\arrow["{\eta }", curve={height=-12pt}, from=1-1, to=2-3]
		\arrow["Tp"', curve={height=12pt}, two heads, from=1-1, to=3-2]
		\arrow[from=2-2, to=2-3]
		\arrow[two heads, from=2-2, to=3-2]
		\arrow["{\lrcorner }"{pos=0.125}, draw=none, from=2-2, to=3-3]
		\arrow["p", two heads, from=2-3, to=3-3]
		\arrow["{\beta }"', from=3-2, to=3-3]
	\end{tikzcd}
\end{equation}
Concretely, this means $\class p$ is given by maps $t(\vec{E_b}) \to E_{t(\vec b)}$ which compare a term of fibers with the fiber over a term.
For instance, for monoidal discrete opfibrations, this would be a map $E_a \times E_b \to E_{a \otimes b}$.

\begin{definition}[{Cartesian \(T\)-morphism}]\label[definition]{def:cart-defect}
	Let \(f:A \to B\) be a strict \(T\)-morphism.
	We call \textbf{\(T\)-cartesianness defect of \(f\)} the comparison map dashed in \eqref{eq:cart-defect}.
	We say \(f\) is \textbf{\(T\)-cartesian}\footnotemark~when its defect is a split equivalence (that is, when it admits an inverse $i$ and $\varepsilon : i\delta _{f} \cong \id$ such that $\delta _{f}i = \id$), and \textbf{strictly \(T\)-cartesian} when it is an isomorphism.
	\footnotetext{
		We deem the terminology `$T$-cartesian' pretty fitting for the concept, although sometimes in $\infty$-categorical literature the `cartesian maps' are usually taken to be fibrations of sorts.
		We keep the prefix `$T$-' to avoid any confusion.
	}
\end{definition}

We can now observe the following:

\begin{corollary}[{of \cref{mc-0003}}]
\label[corollary]{mc-001R}
	Let \(p:E \discto B\) be a small discrete opfibration in \(\LaxAlg_{\lax, t\pseudo}(T)\).
	Its classifying map \(\class p : B \looseto \Omega \) is pseudo if and only if \(p\) is \(T\)-cartesian.
\end{corollary}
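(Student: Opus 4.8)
The plan is to extract the statement from the construction of the laxator of \(\class p\) carried out inside the proof of \cref{mc-0003}. Recall that there, under \cref{mc-0038}, the lax structure on \(\class p\) is the \(2\)-cell
\[
	\overline{\class p} : \omega \circ T\class p \;\Rightarrow\; \class p \circ \beta \;:\; TB \longrightarrow \Omega ,
\]
and that \(\overline{\class p}\) was \emph{defined} by transporting, across the classifier equivalence \(\catfont{stdOpfib}(TB) \simeq \K(TB,\Omega)\) of \cref{mc-0045}, the \(T\)-cartesianity defect \(\delta_p : Tp \to \beta^{*}p\) of \(p\) (\cref{mc-002C}), regarded as a morphism over \(TB\); here \(Tp\) is a small discrete opfibration over \(TB\) --- by \cref{mc-0038} together with \cref{lemma:T-pres-stdos-and-pbs-can-be-checked-at-u} --- classified by \(\omega \circ T\class p\) thanks to \cref{cor:class-of-Tp}, while \(\beta^{*}p\) is the pullback of \(p\) along \(\beta\), classified by \(\class p \circ \beta\). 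Since \(\class p\) (with its canonical lax structure) is a pseudo \(T\)-morphism exactly when \(\overline{\class p}\) is invertible, and since an equivalence of categories reflects isomorphisms, I obtain at once: \(\class p\) is pseudo if and only if \(\delta_p\) is an isomorphism in \(\catfont{stdOpfib}(TB)\), i.e.\ an isomorphism over \(TB\) in \(\K\).

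It then remains to match ``\(\delta_p\) is an isomorphism over \(TB\)'' with ``\(\delta_p\) is a split equivalence in \(\K\)'', the latter being, by \cref{mc-002C}, exactly the assertion that \(p\) is \(T\)-cartesian. One direction is trivial. For the converse I would argue representably. Both \(Tp\) and \(\beta^{*}p\) are discrete opfibrations in \(\K\) (the first since \(T\) preserves small discrete opfibrations, the second by pullback-stability), so for each object \(X\) the functors \(\K(X,Tp)\) and \(\K(X,\beta^{*}p)\) are discrete opfibrations of categories and \(\K(X,\delta_p)\) is a functor between them over \(\K(X,TB)\). If \(\delta_p\) is an equivalence in \(\K\), then \(\K(X,\delta_p)\) is an equivalence of categories; but a functor over a fixed base between discrete opfibrations of categories is automatically faithful (cocartesian lifts are unique), and if it is moreover an equivalence then it is injective on each fibre (by fullness and the triviality of lifts of identities) and surjective on each fibre (since discrete opfibrations are closed under transport along the isomorphisms witnessing essential surjectivity), hence bijective on objects, hence an isomorphism over the base. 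Thus \(\K(X,\delta_p)\) is an isomorphism of categories for every \(X\), so \(\delta_p\) is an isomorphism in \(\K\), manifestly over \(TB\).

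The one point that requires care --- and the one I expect to be the real work --- is this last lemma, that a fibred equivalence of discrete opfibrations is already an isomorphism over the base; everything else is bookkeeping with the classifier correspondence already set up in the proof of \cref{mc-0003}. In particular one must check, against that proof, that the morphism over \(TB\) which \(\overline{\class p}\) names is precisely the cartesianity defect \(\delta_p\) of \cref{mc-002C}, and not merely some map isomorphic to it --- which is exactly how \(\overline{\class p}\) was produced there.
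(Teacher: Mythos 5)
Your proof follows the paper's own (one-line) argument for this corollary: by inspection of the proof of \cref{mc-0003}, the laxator \(\overline{\class p}\) is exactly the 2-cell classifying the cartesianity defect \(\delta_p\), so invertibility of one is equivalent to invertibility of the other under the fully faithful classifier correspondence. The one genuine addition is your closing representable lemma reconciling ``\(\delta_p\) is an isomorphism over \(TB\)'' with ``\(\delta_p\) is a split equivalence'' (the latter being what \cref{mc-002C} literally requires for \(T\)-cartesianity); this step is correct and worth making explicit, since the paper's phrasing (``classifies the \emph{strict} \(T\)-cartesianity defect'') would otherwise only deliver the statement for \emph{strictly} \(T\)-cartesian \(p\).
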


Clearly these considerations apply to $\PsAlg_{\lax, t\pseudo}(\stdoarrow{T})$ too.

\begin{remark}
	\cref{mc-001R} shows the isomorphism of \cref{prop:stdos-in-laxalg-are-laxalg-in-stdos} respects codotting too.
	Indeed, a codotted object in $\LaxAlg_{\lax, t\pseudo}(\stdoarrow{T})$ is a perfect small discrete opfibration whose structure map, \emph{qua} $\stdoarrow{T}$-algebra, is a pullback, that is, a $T$-cartesian small discrete opfibration in \(\LaxAlg_{\lax, t\pseudo}(T)\).
\end{remark}

\begin{remark}
\label{rmk:strict-classifying-map}
	Note that to have $\class p$ be \emph{strict} instead is much harder: it should be the case that the composite
	\begin{equation}
		\begin{tikzcd}[ampersand replacement=\&,column sep=scriptsize]
			{(T \class p)^* Tu} \& {TE} \& {\beta^*E}
			\arrow["\sim", from=1-1, to=1-2]
			\arrow["{\delta_p}", from=1-2, to=1-3]
		\end{tikzcd}
	\end{equation}
	is the identity, which can only happen for $p=u$.
\end{remark}

Inspired by \cref{mc-001R}, in this section we pause to contemplate the notion of `cartesianness'.
The notion of (strict) \(T\)-cartesian morphism, in the 1-dimensional setting, has been singled out before by Leinster who studied them under the name of \emph{pullback-homomorphisms} in Appendix B of \cite{leinsterNotionsMobiusInversion2012}.
He shows \(\mathrm{fc}\)-cartesian morphisms, where \(\mathrm{fc}\) is the free category monad on graphs of sets, are precisely the functors enjoying the unique lifting of factorizations (ULF) property.

In the following, it will be enough to assume \(T\) is simply a 2-monad (but note for most of the definitions below, strictness is not necessary) on a 2-category \(\K\) admitting the limits we use throughout.

The first thing we want to observe is that when $p$ is representable (\cref{mc-001T}) then we get a pretty interesting specialization of the concept:

\begin{definition}
\label[definition]{mc-0004}
	Let \((A,\alpha )\) be a (lax/pseudo/strict) \(T\)-algebra and \(a:X \rightsquigarrow A\) a colax \(T\)-morphism.
	We say \(\alpha \) is \textbf{(strictly) cartesian at \(a\)} when the representable discrete opfibration at \(a\) (\cref{mc-001T}) is a (strictly) \(T\)-cartesian morphism:
	\begin{equation}
		\begin{tikzcd}
			{T(a/A)} & TA \\
			{a/A} & A
			\arrow["{T\partial_1}", from=1-1, to=1-2]
			\arrow["{\overline {a} / \alpha }"', from=1-1, to=2-1]
			\arrow["\lrcorner "{anchor=center, pos=0.125}, draw=none, from=1-1, to=2-2]
			\arrow["\alpha ", from=1-2, to=2-2]
			\arrow["{\partial_1}", from=2-1, to=2-2]
		\end{tikzcd}
	\end{equation}
	This means (1) $T(a/A)$ is split-equivalent to $\alpha \times_A \partial_1$ and (2) the resulting map $\overline{a}/\alpha$ makes $a/A$ a $T$-algebra of the appropriate kind.
\end{definition}

Thus, as a corollary of \cref{mc-001R}, we have:

\begin{corollary}
\label[corollary]{mc-001U}
	The representable copresheaf \({B(b,-) : B \looseto \Omega} \) (\cref{mc-0030}) associated to a representable small discrete opfibration \(b/B \overset{\partial_1}\discto B\) is always a lax \(T\)-morphism, and it is pseudo precisely when \(B\) is \(T\)-cartesian at \(b\).
\end{corollary}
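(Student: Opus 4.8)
The plan is to obtain both halves of the statement as immediate consequences of \cref{mc-0003}, \cref{mc-001R}, and the definitions of \cref{mc-0006}. First I would recall (\cref{mc-0030}) that the representable copresheaf $B(b,-)$ is by definition the classifying map $\class(\partial_1)$ of the representable small discrete opfibration $\partial_1 : b/B \to B$, so that the corollary is just the assertion that this particular classifying map is always lax and is pseudo exactly in the cartesian case.

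For the first clause, I would point to the argument already carried out inside the proof of \cref{mc-0003}: there, using \cref{mc-000M}, we saw that every small discrete opfibration $p$ in $\Alg_{\lax}(T)$ has a classifying morphism carried by the underlying classifying map $\class p$ computed in $\K$, and that this morphism is canonically a \emph{lax} $T$-morphism, with laxator extracted from the $T$-cartesianity defect $\delta_p$. Specializing to $p = \partial_1$ yields that $B(b,-)$ is always a lax $T$-morphism. For the second clause I would invoke \cref{mc-001R} with $p = \partial_1$: it states that $\class p$ is a pseudo $T$-morphism if and only if $p$ is a $T$-cartesian morphism in the sense of \cref{mc-002C}. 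It then only remains to unwind what ``$\partial_1 : b/B \to B$ is $T$-cartesian'' means. If $b$ is regarded as a colax $T$-morphism, this is literally the content of \cref{mc-0004}, i.e. it says precisely that $B$ is $T$-cartesian at $b$; if $b$ is only an unstructured object, \cref{mc-0024} (which rests on \cref{mc-001W}) identifies the same condition with $B$ being $T$-cartesian at $b$ in the sense of \cref{mc-0022}. Either way the two conditions coincide, and the corollary follows by chaining these equivalences.

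I do not expect a serious obstacle here, since the corollary is essentially a repackaging of results already established; the one point deserving a moment's attention is simply that $b/B \to B$ genuinely is a small discrete opfibration \emph{of algebras}, so that \cref{mc-001R} is applicable to it — but this is exactly what the hypothesis of the corollary supplies, together with the creation of l-rigged commas guaranteed by \cref{mc-000M}. No new computation is required, as both the laxator and the criterion for its invertibility were already analysed in the proof of \cref{mc-0003} and in \cref{mc-001R}.
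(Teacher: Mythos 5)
Your proposal is correct and takes essentially the same route as the paper: the paper derives the corollary immediately from \cref{mc-001R} applied to $p = \partial_1$, together with the observation (stated just before the corollary) that $\partial_1 : b/B \to B$ is $T$-cartesian precisely when $B$ is $T$-cartesian at $b$, which is \cref{mc-0004} by definition. Your additional care in distinguishing the costructured and unstructured readings of $b$ via \cref{mc-0024} is a faithful unpacking of the same argument rather than a departure from it.
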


However, it seems that the notion of $T$-cartesian morphism has a natural place in 2-algebra, besides its relation to the classification of discrete opfibration.
An example we find illuminating is the following:

\begin{example}
\label[example]{mc-001E}
	Let \(T=\mathrm{Fam}_{\mathrm{f}}\) be the free finite coproduct completion 2-monad on \(\mathrm{Cat}\).
	Then a category with coproducts \(A\), i.e. a \(\mathrm{Fam}_{\mathrm{f}}\)-algebra, is cartesian at \(\id_A\) iff it is extensive.

	To prove the claim we made, observe the comma algebra \(A/\coprod  : \mathrm{Fam}_{\mathrm{f}}(A/A) \to  A/A\) is given by
	\begin{equation}
		(I \in  \Finset, (\varphi _i :x_i \to  y_i)_{i \in  I}) \ \mapsto \ \coprod _{i \in  I} \varphi _i : \coprod _{i \in  I} x_i \to  \coprod _{i \in  I} y_i
	\end{equation}
	Consider
	\begin{equation}
		\begin{tikzcd}
		{\mathrm{Fam}_{\mathrm{f}}(A/A)} \\
		& {\mathrm{Fam}_{\mathrm{f}}(A) \times _A A/A} & {\mathrm{Fam}_{\mathrm{f}}(A)} \\
		& {A/A} & A
		\arrow["{\delta _{\partial_1}}"{description}, dashed, from=1-1, to=2-2]
		\arrow[curve={height=-13pt}, from=1-1, to=2-3]
		\arrow[curve={height=13pt}, from=1-1, to=3-2]
		\arrow[from=2-2, to=2-3]
		\arrow["{A/\coprod }"', from=2-2, to=3-2]
		\arrow["\lrcorner "{anchor=center, pos=0.125}, draw=none, from=2-2, to=3-3]
		\arrow["\coprod ", from=2-3, to=3-3]
		\arrow["{\partial_1}"', from=3-2, to=3-3]
		\end{tikzcd}
	\end{equation}
	The \(\mathrm{Fam}_{\mathrm{f}}\)-cartesianness defect of \(\partial_1 : A/A \to A\) sends \((I, (\varphi _i :x_i \to y_i)_i)\) to \(((I,(y_i)_i), \coprod _i \varphi _i : \coprod _i x_i \to \coprod _i y_i)\).
	To say this is a split equivalence means saying that for all \(\psi :X \to \coprod _i y_i\) there exists a family \((\psi _i :x_i \to y_i)_i\) unique up to isomorphism such that \(\coprod _i \psi _i \cong \psi \).
	This is precisely saying that the canonical map \(\prod _i A / y_i \to A/(\coprod _i y_i)\) is an equivalence of categories, i.e. that \(A\) is extensive (\cite{carboniIntroductionExtensiveDistributive1993}).
\end{example}

\begin{example}[{Descent}]\label[example]{mc-003F}
	Generalizing the above, suppose \(T\) is a free \(\Phi \)-cocompletion 2-monad on \(\Cat\), with \(\Phi \) a class of weights.
	Then \(A\) is \(T\)-cartesian at the classifying object, i.e the map \(\partial_1 : A/A \to A\) is \(T\)-cartesian, precisely when \(A\) satisfy descent for \(\Phi \)-colimits, i.e. \(\lim _i A / a_i \simeq A / \operatorname {colim}_i a_i\).
\end{example}

\begin{example}[{Extensive monoidal category}]\label[example]{mc-001Q}
	It's easy to see the argument above for extensive categories applies more generally to \(T =\) the free symmetric monoidal category 2-monad, thus recovering the notion of \emph{extensive monoidal category} of Example 9.2 in \cite{galvez-carrilloDecompositionSpacesIncidence2018}.
	Hence extensive monoidal categories are characterized as those monoidal categories cartesian at the identity.
\end{example}

In light of these examples, we might define an \textbf{extensive \(T\)-algebra} to be one cartesian at the identity.

\begin{example}
\label[example]{mc-002I}
	In the special case \(X = 1\), being cartesian at \(a\) means that for every term \(t(b_1, \ldots , b_n)\), it is the case that
	\begin{equation}
		A(a, t(b_1, \ldots , b_n)) \cong t(A(a, b_1), \ldots , A(a, b_n)).
	\end{equation}
	For \(T\) the free symmetric monoidal category 2-monad on \(\Cat\) is easy to see this corresponds to the characteristic representability of products.%

	In general, the above is quite literally expressing the fact that the corepresentables at \(a\), which normally are just lax \(T\)-morphisms, be actually \emph{pseudo}.
	This is \cref{mc-001U}.
\end{example}

\begin{example}[{Spanish double categories}]\label[example]{mc-002O}
	A \textbf{spanish} double category (`span-like' in Definition 5.3.1.5 of \cite{myersCategoricalSystemsTheory2023}) is one where any 2-cell out of a unit and into a composite, as below left, factors uniquely as below right:

	\begin{equation}
		\begin{tikzcd}
			{\cdot } & {\cdot } \\
			& {\cdot } \\
			{\cdot } & {\cdot }
			\arrow[from=1-1, to=1-2]
			\arrow[""{name=0, anchor=center, inner sep=0}, "\bullet "{marking}, equals, from=1-1, to=3-1]
			\arrow["\bullet "{marking}, from=1-2, to=2-2]
			\arrow["\bullet "{marking}, from=2-2, to=3-2]
			\arrow[from=3-1, to=3-2]
			\arrow[shorten <=7pt, shorten >=5pt, Rightarrow, from=0, to=2-2]
			\end{tikzcd}
			\quad  = \quad
			\begin{tikzcd}
			{\cdot } & {\cdot } \\
			{\cdot } & {\cdot } \\
			{\cdot } & {\cdot }
			\arrow[from=1-1, to=1-2]
			\arrow[""{name=0, anchor=center, inner sep=0}, "\bullet "{marking}, equals, from=1-1, to=2-1]
			\arrow[""{name=1, anchor=center, inner sep=0}, "\bullet "{marking}, from=1-2, to=2-2]
			\arrow[from=2-1, to=2-2]
			\arrow[""{name=2, anchor=center, inner sep=0}, "\bullet "{marking}, equals, from=2-1, to=3-1]
			\arrow[""{name=3, anchor=center, inner sep=0}, "\bullet "{marking}, from=2-2, to=3-2]
			\arrow[from=3-1, to=3-2]
			\arrow[shorten <=10pt, shorten >=10pt, Rightarrow, from=0, to=1]
			\arrow[shorten <=10pt, shorten >=10pt, Rightarrow, from=2, to=3]
		\end{tikzcd}
	\end{equation}
	Consider now the free double category 2-monad \(T\) on \(\mathrm{Graph}(\Cat)\) (as considered in \cref{sec:intro}).
	A strict object \(a:1 \to A\) in a double category \(A\) is simply an object of \(A\), and \(A\) is spanish exactly when it is \(T\)-cartesian at all its objects.
	Indeed, an object of \(T(a/A)\) is a composite of triangles such as above right, while \(\alpha \times _A \partial_1\) is comprised of single triangles as above left.
	The cartesianness defect of \(a\) composes a series of triangles, and asking this map invertible means having unique factorizations as required.

	\cref{mc-001U} instantiated for this example yields the fact that spanish double categories have strong (Paré) representable (§2.1 in \citep{pareYonedaTheoryDouble2011}).
	This observation plays a major role in the compositionality theorem of \cite{myersCategoricalSystemsTheory2023} (Theorem 5.3.3.1 there).
\end{example}

Note the difference between \cref{mc-002I} and \cref{mc-002O}: in the former, objects $a:1 \to A$ are considered \emph{colax}, whereas in the latter they are \emph{strict}.

\section{Examples and applications}
\label[section]{sec:examples}

In this section, we will deploy our main theorem (\cref{mc-0003}) to construct a number of examples of strict discrete opfibration classifiers in 2-categories of algebras and lax morphisms.
We will begin with an example that emphasizes the difference between strict discrete opfibration classifiers amongst lax morphisms and discrete opfibration classifiers amongst strict morphisms.

\begin{example}[{Presheaves with lax transformations}]\label[example]{mc-002U}
	Consider a category \(C\), with set of objects \(i : C_0 \hookrightarrow C\).
	It is a classical observation (see e.g. Example 6.6 \cite{blackwellTwodimensionalMonadTheory1989}) that functoriality of an assignment $C_0 \to \Cat$ is an algebraic structure, for \(T\) the `free indexed category' 2-monad \(T : X \mapsto \sum _{c \in C} C(c,-) \times X(c)\) induced by the 2-adjunction \(\mathrm{lan}_i \dashv i^*\).

	Thus \(\LaxAlg_{\lax,\pseudo}(T)\) is the enhanced 2-category \([C, \Cat]_{\lax,\pseudo}^\lax\) of lax \(C\)-indexed categories, lax natural transformation between them, and modifications, with pseudo transformations as tight maps.

	Notice that \(T\) satisfies the conditions of \cref{def:opfib-cart}, since everything is checked pointwise, and moreover it is finitary over the well-representable plumbus \(\Cat^{C_0}\) (we leave to the reader the easy task of showing that the plumbus structure of \cref{ex:2-class-cat} extends to any discrete power thereof).
	So our \cref{mc-0003-good} equips \([C, \Cat]_{\lax, \pseudo}^\lax\) with a 2-classifier, namely \(1 \xrightarrow {\underline {1}} \underline {\Set}\) which picks a (fixed) singleton \(1 \in \Set\) at each \(c \in C_0\) (we are denoting with \(\underline {X}\) the indexed category constant at \(X \in \Cat\)).

	This is very different from the 2-classifier described by Mesiti in \cite{mesiti2classifiersDenseGenerators2025}, which we described in \cref{mc-002P}!
	There is no contradiction though, since Mesiti's 2-classifier is for the 2-category \([C, \Cat]_{\strict}\) of indexed categories with \emph{strict} natural transformations between them.
	Classification therein is significantly different, since, as we have seen above, classifying maps are seldom strict \(T\)-morphisms.
	Indeed, one can see that a strict \(T\)-morphism \(B \to \underline {\Set}\) necessarily classifies a trivial opfibration, i.e. ones of the form \(\pi _X:\underline {E} \times B \to B\) for a fixed category \(E\).
\end{example}

\begin{example}[{Opfamilial 2-monads}]\label[example]{djm-00KD}
	Let's expand on \cref{djm-00KC}.
	Rather famously, symmetric monoidal categories are also the \emph{strict} algebras for a 2-monad \(T\) on the 2-category \(\Cat\) of categories.
	Explicitly, if \(\Finset^{\cong }\) is the (skeletal) groupoid of (standard) finite sets and bijections and \(i : \Finset^{\cong } \hookrightarrow \Cat\) is the inclusion of the finite discrete categories, then \(T C\) may be defined as the bijective-on-objects/fully-faithful factor of the inclusion \(\mathsf{Tree}_{0,2}(C) \to \Finset^{\cong } /^{\colax} C\) from the free pointed magma of nullary-or-binary trees with leaves labeled in $C$ to the colax slice of over \(C\), sending a tree to its set of leaves.

	This is an instance of \emph{polynomial 2-monad} in the sense of \cite{weberFamilial2functorsParametric2007}, represented by the polynomial
	\begin{equation}
		\begin{tikzcd}
		\ast  & {\Finset^{\cong }_{\ast }} & {\Finset^{\cong }} & \ast
		\arrow[from=1-2, to=1-1]
		\arrow[from=1-2, to=1-3]
		\arrow[from=1-3, to=1-4]
		\end{tikzcd}
	\end{equation}
	Let \(T : \Cat / I \to  \Cat / I\) be a polynomial 2-monad represented by a polynomial
	\begin{equation}
		\begin{tikzcd}
		I & E & B & I
		\arrow["s"', from=1-2, to=1-1]
		\arrow["p", from=1-2, to=1-3]
		\arrow["t", from=1-3, to=1-4]
		\end{tikzcd}
	\end{equation}
	Any polynomial 2-monad is cartesian.
	If furthermore \(I\) is discrete and \(p\) is a split fibration and $t$ is a split opfibration, then \(T\) is opfamilial by Remark 4.4.6 of \cite{weberInternalAlgebraClassifiers2015} and therefore preserves discrete opfibrations by Theorem 6.2 of \cite{weberFamilial2functorsParametric2007}.
	Any such polynomial 2-monad is therefore fully opfibrantly cartesian.

	For any \(I\), \(\Cat / I\) has discrete opfibration classifier given by \(I \times \Set \to I\) (by Theorem 4.6 of \cite{weberYonedaStructures2toposes2007}).
	We may therefore apply \cref{mc-0003}.%
\end{example}

\emph{Familial} 2-monads are, instead, rarely opfibrantly cartesian.
Indeed, this fails already for $\mathrm{Fam}$: it is cartesian but it does not preserve discrete opfibrations (the reader can readily verify this on $\Set_\bullet \to \Set$).
Its dual, $\mathrm{Fam}(-\op)\op$, instead works as it is opfamilial.

\begin{example}[{Orthogonal factorization systems}]\label[example]{djm-00KK}
	The walking arrow \(\Delta [1]\) is, like any category, a comonoid with respect to the cartesian product.
	Therefore, the 2-functor \(X \mapsto X^{\Delta [1]} : \Cat \to \Cat\) is a 2-monad; as observed in \cite{korostenskiFactorizationSystemsEilenbergMoore1993}, its pseudoalgebras\footnote{The authors of \cite{korostenskiFactorizationSystemsEilenbergMoore1993} say `normal' but actually they sneak in a normalization in §2.2.} are orthogonal factorization systems.
	This is a fun example of a 2-monad which is opfibrantly cartesian but not cartesian (despite preserving pullbacks).

	Indeed, as a right adjoint, \((-)^{\Delta [1]}\) preserves discrete opfibrations and pullbacks.
	For a map \(p : E \to B\), a naturality square for \(i\) is a pullback just when \(p\) reflects identities and the naturality square for \(m\) is a pullback when \(p\) has unique lifting of factorizations---discrete opfibrations satisfy both these properties.

	The algebra structure on \(\Set\) classifies the discrete opfibration \(u^{\Delta [1]} : \Set_{\ast }^{\Delta [1]} \to \Set^{\Delta [1]}\) whose fibers are (isomorphically) given by the elements of the domain of an arrow in \(\Set\).
	That is, the algebra structure is \(\mathrm{dom} : \Set^{\Delta [1]} \to \Set\), or in other words the strict factorization system \((\id, \mathrm{all})\).

	Therefore, \((\Set, (\id, \mathrm{all}))\) classifies strict discrete opfibrations of strict factorization systems in the 2-category of strict factorization systems and \emph{lax} morphisms.
\end{example}

\begin{example}[{Bénabou's construction}]\label[example]{djm-00KE}
	We already observed in \cref{sec:intro} how \cref{mc-0003-pseudo} can be used to prove \(\mathbb {S}\mathrm{pan}(\Set)\) classifies tight discrete opfibrations with small fibers in the 2-category of double categories and lax double functors.

	We can use this fact to deduce a generalization of the Grothendieck construction, usually attributed to Bénabou.
	Let $f:A \to B$ be a functor in $\Cat$.
	There is an embedding $L:\Cat \to \catfont{Dbl}_{\lax}$ which makes categories double by putting all their morphisms in the loose direction.
	It is trivial to observe that the double functor $Lf : LA \to LB$ so obtained from $f$ is a small discrete opfibration of double categories.
	Therefore, we get a classification map $\class Lf : LB \to \mathbb {S}\mathrm{pan}(\Set)$, which is equivalent to a normal lax functor $B \to \mathbb{P}\mathrm{rof}$.
\end{example}

We will now exemplify the \emph{iterability} property of \cref{mc-0003}.
Indeed, once established that \(\PsAlg_{\lax,t\pseudo}(T)\) is a representable plumbus\footnotemark, given a new \(S\) opfibrantly cartesian over it---or equivalently, given \(S\) distributing over \(T\) \cite{walkerPresentationsPseudodistributiveLaws2025} and still opfibrantly cartesian \emph{qua} 2-monad over \(\PsAlg_{\lax,t\pseudo}(T)\)---we can conclude \(\PsAlg_{\lax,t\pseudo}(ST)\) is a representable plumbus too.
\footnotetext{The following applies to lax $T$-algebras too, though one must adjust the directions of the structure morphisms of the distributive law.}

Note that, given a distributive law \(\lambda : TS \Rightarrow ST\), where \(T\) and \(S\) are both opfibrantly cartesian 2-monads, then the resulting composite \(ST\) is also opfibrantly cartesian as soon as \(\lambda \) is an opfibrantly cartesian transformation (i.e. its naturality squares are pullbacks at the small discrete opfibrations).
Equivalently, for the lift \(S^T\) to still be opfibrantly cartesian on \(\PsAlg_{\lax,t\pseudo}(T)\) it suffices for \(S\) to be opfibrantly cartesian on \(\K\) and to \emph{preserve tightness for \(T\)-morphisms}, i.e. the property of being pseudo, since all the other conditions depend on pullbacks, which are computed in \(\Alg_{\lax,t\pseudo}(T)\) just as in \(\K\).

\begin{example}[{Monoidal factorization systems}]\label[example]{mc-003O}
	Consider the 2-monad \(S=(-)^{\Delta [1]}\) described in \cref{djm-00KK}, and observe that \(S\) lifts to \(\catfont{Sym}\catfont{Mon}_{\lax}(\Cat)\), which is a representable plumbus by \cref{djm-00KC}.
	Observe \(S\) is still opfibrantly cartesian, since \(S\) preserve the pseudomonoidality of the functors it is applied to.
	Therefore we conclude monoidal factorization systems also admit a 2-classifier given by \(1/\Set \xrightarrow{u} \Set\) equipped with the two, compatible, algebra structures we exhibited in \cref{djm-00KK} and \cref{djm-00KC}.
\end{example}

\begin{example}[{Duoidal categories}]\label[example]{mc-003N}
	A duoidal category is a pseudomonoid in \(\catfont{Mon}_{\lax}(\Cat)\).
	The latter 2-category is given by pseudoalgebras over \(\Cat\) so subject to our theorem (\cref{djm-00KC}).
	It is thus a representable plumbus.
	To conclude duoidal categories and lax duoidal functors form a representable plumbus too, we thus need to prove the free monoidal category 2-monad \(T\) preserves pseudomonoidality of monoidal functors, and this is again immediate to verify.
	The resulting 2-classifier is \(1/\Set \xrightarrow{u} \Set\) where $\Set$ is equipped with the duoidal structure \((\times , \times)\).
\end{example}

\begin{example}[{Symmetric monoidal structures}]\label[example]{djm-00L3}
	Suppose that \(\mathbb {T}\) is an \(\F\)-sketch (Definition 5.8 of \cite{arkorEnhanced2categoricalStructures2024}) which only marks pullback squares. This includes for example the sketch for pseudo-categories and many other structures determined by Segal conditions. In this case, we may post-compose by the free symmetric monoidal category monad \(S : \Cat \to \Cat\) to get a monad on the enhanced 2-category \(\catfont{Mod}_{\lax}(\mathbb {T}, \Cat)\) of \(\mathbb {T}\)-models and lax morphisms, since \(S\) is cartesian.

	If furthermore \(\catfont{Mod}_{\lax}(\mathbb {T}, \Cat)\) is equivalent to \(\PsAlg_{\lax,t\pseudo}(T)\) for an opfibrantly cartesian 2-monad \(T : \catfont{Mod}_{\lax}(\mathbb {T}_t, \Cat) \to \catfont{Mod}_{\lax}(\mathbb {T}_t, \Cat)\) on the tights, then \(S\) gives a \(\F\)-monad on \(\PsAlg_{\lax,t\pseudo}(T)\). This does occur for the sketch for pseudo-categories, and we expect it to be quite common.

	If we are in a position to compute a tight discrete opfibration classifier of \(\catfont{Mod}_{\lax}(\mathbb {T}_t, \Cat) = \catfont{Mod}_{s}(\mathbb{T}_t, \Cat)\)---for example, if it is merely a presheaf 2-category---then this tight discrete opfibration classifier lifts to models of $T$ and lax morphisms. For the case that $T$ is the sketch for pseudo-categories, \(\catfont{Mod}_{s}(\mathbb{T}_t, \Cat)\) is isomorphic to the 2-category of graphs of categories; we've already seen that the discrete opfibration classifier here is the graph of spans of sets.

	All told, double categories provide an example of this.
	In particular, the cartesian monoidal structure on \(\mathbb {S}\mathrm{pan}(\Set)\) classifies strict monoidal, strict double discrete opfibrations via lax monoidal, lax double functors.
\end{example}

\section*{Declarations}
\subsection*{Author Contributions}
Both authors made comparable contributions to the preparation of this article.

\subsection*{Funding}
During the preparation of this manuscript, the first author was supported by the Advanced Research + Invention Agency (ARIA) through project code MSAI-PR01-P01; while the second author was supported by the Advanced Research + Invention Agency (ARIA) through project code MSAI-PR01-P14.

\subsection*{Data availability}
No datasets were generated or analysed during the current study.

\subsection*{Conflict of interest}
The authors declare no conflicts of interest.

\bibliography{./main.bib}

\begin{appendices}

\section[Comparison with Koudenburg's theorem]{Comparison with\linebreak{}Koudenburg's theorem}
\label[appendix]{app:koudenburg}
As anticipated in the introduction, Koudenburg's Theorem 8.1 in \cite{koudenburgDoubledimensionalApproachFormal2022} generalizes \cref{mc-0003} in many ways, chiefly by abstracting away small discrete opfibrations in plumbuses to arbitrary loose arrows in an augmented virtual equipment.

In this section we compare our setting and Koudenburg's, and observe the second agrees with the first, albeit its generality makes it challenging to compare precisely.

\begin{warning}
\label[warning]{mc-003Q-warning1}
	The theory of \emph{augmented} virtual double categories and \emph{Yoneda morphisms} has been developed in \cite{koudenburgDoubledimensionalApproachFormal2022} and later expounded in \cite{koudenburgFormalCategoryTheory2024}.
	We invite the reader interested in the following to familiarize themselves with those notions there, since it would take too much space to re-introduce them here.
	We give pointers to \cite{koudenburgDoubledimensionalApproachFormal2022} where necessary.

	However, knowledge of virtual double categories and equipments (in the sense of \cite{cruttwellUnifiedFrameworkGeneralized2010}) suffices to read most of what follows.
\end{warning}

\begin{warning}
\label[warning]{mc-003Q-warning2}
	In the setting of double categories \emph{et similia} we adopt the tight/loose convention to refer to 1-cells.
	The reader is warned not to confuse this with the terminology for enhanced 2-categories we used in the above---we promise to avoid ambiguity.
\end{warning}

The definition below instantiates Definition 4.2 in \cite{koudenburgFormalCategoryTheory2024} for \(A=1\):

\begin{definition}
\label[definition]{mc-0036}
	In an augmented virtual double category with tight terminal object \(1\), a \textbf{Yoneda morphism} (for said object \(1\)) is a tight morphism \(1 \xrightarrow {\tau } \Omega \) such that for every loose arrow \(p : 1 \profto B\) there exists a unique \(\class p:B \to  \Omega \) making the conullary square below cartesian:
	\begin{equation}
		\begin{tikzcd}
		1 && B \\
		& \Omega
		\arrow[""{name=0, anchor=center, inner sep=0}, "p"{inner sep=.8ex}, "\shortmid "{marking}, no head, from=1-1, to=1-3]
		\arrow["\tau "', from=1-1, to=2-2]
		\arrow["{\exists ! \class p}", dashed, from=1-3, to=2-2]
		\arrow["{\text {cart}}", between={0.3}{0.7}, Rightarrow, from=0, to=2-2]
		\end{tikzcd}
	\end{equation}
	A Yoneda morphism is \textbf{good} when \(\tau \) is companiable.
\end{definition}

Intuitively, the cartesianness of such a square amounts to a hom-wise isomorphism \(p(b) \cong \Omega (\tau , \class p(b))\).

\begin{remark}
\label[remark]{mc-003Q-remark1}
	The conullary `square' appearing in \cref{mc-0036} above is exactly what the \emph{augmented} in \emph{augmented virtual double category} affords.
	Note every virtual double category with units is augmented by taking conullary squares to be counary squares with bottom a unit.
	Augmentation is essentially a way to deal with issues of sizes, alternative to the more common one (employed, for instance, here) of keeping track of a class of `small' (our \emph{small}) arrows and objects.
	An exemplified discussion of this approach can be found in \cite{koudenburgAugmentedVirtualDouble2020}.
\end{remark}

In this setting, the augmented virtual double category of \(w\)-weak algebras and \(w'\)-weak morphisms (for \(w,w' \in \{\)lax, colax, pseudo\(\}\)) of an augmented virtual monad \(T\), denoted \(\dblAlg^{w}_{w'}(T)\), has been thoroughly described by Koudenburg in §6 of \cite{koudenburgDoubledimensionalApproachFormal2022}, along with numerous examples including monoidal and double categories with their respective notion of profunctor.

Given any well-representable plumbus \(\K\), there is an associated augmented virtual equipment \(\Mod \left (\K\right )\) of tight two-sided discrete fibrations---also called \emph{modules}---as defined e.g. in Corollary 8.1.14 in \cite{riehlElements2022}.

However, we need to restrict it to \emph{small} discrete opfibrations somehow.

\begin{definition}
\label[definition]{mc-003Q-small-modules}
	Call \textbf{small} a discrete object \(X \in \K\) for which \(X \to 1\) is a small discrete opfibration.
	Now say \(A \xleftarrow {q} E \xrightarrow {p} B\), a module in \(\K\), is \textbf{small} when for each \(a:X \looseto  A\) with \(X\) small, the leg \(p_a\) in the simultaneous pullback below is a small discrete opfibration:
	\begin{equation}
		\begin{tikzcd}
		X && {E_a} && B \\
		A && E && B
		\arrow["a"', squiggly, from=1-1, to=2-1]
		\arrow["{q_a}"', two heads, from=1-3, to=1-1]
		\arrow["{p_a}", two heads, from=1-3, to=1-5]
		\arrow["\lrcorner "{anchor=center, pos=0.125, rotate=-90}, draw=none, from=1-3, to=2-1]
		\arrow[from=1-3, to=2-3]
		\arrow["\lrcorner "{anchor=center, pos=0.125}, draw=none, from=1-3, to=2-5]
		\arrow[equals, from=1-5, to=2-5]
		\arrow["q", two heads, from=2-3, to=2-1]
		\arrow["p"', two heads, from=2-3, to=2-5]
		\end{tikzcd}
	\end{equation}
\end{definition}

We define thus the \textbf{augmented virtual equipment of small modules} \(\sMod(\K)\) to be the evident restriction of \(\Mod \left (\K\right )\).
In particular, its tights are the morphisms of $\K$.

Clearly, \(\sMod(\K)\) contains the small discrete opfibrations over \(B\) as the loose arrows \(1 \profto B\), and moreover their pullback correspond to restriction (see \cite{riehlElements2022}, Proposition 8.2.1):

\begin{equation}
	\begin{tikzcd}
	{E_b} & E \\
	X & B
	\arrow[from=1-1, to=1-2]
	\arrow["{p_b}"', two heads, from=1-1, to=2-1]
	\arrow["\lrcorner "{anchor=center, pos=0.125}, draw=none, from=1-1, to=2-2]
	\arrow["p", two heads, from=1-2, to=2-2]
	\arrow["b"', from=2-1, to=2-2]
	\end{tikzcd}
	\quad \leftrightsquigarrow \quad
	\begin{tikzcd}
	1 & 1 \\
	X & B
	\arrow[equals, from=1-1, to=1-2]
	\arrow[""{name=0, anchor=center, inner sep=0}, "{p_b}"'{inner sep=.8ex}, "\shortmid "{marking}, from=1-1, to=2-1]
	\arrow[""{name=1, anchor=center, inner sep=0}, "p"{inner sep=.8ex}, "\shortmid "{marking}, from=1-2, to=2-2]
	\arrow["b"', from=2-1, to=2-2]
	\arrow["{\text {cart}}", between={0.3}{0.7}, Rightarrow, from=0, to=1]
	\end{tikzcd}
\end{equation}

Thus, by working in arbitrary augmented virtual double category with suitable restrictions, we generalize this situation.

Note the \emph{2-classifier} is replaced by the loose \(u=\Omega (\tau , 1)\), i.e. the companion of \(\tau \), i.e. the loose point classified by \(\id_\Omega \)---if it exists, that is when \(\tau \) is good.
In turn, \(u\) classifies loose points by cartesian squares:
\begin{equation}
	\begin{tikzcd}
	1 && \Omega  \\
	& \Omega
	\arrow[""{name=0, anchor=center, inner sep=0}, "u","\shortmid "{marking}, dashed, no head, from=1-1, to=1-3]
	\arrow["\tau "', from=1-1, to=2-2]
	\arrow[equals, from=1-3, to=2-2]
	\arrow["{\text {cart}}", between={0.3}{0.7}, Rightarrow, from=0, to=2-2]
	\end{tikzcd}
	\qquad \qquad
	\begin{tikzcd}
	1 && B \\
	1 && \Omega
	\arrow[""{name=0, anchor=center, inner sep=0}, "p"{inner sep=.8ex}, "\shortmid "{marking}, no head, from=1-1, to=1-3]
	\arrow[equals, from=1-1, to=2-1]
	\arrow["{\exists {!} \class p}", dashed, from=1-3, to=2-3]
	\arrow[""{name=1, anchor=center, inner sep=0}, "u"'{inner sep=.8ex}, "\shortmid "{marking}, from=2-1, to=2-3]
	\arrow["{\text {cart}}", between={0.3}{0.7}, Rightarrow, from=0, to=1]
	\end{tikzcd}
\end{equation}

It is hard to find an exact match between the enhancement of \(\K\) and structure on \(\sMod(\K)\).
Part of the reason is that such structure is largely irrelevant in the latter, and, in practice, replaced by companiability.
Here are two exemplar instances of this phenomenon.

First, if we have a monad \(T\) on \(\sMod(\K)\), Lemma 7.4 in \cite{koudenburgDoubledimensionalApproachFormal2022} characterizes the companiable maps in \(\dblAlg^{w}_{\lax}(T)\) as those lax \(T\)-morphisms which are (1) companiable in \(\sMod(\K)\) and (2) are in fact pseudo.
Compare this with the enhancement described in \cref{def:lax-alg}.

Secondly, the companion of a tight morphism \(f:A \to  B\) in  is given by the representable module (see \cite{riehlElements2022}, Example 8.2.3)
\begin{equation}
	\begin{tikzcd}
	& {f/B} \\
	A && B
	\arrow["{\partial_0}"', from=1-2, to=2-1]
	\arrow["{\partial_1}", from=1-2, to=2-3]
	\end{tikzcd}
\end{equation}
If $\K$ only admits l-rigged commas (\cref{mc-002L}) then $f/B$ exists only when $f$ is tight.
Thus, as long as this module is small, tightness of \(f\) implies its companiability.

If we accept the analogy between tightness and companiability, the analogue of opfibrantly cartesian 2-monads (as characterized in \cref{lemma:opfib-cart}) are simply monads \((T,i,m)\) on the augmented virtual double category $\sMod(\K)$ such that
\begin{description}
	\item[C'.] if the classifying map of \(p : 1 \profto B\) is companiable, so is the classifying map \(1 \mathrel {\overset {{!}^*}{\profto}} T1 \mathrel {\overset {Tp}{\profto}} TB\).
\end{description}
Ineed, remarkably, the properties (A') and (B') of an opfibrantly cartesian 2-monad automatically hold in this setting since (1) by definition, an augmented virtual double monad sends loose arrows to loose arrows and (2) restrictions are preserved by any functor of augmented virtual double categories (this is Proposition 6.8 in \cite{shulmanFramedBicategoriesMonoidal2008} and Corollary 8.2 in \cite{koudenburgAugmentedVirtualDouble2020}).
For $T$ to be \emph{fully} opfibrantly cartesian it suffices to add the analogue axiom to (D'):
\begin{description}
	\item[D'.] the naturality squares of its structure maps are cartesian at all loose arrows \(1 \profto  B\).
\end{description}

Finally, let us observe that cartesian \(T\)-morphisms (in the sense of \cref{def:cart-defect}) are those whose conjoint exists in the equipment of algebras and it's given by a cartesian (in the sense of equipments) square (to make sense of this we suggest to refer to the definition of loose arrow of algebras from \cite{koudenburgDoubledimensionalApproachFormal2022}), i.e.
\begin{equation}
	\begin{tikzcd}
	TA & TB \\
	A & B
	\arrow["Tf", from=1-1, to=1-2]
	\arrow["\alpha "', from=1-1, to=2-1]
	\arrow["\lrcorner "{anchor=center, pos=0.125}, draw=none, from=1-1, to=2-2]
	\arrow["\beta ", from=1-2, to=2-2]
	\arrow["f"', from=2-1, to=2-2]
	\end{tikzcd}
	\quad \leftrightsquigarrow \quad
	\begin{tikzcd}
	TB & TA \\
	B & A
	\arrow[""{name=0, anchor=center, inner sep=0}, "{TB(1,f)}"{inner sep=.8ex}, "\shortmid "{marking}, no head, from=1-1, to=1-2]
	\arrow["\beta "', from=1-1, to=2-1]
	\arrow["\alpha ", from=1-2, to=2-2]
	\arrow[""{name=1, anchor=center, inner sep=0}, "{B(1,f)}"'{inner sep=.8ex}, "\shortmid "{marking}, from=2-1, to=2-2]
	\arrow["{\text {cart}}", between={0.3}{0.7}, Rightarrow, from=0, to=1]
	\end{tikzcd}
\end{equation}

Let's now state Koudenburg's theorem (Theorem 8.1 in \cite{koudenburgDoubledimensionalApproachFormal2022}) specialized for the case \(A=1\) and to target pseudo-\(T\)-algebras rather than colax ones.

\begin{theorem}[{Koudenburg's Yoneda Lifting Theorem}]\label[theorem]{mc-003Q-koudenburg-theorem}
	Let \(T = (T, i, m)\) be a monad on an augmented virtual double category \(\mathbb {K}\).
	Consider a good Yoneda morphism \(\tau \colon 1 \to \Omega \) admitting the companion \(u = \tau _*\) in \(\mathbb {K}\).
	Letting \({!}:T1 \to  1\), assume that
	\begin{enumerate}
		\item  the conjoint \({!}^*\) exists,
		\item  the right pointwise composite (Definition 3.17 in \cite{koudenburgDoubledimensionalApproachFormal2022}) \(({!}^* \odot  T u)\) exists,
		\item  the cell \(m_p\) is pointwise left \((\tau {!})\)-exact (Definition 5.15 in \cite{koudenburgDoubledimensionalApproachFormal2022}) for each \(p \colon  1 \profto B\),
		\item  the cell \(m_{T u}\) is pointwise left \((\tau {!} \circ m_1)\)-exact, while \(i_{u}\) is pointwise left \((\tau {!})\)-exact,
		\item \(T\) preserves any unary cartesian cell with \(u\) as (loose) target,
		\item  the \(T\)-image of the cocartesian cell defining \(({!}^* \odot Tu)\) is left \((\tau {!})\)-exact.
	\end{enumerate}

	Then the morphism \(\omega  := ({!}^* \odot  T u)^\lambda  \colon  T \Omega  \to  \Omega \) dashed below extends to a pseudo-\(T\)-algebra structure \((\Omega , \omega )\) on \(\Omega \):

	\begin{equation}
		\begin{tikzcd}[column sep=scriptsize]
		1 && T1 && {T\Omega } \\
		&& {\Omega }
		\arrow["{{!}^*}"{inner sep=.8ex}, "\shortmid "{marking}, from=1-1, to=1-3]
		\arrow["\tau "', from=1-1, to=2-3]
		\arrow["{Tu}"{inner sep=.8ex}, "\shortmid "{marking}, from=1-3, to=1-5]
		\arrow["{\text {cart}}", between={0.2}{0.7}, Rightarrow, from=1-3, to=2-3]
		\arrow["\omega ", dashed, from=1-5, to=2-3]
		\end{tikzcd}
	\end{equation}

	With respect to this structure \(\tau \colon 1 \to \Omega \) admits a pseudo-\(T\)-morphism structure that makes it into a good Yoneda embedding both in \(\dblAlg^{\pseudo}_{\lax}(T)\).
\end{theorem}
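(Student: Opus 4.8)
The plan is to follow Koudenburg's argument for Theorem 8.1 of \cite{koudenburg-2022-doubledimensional}, specialising the apex object $A$ there to the tight terminal object $1$, and then to observe that the colax conclusion can be upgraded to a pseudo one under our running hypothesis that $\tau$ is good. First I would construct $\omega$ itself: by hypothesis (2) the pointwise composite $J := ({!}^* \odot Tu) : 1 \profto T\Omega$ exists, and since $\tau$ is a Yoneda morphism, $J$ is classified by a unique tight $\omega : T\Omega \to \Omega$ with $J \cong \Omega(\tau, \omega)$ — this is the displayed cartesian conullary square.

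Next I would extract the unit and multiplication constraints of the pseudo-algebra. Both $\omega \circ i_\Omega$ and $\id_\Omega$ classify loose arrows $1 \profto \Omega$; hypothesis (4) — pointwise left $(\tau{!})$-exactness of $i_{u}$ — together with the triangle identity for the companion $u = \tau_*$ identifies the loose arrow classified by $\omega \circ i_\Omega$ with $u = \Omega(\tau, \id_\Omega)$, so by the (fully faithful) classification the two tight maps are uniquely isomorphic, giving the unitor. Similarly $\omega \circ T\omega$ and $\omega \circ m_\Omega$ are computed by classifying loose arrows built from $Tu$, $T^2 u$, $m_1$ and ${!}^*$; hypotheses (3), (5), (6) — pointwise left $(\tau{!})$-exactness of $m_J$, preservation by $T$ of unary cartesian cells with $u$ as target, and left-exactness of the $T$-image of the cocartesian cell defining $J$ — are exactly what is needed to rewrite $T(J)$ as a pointwise composite again and conclude both sides classify a common loose arrow, whence uniqueness of classifying maps supplies the associator. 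The coherence axioms (pentagon, the two unit triangles) then hold automatically: each is an equation between $2$-cells of classifying maps, hence, by fully-faithfulness of $p \mapsto \class p$, corresponds to an equation between $2$-cells of loose arrows, and at that level the equations reduce to the monad axioms for $T$ and the universal properties of the (co)cartesian fillers, which hold strictly.

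The step I expect to be the main obstacle is precisely this bookkeeping: keeping track of which pointwise composites and which (co)cartesian cells are being compared, and verifying that the exactness hypotheses propagate through $T$ in the correct order so that $T$-images of pointwise composites are again recognised as pointwise composites. This is the technical heart of Koudenburg's proof, and our specialisation to $A=1$ simplifies the indexing but does not remove the need to run through it carefully.

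Finally I would treat $\tau$. A pseudo-$T$-morphism structure on $\tau$ is an invertible $2$-cell $\omega \circ T\tau \Rightarrow \tau \circ {!}$; one obtains it by noting both composites classify loose arrows $1 \profto T1$ — on the left, $J$ restricted along $(T\tau)_*$, which by hypothesis (5) and the companion identities collapses to ${!}^*$, i.e. to $\Omega(\tau, \tau\circ{!})$ — and appealing once more to uniqueness. That $\tau$ is then a \emph{good} Yoneda morphism in $\mathbb{A}\mathrm{lg}^{\pseudo}_{\lax}(T)$ has two parts. Goodness, i.e. companiability of $\tau$ as a lax $T$-morphism, follows from Lemma 7.4 of \cite{koudenburg-2022-doubledimensional}, since $\tau$ is companiable in $\mathrm{p}\mathbb{M}\mathrm{od}\left(\K\right)$ and is pseudo. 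The classification property is transported along the forgetful functor $\mathbb{A}\mathrm{lg}^{\pseudo}_{\lax}(T) \to \mathbb{K}$: a loose arrow $p : 1 \profto B$ of algebras has an underlying $p_0$ classified by $\class p_0 : B_0 \to \Omega$ in $\K$, the action cells on $p$ transport through $\omega$'s defining square and the exactness hypotheses to a lax-morphism structure on $\class p_0$, and uniqueness descends from uniqueness in $\K$ together with faithfulness of the forgetful functor on $2$-cells.
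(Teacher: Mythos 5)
The paper does not actually prove this statement: it is quoted verbatim (specialised to $A=1$ and to pseudo-algebras) from Koudenburg's Theorem~8.1 in \cite{koudenburg-2022-doubledimensional}, and the surrounding appendix only instantiates it in $\mathrm{p}\mathbb{M}\mathrm{od}(\K)$ and maps its hypotheses onto \cref{mc-0038}. So there is no in-paper proof to compare your attempt against; what you have written is a reconstruction of Koudenburg's own argument, and it should be judged as such.

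As a reconstruction, your outline of the strategy is sound: define $\omega$ as the classifying map of the pointwise composite ${!}^* \odot Tu$, obtain the unitor and associator by showing that both sides of each algebra axiom classify the same loose arrow and invoking uniqueness of classifying maps, and derive the coherence equations from fully-faithfulness of classification. This is indeed how the argument is organised. However, two caveats. First, you explicitly defer the technical heart---verifying that the six exactness hypotheses let you recognise $T$-images of pointwise composites as pointwise composites again, in the right order---and that bookkeeping \emph{is} the proof; without it the sketch is a proof by reference to \cite{koudenburg-2022-doubledimensional}, which is also what the paper does, so nothing is gained. Second, your final paragraph slips between settings: the theorem is stated for an arbitrary augmented virtual double category $\mathbb{K}$, so the companiability of $\tau$ you need is the hypothesis that $\tau$ is good in $\mathbb{K}$ (i.e.\ admits $u = \tau_*$ there), not companiability in $\mathrm{p}\mathbb{M}\mathrm{od}(\K)$, which is only the particular instance considered in \cref{mc-003Q}. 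Lemma~7.4 of \cite{koudenburg-2022-doubledimensional} is the right tool for lifting companions to $\mathbb{A}\mathrm{lg}^{\pseudo}_{\lax}(T)$, but it should be applied with the abstract hypothesis, and the transport of the classification property along the forgetful functor needs the exactness conditions (3)--(6) again to equip $\class p_0$ with its lax structure---this is the analogue of the laxator construction in the paper's \cref{mc-0003} and deserves more than one sentence.
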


Observe that, when instantiated in the augmented virtual equipment \(\sMod(\K)\) of small modules in a plumbus introduced above, the above yields a definition of \(\omega \) which is substantially the same, i.e. the map classifying \(Tu\).

As for the assumptions, finding an exhaustive mapping is challenging, but we believe there is a rough correspondence as follows.\footnote{We are indebted to Seerp Roald Koudenburg for his help in unraveling such correspondence.}

Assumption (1) is met since the augmented virtual equipment of modules admits such conjoints: \(!^*\) is the terminal module:
\begin{equation}
	\begin{tikzcd}[sep=scriptsize]
	& T1 \\
	1 && T1
	\arrow["{!}"', from=1-2, to=2-1]
	\arrow[equals, from=1-2, to=2-3]
	\end{tikzcd}
\end{equation}
In the definition of representable plumbus (\cref{mc-0045}), we ask for \(\id_1\) to be small and since \(T\) preserves smallness, the above is a valid loose arrow in \(\sMod(\K)\).
Likewise, (2) is trivially met since the required composite exists by triviality of \(!^*\),
As for (3) and (4), they assert that \(i\) and \(m\) satisfy certain pasting properties that correspond to the pasting properties afforded by our cruder hypothesis that \(i\) and \(m\) be cartesian at tight discrete opfibrations.
Then, in light of the above observation regarding restrictions in \(\sMod(\K)\), (5) implies \(T\) preserves pullbacks of the 2-classifier \(u\).
Similarly, (6) can be taken to correspond to the requirement \(T\) preserves pullbacks of the 2-classifier, which is condition (B) of \cref{def:opfib-cart}.

\end{appendices}
\end{document}